\definecolor{hypercolor}{HTML}{003399}
\numberwithin{equation}{section}
\newtheorem{thm}{Theorem}[section]
\newtheorem{lem}[thm]{Lemma}
\newtheorem{prop}[thm]{Proposition}
\newtheorem{cor}[thm]{Corollary}
\theoremstyle{definition}
\newtheorem{defn}[thm]{Definition}
\theoremstyle{remark}
\newtheorem{rmk}[thm]{Remark}
\newenvironment{enumeratemine}{\begin{enumerate}[leftmargin=20pt, label=(\alph*)]}{\end{enumerate}}
\newcommand{\e}{\varepsilon}
\newcommand{\tilZ}{\til{Z}}
\newcommand{\tilW}{\til{W}}
\newcommand{\vecmolli}{\vec{\molli}}
\newcommand{\vecmollii}{\vec{\mollii}}
\newcommand{\vecinfty}{\vec{\infty}}
\newcommand{\vectau}{\vec{\tau}}
\newcommand{\vecs}{\vec{s}}
\newcommand{\vecell}{\vec{\ell}}
\newcommand{\barn}{\bar{n}}
\newcommand{\bars}{\bar{s}}
\newcommand{\unds}{\und{s}}
\newcommand{\bart}{\bar{t}}
\newcommand{\undt}{\und{t}}
\newcommand{\comple}{\mathrm{c}}
\newcommand{\sg}{\mathcal{Q}}					
\newcommand{\sgsum}{\mathcal{R}}				
\newcommand{\sgg}{\mathcal{W}}					
\newcommand{\heatsg}{\mathcal{P}}				
\newcommand{\Jop}{\mathcal{J}}					
\newcommand{\jfn}{\mathsf{j}}
\newcommand{\weiop}{\mathcal{E}}				
\newcommand{\Aop}{\mathcal{A}}
\newcommand{\Bop}{\mathcal{B}}
\newcommand{\tilBop}{\widetilde{\mathcal{B}}}
\newcommand{\Cop}{\mathcal{C}}
\newcommand{\tilCop}{\widetilde{\mathcal{C}}}
\newcommand{\Top}{\mathcal{T}}
\newcommand{\Bsp}{\mathscr{B}}
\newcommand{\pair}{\mathrm{Pair}}
\newcommand{\intv}[1]{[\![#1]\!]}
\newcommand{\dgm}{\mathrm{Dgm}}
\newcommand{\vecalpha}{\vec{\alpha}}
\newcommand{\vecgamma}{\vec{\gamma}}
\newcommand{\veceta}{\vec{\eta}}
\newcommand{\textc}{\mathrm{c}}
\newcommand{\textr}{\mathrm{r}}
\newcommand{\yc}{y_\mathrm{c}}
\newcommand{\yr}{y_\mathrm{r}}
\newcommand{\hk}{\mathsf{p}}					
\newcommand{\PM}{M}								
\newcommand{\Zfn}{\mathsf{Z}}					
\newcommand{\Wfn}{\mathsf{W}}
\newcommand{\molli}{u} 							
\newcommand{\mollii}{v} 						
\newcommand{\mprod}{\bullet}					
\DeclareMathOperator*{\mprodd}{\bullet}	
\newcommand{\Mprod}{\circ}						
\newcommand{\heatm}{\hk^{\scriptscriptstyle\star}}	
\newcommand{\R}{\mathbb{R}}
\newcommand{\Z}{\mathbb{Z}}
\newcommand{\Csp}{C}
\newcommand{\Cbsp}{C_\mathrm{b}}
\newcommand{\Ccsp}{C_\mathrm{c}}
\newcommand{\Lsp}{L}
\newcommand{\Msp}{\mathcal{M}_+}				
\newcommand{\Rtwo}{\R^2_{\scriptscriptstyle\leq}}
\newcommand{\dense}{G}
\newcommand{\tset}{U}
\newcommand{\tsett}{V}
\newcommand{\Tset}{T}
\newcommand{\metricS}{D}						
\newcommand{\metric}{d}						
\newcommand{\metricc}{\rho}
\newcommand{\norm}[1]{\Vert #1\Vert}
\newcommand{\Norm}[1]{\big\Vert #1\big\Vert}
\newcommand{\NOrm}[1]{\Big\Vert #1\Big\Vert}
\newcommand{\normop}[1]{\Vert #1\Vert_\mathrm{op}}
\newcommand{\Normop}[1]{\big\Vert #1\big\Vert_\mathrm{op}}
\newcommand{\NOrmop}[1]{\Big\Vert #1\Big\Vert_\mathrm{op}}
\newcommand{\ip}[1]{\langle #1\rangle}
\newcommand{\Ip}[1]{\big\langle #1\big\rangle}
\newcommand{\IP}[1]{\Big\langle #1\Big\rangle}
\newcommand{\E}{\mathbf{E}}
\newcommand{\EE}{\mathbb{E}}
\renewcommand{\P}{\mathbf{P}}
\newcommand{\ind}{\mathbf{1}}			
\renewcommand{\d}{\mathrm{d}}		
\renewcommand{\bar}{\overline}
\newcommand{\und}{\underline}
\newcommand{\til}{\widetilde}
\newcommand{\mesh}{\operatorname{mesh}}
\title{Stochastic heat flow by moments}
\author{Li-Cheng Tsai}
\address[Li-Cheng Tsai]{Department of Mathematics, University of Utah}
\subjclass[2020]{%
82C27, 
60H17
}%
\keywords{Axiomatic characterization, moments, stochastic heat flow}%
\begin{document}
\begin{abstract}
The Stochastic Heat Flow (SHF) emerges as the scaling limit of directed polymers in random environments and the noise-mollified Stochastic Heat Equation (SHE), specifically at the critical dimension of two and near the critical temperature. The prior work \cite{caravenna2023critical} established the first construction of finite-dimensional distributions by demonstrating the universal (model-independent) convergence of discrete polymers.
In this work, we present a new, independent approach to the SHF.
We formulate the SHF as a continuous process and provide a set of axioms for its characterization. 
We establish both the uniqueness and existence of this process under our new formulation, with a key feature of these axioms being the matching of the first four moments.
\end{abstract}

\maketitle

\section{Introduction}
\label{s.intro}

The critical two-dimensional Stochastic Heat Flow (SHF) is an intricate yet universal object. 
It describes the scaling limit of the directed polymers in random environments. 
Introduced by \cite{huse1985pinning}, the model of directed polymers is one of the most studied instances of disordered systems; see \cite{comets17} for a review. 
At the same time, the SHF describes the scaling limit of the noise-mollified Stochastic Heat Equation (SHE). 
The SHE is an important instance of stochastic PDEs, in part due to its connection to the Kardar--Parisi--Zhang (KPZ) equation \cite{kardar1986dynamic}, which has been intensively studied in one dimension; see \cite{ferrari2010random,corwin12,quastel2012introduction,quastel2015,chandra2017,corwin2019} for reviews. 
The dimension of two offers great interest because that is exactly where the diffusive effect of the polymer or equation and the roughening effect of the noise balance. 
This phenomenon is sometimes called marginally relevant in disorder systems \cite{caravenna17} and called critical in stochastic PDEs. 
Most existing theories of singular stochastic PDEs \cite{hairer2013solving, hairer14, gonccalves2014nonlinear, gubinelli15, gubinelli2017kpz, kupiainen2016renormalization, gubinelli2018energy} require the equation of interest to be subcritical. 
The SHF is further critical in the sense that the temperature is tuned around the critical value. 
Below the critical value, the scaling limit is Gaussian \cite{caravenna17, caravenna18a, gu2020, caravenna2023quasi, nakajima2023fluctuations} but with a variance strictly larger than that of the underlying noise. 
Around the critical temperature, a rather intricate scaling limit, the SHF, arises.

Much effort has been devoted to obtaining the SHF. The first indication that a non-trivial scaling limit, that is the SHF, should exist comes from \cite{bertini98}. 
Based on \cite[Chapter~I.5]{albeverio88} and \cite{albeverio1995fundamental}, the work showed the convergence of the second moment of the mollified SHE to an explicit limit.
The explicit limit of the moment suggests an intricate limit of the process.
Convergence of higher moments are significantly more challenging, and was later obtained in \cite{caravenna18} for the third moment, and in \cite{gu2021moments} for all moments based on \cite{rajeev99,dimock04}; see \cite{chen2022two, chen2024delta, chen2024stochastic} for probabilistic studies of the moments. 
These results ensured the tightness of the model in question, but the uniqueness of the limit point and the universality of the limit remained open. 
This challenging and important open problem was solved by \cite{caravenna2023critical}. 
Through a combination of a coarse-graining argument and Lindeberg's exchange method, they showed that the finite-dimensional distributions (in time) of the discrete polymers are Cauchy and the limit is universal (model-independent).

Still, it begged the question of describing the SHF \emph{without reference to pre-limit models} and the question of constructing the \emph{full process} beyond the finite-dimensional distributions.

In this paper, we take a new, independent approach and answer those questions.
In Definition~\ref{d.shf}, we formulate the SHF as a continuous process and give a few axioms to characterize it. 
Our main result, Theorem~\ref{t.main}, proves that these axioms uniquely characterize the law of the SHF, thereby offering a description of the SHF without reference to pre-limiting models.
Next, based on this new axiomatic formulation, we establish the convergence of the mollified SHE to the SHF as a continuous process in Proposition~\ref{p.conv}, thereby giving the construction of the full process.

Stating the main result requires some notation. 
The parameter $\theta\in\R$ denotes the fine-tuning of the temperature (see \eqref{e.beta.e}) and the SHF depends on $\theta$.
The $n$th moment of the SHF are described by a semigroup of operators $\sg^{\intv{n},\theta}(t)$ on $\Lsp^2(\R^{2n})$, called the semigroup of the delta-Bose gas. 
This semigroup was constructed in \cite[Section~8]{gu2021moments} based on \cite{rajeev99,dimock04}.
We will recall the definition of $\sg^{\intv{n},\theta}(t)$ in Section~\ref{s.tools.deltabose}. 
Write $\ip{f_1,f_2}:=\int_{\R^{d}}\d x \, f_1(x)f_2(x)$ for the inner product on $\Lsp^2(\R^{d})$, write $\Ccsp(\R^{d})$ for the space of continuous and compactly supported functions on $\R^{d}$, write $\Msp(\R^{d})$ for the space of locally finite (finite on compact sets), positive Borel measures on $\R^{d}$, write $\mu f := \int_{\R^d} \mu(\d x)\, f(x)$, and equip $\Msp(\R^{d})$ with the \textbf{vague topology}, namely the coarsest topology such that $\mu\mapsto \mu f$ is continuous for every $f\in\Ccsp(\R^{d})$.
We need the product $\mu\mprod_{\molli}\nu$ similar to the one introduced in \cite{clark2024continuum}.

\begin{defn}\label{d.mproduct.}
Let $\star$ denote the convolution.
Call $\{\molli_{\ell}\}_{\ell=1}^\infty\subset\Lsp^2(\R^2)\cap\Csp(\R^2)$ an \textbf{$\boldsymbol{\Lsp^2}$-applicable mollifier} if every $\molli_\ell$ is nonnegative and if $\phi\mapsto\molli\star\phi$ defines a bounded operator on $\Lsp^2(\R^2)$ that strongly converges to the identity operator as $\ell\to\infty$.
%
For $\mu,\nu\in\Msp(\R^4)$ and nonnegative $\molli\in\Lsp^2(\R^2)\cap\Ccsp(\R^2)$, call $(\mu\mprod_{\molli}\nu)$ well-defined if, for every $f\in\Ccsp(\R^4)$, the integral
\begin{align}
	\big( \mu \mprod_{\molli} \nu \big) f
	:=
	\int_{\R^{8}} \mu(\d x_1\otimes \d x_2) \, \molli(x_2-x_3) \, \nu(\d x_3 \otimes \d x_4) \, f(x_1,x_4)\ ,
	\quad
	x_i\in\R^2
\end{align}
converges absolutely.
In this case, by the Riesz--Markov--Kakutani representation theorem, see \cite[Theorems~2.14]{rudin1986real}, $(\mu \mprodd_{\molli} \nu)\in\Msp(\R^4)$.
\end{defn}

\begin{defn}\label{d.shf}
We call $Z$ an SHF($\theta$) if
\begin{enumeratemine}
\item \label{d.shf.}
$Z=Z_{s,t}$ is an $\Msp(\R^4)$-valued, continuous process on $\Rtwo:=\{(s\leq t)\in\R^2\}$, 
\item \label{d.shf.ck}
for all $t_0< t_1< t_2$, there exists an $\Lsp^2$-applicable mollifier $\{\molli_\ell\}_{\ell}$ (which can depend on $t_0,t_1,t_2$) such that $Z_{t_0,t_1}\mprod_{\molli_\ell}Z_{t_1,t_2}-Z_{t_0,t_2}$ is well-defined and converges vaguely in probability to $0$ as $\ell\to\infty$,
\item \label{d.shf.inde}
for all $t_0<t_1<\cdots<t_k$, $Z_{t_0,t_1},\ldots,Z_{t_{k-1},t_{k}}$ are independent, and
\item \label{d.shf.mome}
the moment formula holds up to $\barn=4$: For $n=1,\ldots,\barn$, $s<t$, and $g_i,g'_i\in\Lsp^2(\R^2)$, 
\begin{align}
	\label{e.mome}
	\E\Big[\prod_{i=1}^n \, Z_{s,t} \ g_i\otimes g'_i \Big]
	=
	\IP{ \bigotimes_{i=1}^n g_i, \sg^{\intv{n},\theta}(t-s) \bigotimes_{i=1}^n g'_i }
	\ .
\end{align}
\end{enumeratemine}
\end{defn}

Here is the main result of this paper, which concerns the uniqueness of SHF($\theta$).
As said, the existence follows from Proposition~\ref{p.conv} below.
\begin{thm}\label{t.main}
If $Z$ and $\tilZ$ are SHF($\theta$), they are equal in law.
\end{thm}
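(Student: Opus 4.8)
The plan is to show that all finite-dimensional distributions of $Z_{s,t}$ — meaning the joint law of $(Z_{s_1,t_1}, \ldots, Z_{s_k,t_k})$ for any finite collection of time intervals — are determined by the axioms, since a continuous $\Msp(\R^4)$-valued process is determined in law by its finite-dimensional distributions (via the vague topology, tested against $\Ccsp$ functions). By the Chapman--Kolmogorov-type relation in \ref{d.shf.ck} together with the independence in \ref{d.shf.inde}, it suffices to control the law of $Z_{s,t}$ over a single interval together with how it splits: any finite-dimensional functional can be written, after refining the partition of the time axis, in terms of products $Z_{t_0,t_1}\mprod_{\molli_\ell} Z_{t_1,t_2}\mprod_{\molli_\ell}\cdots$ of \emph{independent} increments over a common fine partition. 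So the crux is: (i) the one-interval marginal law of $Z_{s,t}$ is uniquely determined, and (ii) the joint law over a fine partition is the product law, reassembled through $\mprod$ in the $\ell\to\infty$ limit.

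For step (i), the natural tool is the method of moments. Fix $s<t$. Axiom \ref{d.shf.mome} pins down $\E[\prod_{i=1}^n Z_{s,t}\, g_i\otimes g_i']$ for $n\le 4$, i.e. the first four joint moments of the family of real random variables $\{Z_{s,t}\, g\otimes g' : g,g'\in\Lsp^2(\R^2)\}$. The first idea — ``the law is determined because its moments are'' — cannot work directly, since only four moments are prescribed. Instead I expect the argument to be: use axioms \ref{d.shf.ck}, \ref{d.shf.inde} to write $Z_{s,t}$ as a product (through $\mprod_{\molli_\ell}$) of $N$ independent copies of the increments $Z_{s,t}^{(j)} := Z_{t_{j-1},t_j}$ over a partition $s=t_0<\cdots<t_N=t$ of mesh $\delta$; each factor has $n$th moment governed by $\sg^{\intv n,\theta}(\delta)$, which as $\delta\to 0$ degenerates to the free (no-interaction) heat semigroup plus a correction of order $\delta$ (or $1/\log$, depending on the scaling built into $\sg^{\intv n,\theta}$) — this is where the first four moments being \emph{matched} rather than arbitrary is essential. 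One then runs a central-limit / chaos-expansion bootstrap: the product of $N$ near-trivial independent factors has a limit whose law is forced by just the leading few moments of each factor, because higher moments of each factor are $o(\delta)$ and hence contribute negligibly after the product is assembled. This is the moral of ``stochastic heat flow by moments'': a Lindeberg-type replacement shows that any two processes satisfying the axioms have the same limit of the $N$-fold product, hence the same one-interval law.

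Concretely, the key steps in order: (1) reduce to finite-dimensional distributions and, via \ref{d.shf.ck}--\ref{d.shf.inde}, to the law of a single $Z_{s,t}$ together with its splitting over an arbitrarily fine partition; (2) establish a priori moment bounds for $Z_{s,t}$ of all orders $n$ (not just $n\le 4$) — here one uses positivity of the measures, the $n\le 4$ moment formula, hypercontractivity/Gaussian-correlation-type inequalities, and the multiplicative structure from \ref{d.shf.ck} to bootstrap, controlling the $n$th moment over a short interval by $\sg^{\intv n,\theta}$ of that interval which is itself controlled by the $\le 4$-order data; (3) with tightness of the laws $\{\mathrm{Law}(Z_{s,t})\}$ in hand (from the moment bounds plus the vague topology), take a subsequential weak limit of the $N$-fold independent product and identify it via a Lindeberg exchange against a reference SHF (e.g. the one coming from the mollified SHE, Proposition~\ref{p.conv}), showing the two differ by an error controlled by the fifth-and-higher moments of individual short-interval increments, which vanish as $\delta\to 0$; (4) conclude $\mathrm{Law}(Z_{s,t})=\mathrm{Law}(\tilZ_{s,t})$ for all $s<t$, and then upgrade to all finite-dimensional distributions using independence of increments and axiom \ref{d.shf.ck} to glue.

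The main obstacle I anticipate is step (2)--(3): obtaining the a priori higher-moment bounds and, with them, the Lindeberg replacement at the level of $\Msp(\R^4)$-valued random measures tested against $\Ccsp(\R^4)$ functions. The delicacy is that axiom \ref{d.shf.mome} only controls four moments, so every higher-order estimate must be squeezed out of the interplay of \ref{d.shf.ck} (which introduces the mollifier $\molli_\ell$ and requires taking $\ell\to\infty$ — an exchange of limits with $N\to\infty$ that must be justified) and \ref{d.shf.inde}, and one must ensure the $\mprod_{\molli_\ell}$ products remain well-defined and do not blow up as the partition is refined. Handling the double limit $\ell\to\infty$ then $\delta\to 0$ (or a joint limit along a diagonal), while keeping the random measures tight in the vague topology, is the technical heart of the argument.
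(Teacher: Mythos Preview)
Your overall architecture---reduce to a single interval via \ref{d.shf.ck}--\ref{d.shf.inde}, split over a fine partition, and run a Lindeberg exchange---is exactly the paper's strategy. But two of your steps are genuine missteps, and the real technical core is absent.

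First, step~(2) is unnecessary and step~(3) is miscalibrated. You propose to bootstrap \emph{all} moments of $Z_{s,t}$ from the four given ones (via ``hypercontractivity/Gaussian-correlation-type inequalities''), and then bound the Lindeberg error by ``fifth-and-higher moments''. The paper does neither. It writes $Z=\heatm+W$ with $W:=Z-\E Z$, Taylor-expands the test functional $h$ to \emph{third} order in the increment $V$ built from $W_{t_{i-1},t_i}$, and observes that the $n=0,1,2,3$ terms depend only on the first three moments of $W$ (via $\sgg^{\intv{n}}$), which match for $Z$ and $\tilZ$ by axiom~\ref{d.shf.mome}. The remainder is controlled by $\E[V^4]$, i.e.\ the \emph{fourth} moment of $W$, which is also given by axiom~\ref{d.shf.mome}. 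So all four prescribed moments are used---three to match, one to bound---and no higher moments are ever needed. Your proposed bootstrap is both unnecessary and, from the axioms alone, not obviously feasible.

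Second, comparing against ``a reference SHF coming from Proposition~\ref{p.conv}'' is circular: that proposition invokes Theorem~\ref{t.main} to identify the limit. The paper's Lindeberg exchange runs \emph{directly} between the two arbitrary SHFs $Z$ and $\tilZ$ (taken independent), swapping increments one at a time; no third reference process enters.

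What you are missing entirely is the step that makes the Lindeberg error summable. The fourth-moment remainder over the $i$th subinterval is bounded by $c\,\norm{\sgg^{\intv{4}}(t_i-t_{i-1})}_{p,1\to q,1}$, and the paper's Lemma~\ref{l.key} shows this is $\le c\,(t_i-t_{i-1})^{\frac{3}{2}(1-\frac{2}{p})}$; for $p>6$ the exponent exceeds $1$, so the sum over $i$ vanishes as the mesh tends to zero. Proving this decay---by rewriting $\sgg^{\intv{2n}}$ through the ``strictly increasing'' diagram decomposition \eqref{e.key.newform}---is the actual analytic content of the theorem, and your proposal does not anticipate it.
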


This new approach has some consequences.
First, it allows a more straightforward proof of the convergence to the SHF, avoiding the coarse-graining procedure in \cite{caravenna2023critical}.
This improvement comes from a \emph{qualitative difference} between the present work and the latter work, particularly in how Lindeberg's method is used; see the third paragraph in Section~\ref{s.intro.pf} for more explanation.
We demonstrate this point by proving the convergence of the noise-mollified SHE in Proposition~\ref{p.conv}.
Next, and more importantly, the axiomatic characterization facilitates the study of the SHF.
One example is the recent work \cite{clark2025conditional} that proves the conditional Gaussian Multiplicative Chaos (GMC) structure of the associated polymer measure.
The proof relies on the axiomatic characterization (Theorem~\ref{t.main.} to be precise), and, in turn, the result yields further properties of the SHF; see Section~\ref{s.intro.pm} for more discussions.
Another example is the recent work~\cite{gu2025stochastic}.
Based on the axiomatic formulation, the work showed that the SHF is a black noise in the sense of \cite{tsirelson1998examples,tsirelson2004nonclassical}.
A crucial (yet subtle) fact used there is that the axiomatic formulation does not have a direct reference to pre-limiting models.

The SHF is one of the few instances where the limit of a critical stochastic PDE can be obtained.
What makes critical equations difficult is that the driving noise typically becomes independent of the process in the limit. 
Let $Z^{\theta,\e}_{s,t}$ be the solution of the 2$d$ mollified SHE, which we define in Section~\ref{s.intro.conv}.
For $\e>0$, $Z^{\theta,\e}$ is measurable with respect to the spacetime white noise $\xi$, but as $\e\to 0$, $(Z^{\theta,\e},\xi)$ converges in law to $($SHF($\theta$)$,\xi)$, where SHF($\theta$) and $\xi$ are independent; see \cite[Corollary~1.2]{gu2025stochastic} and \cite{caravenna2025in}.
This property rules out pathwise descriptions of the limiting process like those for subcritical equations.
In turn, the distributional characterization Definition~\ref{d.shf} seems natural and fitting.

We mention two more instances of critical-stochastic-PDE limits: the anisotropic KPZ (aKPZ) equation and Stochastic Burgers Equation (SBE). 
Under a weak scaling that logarithmically attenuates the nonlinearity, \cite{cannizzaro2023stationary,cannizzaro20212akpz} obtained the tightness and Gaussian limit of the 2$d$ aKPZ equation. Later, \cite{cannizzaro2020logarithmic,cannizzaro2023weak} showed that, without the weak scaling, the aKPZ equation exhibits a surprising logarithmic superdiffusive behavior.
Under a similar weak scaling, \cite{cannizzaro2024gaussian} obtained the Gaussian limit for the SBE in 2$d$ and above.
Later, for the 2$d$ SBE without the weak scaling, \cite{degaspari2024superdiffusivity} obtained Tauberian-sense variance bounds, and \cite{cannizzaro2024superdiffusive} obtained the Gaussian limit of the full process.

\subsection{Moments, Lindeberg's method, proof ideas}
\label{s.intro.pf}
It may appear surprising that the moments suffice for uniquely characterizing the SHF.
Common wisdom says that the moments grow very fast and likely cause an indeterminate moment problem.
In one dimension, the $n$th moment of the SHE grows exponentially $n^3$ \cite{kardar1987replica,chen2015precise,ledoussal2016large,corwin2020kpz,das2021,ghosal2023}.
For the SHF, \cite{rajeev99} predicted a double exponential growth; see \cite[Remark~1.8]{gu2021moments}.
Hence, for fixed $s<t$, the moment problem of $Z_{s,t}$ is likely ill-posed.
The key, however, is to consider not just the one-point moments but the moments over all time intervals \emph{together}.
As explained below, doing so suffices for uniquely characterizing the SHF.
%

The first step in our proof is to use Lindeberg's method to reduce the task to bounding moments.
This step is carried out in Section~\ref{s.unique.taylor}.
Let us explain the idea.
Taking $Z$ and $\tilZ$ that satisfy Definition~\ref{d.shf}, we seek to show that $Z_{0,1}=\tilZ_{0,1}$ in law.
As we will see in Section~\ref{s.tools.ck}, Definition~\ref{d.shf}\ref{d.shf.mome} allows us to extend the Chapman--Kolmogorov equation in Definition~\ref{d.shf}\ref{d.shf.ck} to multifold, so
\begin{align}
	Z_{0,1} = Z_{0,\frac{1}{N}} \mprodd \cdots \mprodd Z_{\frac{N-1}{N},1}\ .
\end{align}
Here, we omit the dependence of $\mprodd$ on the $\Lsp^2$-applicable mollifier $\molli$ to simplify notation.
We then use Lindeberg's method to exchange $Z_{(i-1)/N,i/N}$ with $\tilZ_{(i-1)/N,i/N}$, one-by-one from $i=1$ through $N$, where $\tilZ$ is taken to be independent of $Z$, and seek to show that the error in each exchange is much smaller than $1/N$ for large $N$.
This way, sending $N\to\infty$ will give the desired result.
Let $W:=Z-\E Z$ and write
\begin{align}
	Z_{0,1} 
	= 
	\big( \E Z_{0,\frac{1}{N}} + W_{0,\frac{1}{N}} \big) 
	\mprodd \cdots \mprodd 
	\big( \E Z_{\frac{N-1}{N},1} + W_{\frac{N-1}{N},1} \big)\ .
\end{align}
Indeed, $\E Z = \E\tilZ$ by Definition~\ref{d.shf}\ref{d.shf.mome}, so the error comes from replacing $W$ with $\tilW$.
As is common in using Lindeberg's method, if we require the first $(\barn-1)$ moments of $W$ and $\tilW$ to match, we can bound the error by the $p$th absolute moments for $p>\barn-1$.

We emphasize that our way of using Lindeberg's method \emph{qualitatively differs} from that of \cite{caravenna2023critical}.
In the latter paper, Lindeberg's method is used in the coarse-graining procedure through the chaos expansion of the pre-limiting model.
The chaos expansion exhibits a singular behavior where the main contribution of the chaos ``escapes'' to ever-higher order in the scaling limit, a manifestation of the critical nature of the SHF.
In the present paper, as explained above, Lindeberg's method is used at the level of Definition~\ref{d.shf} without referencing or using the chaos expansion.

Given the first step, the task boils down to finding an absolute $p$th moment of $W_{s,t}$ that decays faster than $|t-s|$ as $t-s\to 0$.
Careful calculations show that the 2nd moment fails, which is unsurprising because that is also the case for the Brownian motion.
At the same time, the calculations suggest that any $p>2$ should suffice.
We take $\barn=p=4$ because that is the smallest even power (even because we want absolute moments) above 2, and we only have explicit access to positive-integer moments.
In Section~\ref{s.unique.key}, we bound the positive even moments of $W_{s,t}$ by utilizing the explicit expression of the delta-Bose semigroup $\sg^{\intv{n},\theta}$.

\begin{rmk}\label{r.mome}
One could seek to weaken Definition~\ref{d.shf}\ref{d.shf.mome} to be a bound on the absolute $p$th moment of $W:=Z-\E Z$ for one $p>2$ and the matching of the $n=1,2$ moments.
We do not do this because our approach to the Chapman--Kolmogorov equation (in Section~\ref{s.tools.ck}) still requires $\barn\geq 4$.
This reliance can be removed by using the martingale approach by \cite{clark2024continuum}, with the specific choice of the $\Lsp^2$-applicable mollifier (Definition~\ref{d.mproduct.}) being the heat kernel.
\end{rmk}

\subsection{Convergence}
\label{s.intro.conv}
Theorem~\ref{t.main} reduces proving the convergence to the SHF to controlling moments.
Several pre-limiting models satisfy certain analogs of Definition~\ref{d.shf}\ref{d.shf.ck}--\ref{d.shf.inde}.
For such models, proving the convergence boils down to proving tightness in a topology compatible with Definition~\ref{d.shf}\ref{d.shf.} and verifying Definition~\ref{d.shf}\ref{d.shf.mome}.
Indeed, tightness can be obtained by bounding moments, so the task comes down to bounding and showing the convergence of the moments.

We demonstrate this point by showing the convergence of the noise-mollified SHE.
To state the result, fix a nonnegative, compactly supported $\varphi\in\Csp^\infty(\R^2)$ with $\int_{\R^2}\d x\,\varphi = 1$, let $\xi(t,x)$ denote the spacetime white noise on $\R\times\R^2$, mollify the noise in space to get $\xi_\e(t,x):=\int_{\R^2} \d x' \varphi((x-x')\e^{-1})\e^{-2}\xi(t,x')$, let
\begin{align}
	\label{e.noiseMolli}
	\Phi(y)
	&:=
	\int_{\R^2} \d y'\,
	\varphi(y+y') \varphi(y')\ ,
\\	
	\label{e.beta.e}
	\beta_\e
	&:=
	\frac{2\pi}{|\log\e|}
	+
	\frac{\pi}{|\log\e|^2} \Big(
		\theta - 2\log 2 +2\gamma +2 \int_{\R^4} \d x \d x' \Phi(x) \log |x-x'| \Phi(x')
	\Big)\ ,
\end{align}
where $\gamma=0.577\ldots$ denotes the Euler–Mascheroni constant, and consider the mollified SHE
\begin{align}
	\label{e.mollifiedshe}
	\partial_t
	\Zfn^{\theta,\e}
	=
	\tfrac{1}{2}\Delta \Zfn^{\theta,\e} + \sqrt{\beta_\e}\, \xi_\e\, \Zfn^{\theta,\e}\ .
\end{align}
Let $\Zfn^{\theta,\e}_{s,t}(x',x)$ be the fundamental solution of \eqref{e.mollifiedshe}.
Namely, $\Zfn^{\theta,\e}_{s,\cdot}(x',\cdot)$ solves \eqref{e.mollifiedshe} for $(t,x)\in(s,\infty)\times\R^2$ with the initial condition $\Zfn^{\theta,\e}_{s,s}(x',\cdot)=\delta_{x'}$.
By \cite{dalang1999extending}, the solution uniquely exists and is measurable with respect to $\sigma(\xi(u,x)| (u,x)\in[s,t]\times\R^2)$.
Further, by \cite[Corollary~1.4]{chen2019comparison}, $\P[\Zfn^{\theta,\e}_{s,t}(x,x')\geq 0,\forall (s,t,x,x')\in\Rtwo\times\R^2]=1$.
Set 
\begin{align}
	Z^{\theta,\e}_{s,t} := \d x' \otimes \d x \, \Zfn^{\theta,\e}_{s,t}(x',x) \in \Msp(\R^4) \ .
\end{align}
Equip $\Csp(\Rtwo,\Msp(\R^4))$ with the uniform-on-compact topology.
More explicitly, in Section~\ref{s.tools.ck}, we give a metric $\metric$ that metrizes the vague topology on $\Msp(\R^4)$, which then induces the uniform-on-compact topology through
\begin{align}
	\label{e.metricc}
	\metricc(\mu,\nu)
	:=
	\sum_{\ell=1}^\infty 2^{-\ell} \wedge \sup_{s\leq t\in[-\ell,\ell]} \metric(\mu_{s,t},\nu_{s,t})\ ,
	\qquad
	\mu,\nu\in\Csp(\Rtwo,\Msp(\R^4))\ .
\end{align}
The following proposition will be proven in Section~\ref{s.exists}.
\begin{prop}\label{p.conv}
Under the uniform-on-compact topology, $Z^{\theta,\e}$ converges in law to SHF($\theta$).
\end{prop}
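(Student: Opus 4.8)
The plan is to combine a tightness estimate with the uniqueness statement of Theorem~\ref{t.main}. Since $\Csp(\Rtwo,\Msp(\R^4))$ equipped with $\metricc$ is a Polish space, Prokhorov's theorem reduces the claim to two statements: (i) the family $\{Z^{\theta,\e}\}_{\e>0}$ is tight, and (ii) every subsequential limit in law is an SHF($\theta$). Granting (i)--(ii), Theorem~\ref{t.main} forces all subsequential limits to share a single law, so $Z^{\theta,\e}$ converges in law to that common limit, which is an SHF($\theta$).

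\emph{Tightness.} Recall that the vague topology on $\Msp(\R^4)$ is generated by the maps $\mu\mapsto\mu f$ with $f$ ranging over a fixed countable family $\{f_k\}\subset\Ccsp(\R^4)$, and that a subset of $\Msp(\R^4)$ is vaguely relatively compact if and only if $\sup_\mu\mu f_k<\infty$ for each $k$. Since the noise in \eqref{e.mollifiedshe} has zero mean, $\E Z^{\theta,\e}_{s,t}=\d x'\otimes\d x\,\hk_{t-s}(x-x')$ is the two-dimensional heat-kernel measure, independent of $\e$ and $\theta$ and vaguely continuous on $\Rtwo$; Markov's inequality turns this into the compact-containment bound, uniformly in $\e$. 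It then remains to prove equicontinuity in probability, on compact subsets of $\Rtwo$, of the real-valued processes $(s,t)\mapsto Z^{\theta,\e}_{s,t}f_k$. Writing $Z^{\theta,\e}=\E Z^{\theta,\e}+W^\e$, the mean part is deterministic and equicontinuous uniformly in $\e$, while for $W^\e$ I would apply a Kolmogorov-type continuity criterion on the two-parameter set $\Rtwo$ to a sufficiently high even moment of the increments $W^\e_{s,t}f_k-W^\e_{s',t'}f_k$. The required moment bounds are of the same nature as those in Section~\ref{s.unique.key} but are now available at all even orders, because at $\e>0$ the moments of the mollified SHE are governed by the mollified delta-Bose semigroup; to obtain them one uses the \emph{exact} finite-$\e$ composition $\Zfn^{\theta,\e}_{s,u}(x',x)=\int_{\R^2}\Zfn^{\theta,\e}_{s,t}(x',y)\,\Zfn^{\theta,\e}_{t,u}(y,x)\,\d y$ to reduce a general increment to short-time ones and then estimates those via the explicit semigroup, which is bounded uniformly in $\e$ and converges to $\sg^{\intv{n},\theta}$. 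Producing these increment bounds with the correct power of the time displacement, uniformly in $\e$ and including the behaviour near the diagonal $s=t$, is the first of two main technical points.

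\emph{Identification of the limit.} Given a subsequence along which $Z^{\theta,\e}$ converges in law, pass (by Skorokhod's representation theorem) to a version on which the convergence is almost sure, with limit $Z$, and verify the items of Definition~\ref{d.shf}. Item~\ref{d.shf.} is immediate. Item~\ref{d.shf.inde} passes to the limit because $Z^{\theta,\e}_{s,t}$ and $Z^{\theta,\e}_{t,u}$ are measurable with respect to the white noise on the disjoint strips $[s,t]\times\R^2$ and $[t,u]\times\R^2$, hence independent for every $\e$. For Item~\ref{d.shf.mome}, the moments of the mollified SHE converge to those of the delta-Bose semigroup, $\E[\prod_{i=1}^n Z^{\theta,\e}_{s,t}(g_i\otimes g_i')]\to\langle\bigotimes_i g_i,\sg^{\intv{n},\theta}(t-s)\bigotimes_i g_i'\rangle$ for $n\le4$ (indeed for all $n$) by \cite{bertini98,caravenna18,gu2021moments}, and the uniform-in-$\e$ bound $\sup_\e\E[(Z^{\theta,\e}_{s,t}(g\otimes g'))^{2n}]<\infty$ (again from the explicit semigroup) upgrades the vague convergence in law of $Z^{\theta,\e}_{s,t}$ to convergence of these mixed moments of order $\le n$, identifying their limits with $\E[\prod_i Z_{s,t}(g_i\otimes g_i')]$; this yields \eqref{e.mome} for $g_i,g_i'\in\Ccsp(\R^2)$, and then, by density, multilinearity, the Hölder bound $\E[|Z_{s,t}(g\otimes g')|^n]\lesssim(\norm{g}\,\norm{g'})^n$, and boundedness of $\sg^{\intv{n},\theta}(t-s)$ on $\Lsp^2(\R^{2n})$, for all $g_i,g_i'\in\Lsp^2(\R^2)$.

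\emph{The Chapman--Kolmogorov axiom.} The remaining, most delicate item is~\ref{d.shf.ck}. Fix $s<t<u$, let $\{\molli_\ell\}_\ell$ be a nonnegative smooth compactly supported approximate identity on $\R^2$ (which is $\Lsp^2$-applicable, with $\mprod_{\molli_\ell}$ well-defined per Definition~\ref{d.mproduct.}), fix $f\in\Ccsp(\R^4)$, and set $R^\e_\ell:=(Z^{\theta,\e}_{s,t}\mprod_{\molli_\ell}Z^{\theta,\e}_{t,u})f-Z^{\theta,\e}_{s,u}f$. Rewriting $Z^{\theta,\e}_{s,u}f$ through the exact finite-$\e$ composition gives
\[
	R^\e_\ell=\int \Zfn^{\theta,\e}_{s,t}(x_1,x_2)\,\big(\molli_\ell-\delta\big)(x_2-x_3)\,\Zfn^{\theta,\e}_{t,u}(x_3,x_4)\,f(x_1,x_4)\ ,
\]
so that, on squaring and using the independence of $Z^{\theta,\e}_{s,t}$ and $Z^{\theta,\e}_{t,u}$, the quantity $\E[(R^\e_\ell)^2]$ is a fixed finite combination of integrals of the two-point functions of $Z^{\theta,\e}_{s,t}$ and of $Z^{\theta,\e}_{t,u}$ against kernels built from $\molli_\ell$, $\delta$, and $f$. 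Letting $\e\to0$, these two-point functions converge to the kernels of $\sg^{\intv{2},\theta}(t-s)$ and $\sg^{\intv{2},\theta}(u-t)$, so $\E[(R^\e_\ell)^2]\to A_\ell$; letting then $\ell\to\infty$, the semigroup identity $\sg^{\intv{2},\theta}(t-s)\,\sg^{\intv{2},\theta}(u-t)=\sg^{\intv{2},\theta}(u-s)$ together with the strong convergence of the convolution operators $\phi\mapsto\molli_\ell\star\phi$ to the identity on $\Lsp^2$ forces $A_\ell\to0$ (this is essentially the computation of Section~\ref{s.tools.ck}). Finally, because $\molli_\ell$ and $f$ are bounded with $\molli_\ell$ compactly supported, and because the uniform first-moment bound controls the spatial tails, the functional $(\mu,\nu)\mapsto(\mu\mprod_{\molli_\ell}\nu)f$ is continuous along the almost surely convergent pair $(Z^{\theta,\e}_{s,t},Z^{\theta,\e}_{t,u})\to(Z_{s,t},Z_{t,u})$, so $R^\e_\ell\to R_\ell:=(Z_{s,t}\mprod_{\molli_\ell}Z_{t,u})f-Z_{s,u}f$ almost surely; Fatou's lemma then gives $\E[R_\ell^2]\le A_\ell\to0$, i.e.\ $R_\ell\to0$ in $\Lsp^2(\P)$ and hence in probability, for every $f\in\Ccsp(\R^4)$. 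Thus $Z$ satisfies Item~\ref{d.shf.ck} and is an SHF($\theta$), which together with Theorem~\ref{t.main} finishes the proof. The hard part here — the second main technical point — is exactly this controlled double limit: the finite-$\e$ identity carries no mollifier, whereas the axiom is phrased for a genuine $\Lsp^2$-applicable mollifier, so one must route through the explicit two-body delta-Bose semigroup.
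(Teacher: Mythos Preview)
Your overall architecture matches the paper's: tightness via a Kolmogorov bound on high even moments of increments (the paper's Lemma~\ref{l.conti}, which rests on an $\e$-analogue of Lemma~\ref{l.key}), then verification of Definition~\ref{d.shf}\ref{d.shf.ck}--\ref{d.shf.mome} for every subsequential limit, then Theorem~\ref{t.main}. The treatment of \ref{d.shf.inde} and \ref{d.shf.mome} is the same as the paper's, and the tightness sketch points at the right estimate.

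There is, however, a genuine gap in your verification of~\ref{d.shf.ck}. The functional $(\mu,\nu)\mapsto(\mu\mprod_{\molli_\ell}\nu)f$ is \emph{not} continuous for the vague topology on $\Msp(\R^4)\times\Msp(\R^4)$: even with $f\in\Ccsp(\R^4)$ and $\molli_\ell\in\Ccsp(\R^2)$, only $x_1$, $x_4$, and the difference $x_2-x_3$ are confined to compacts, while $x_2,x_3$ themselves range over all of $\R^2$. (Concretely: take $\mu_n=\delta_{(0,ne_1)}$, $\nu_n=\delta_{(ne_1,0)}$; both tend vaguely to $0$, yet $(\mu_n\mprod_{\molli_\ell}\nu_n)f=\molli_\ell(0)f(0,0)$.) So almost-sure vague convergence along the Skorokhod coupling does not give $R^\e_\ell\to R_\ell$ almost surely, and Fatou does not apply. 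Your appeal to ``the uniform first-moment bound controls the spatial tails'' is a statement about expectations and does not furnish the pathwise control you invoke.

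The clean fix---and this is what the paper does, albeit tersely---is to bypass the pathwise passage entirely. You have already established \ref{d.shf.inde} and \ref{d.shf.mome} for the limit $Z$; Lemma~\ref{l.product.well} then makes $Z_{s,t}\mprod_{\molli_\ell}Z_{t,u}$ well-defined, and $\E[R_\ell^{2}]$ can be expanded via the moment formula for $Z$ exactly as you expanded $\E[(R^\e_\ell)^{2}]$ via the $\e$-semigroup. The two expansions have the same limit term-by-term (by Proposition~\ref{p.GQT}), so $\E[R_\ell^{2}]=A_\ell$ directly, and $A_\ell\to0$ finishes. Alternatively, one can insert a cutoff $\chi_R(x_2)\chi_R(x_3)$, use genuine vague continuity of the truncated functional, and control the truncation error in $\Lsp^2(\P)$ uniformly in $\e$ via the second-moment formula; but the moment route is shorter.
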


\subsection{Polymer measure, conditional GMC, noise}
\label{s.intro.pm}
The axiomatic characterization has been used in the studied properties of the SHF and the associate polymer measure.
First, the polymer measure $\PM^{\theta}=\PM^{\theta}_{[s,t]}$ was recently constructed by \cite{clark2024continuum}. 
The overall idea follows the construction in $1+1$ dimensions by \cite{alberts2014intermediate}, but the required Chapman--Kolmogorov equation becomes much more challenging in $1+2$ dimensions.
The work \cite{clark2024continuum} overcame this challenge by devising a martingale approach to the Chapman--Kolmogorov equation.
In Section~\ref{s.tools}, we carry out a different approach via moments, which allows more general mollifiers as in Definition~\ref{d.mproduct.}. 
Given the polymer measure, an interesting question asks whether it is a GMC.
Indeed, for $\e>0$, one can check that $\Zfn^{\theta,\e}_{s,t}$ induces a polymer measure that is a GMC in the path space $C([s,t],\R^2)$.
The question asks whether the path-space GMC survives the $\e\to 0$ limit.
This is the case in $1+1$ dimensions, as shown in \cite{quastel2022kpz}.
In $1+2$ dimensions, however, the work \cite{clark2024continuum} explained in their Section~2.6 that $\PM^{\theta}$ cannot be a path-space GMC, and the work \cite{caravenna2023gmc} showed that the endpoint distribution of $\PM^{\theta}$ is not a GMC.
On the other hand, Clark conjectured a conditional GMC property, based on previous works on polymers on the diamond hierarchical lattice \cite{clark2023conditional} and \cite{alberts2017intermediate, alberts2019nested, clark2019high, clark2021weak, clark2022continuum, clark2023weak}.
The conjecture states that, if one takes $\PM^{\theta}$, which is random, as the reference measure, then conditioned on $\PM^{\theta}$ and for $\theta'>\theta$, the measure $\PM^{\theta'}$ can be realized as a GMC with noise strength $\theta'-\theta$.

The conditional GMC structure has recently been proven by \cite{clark2025conditional} and has been used to derive a few properties, including the strict positivity of the SHF tested against general test functions.
A key input to the proof is the axiomatic characterization from the present paper, more precisely Theorem~\ref{t.main.}.
The characterization allows one to show, via moment calculations, that the conditional GMC with the reference measure $\PM^{\theta}$ and noise strength $\theta'-\theta$ has the same law as $\PM^{\theta'}$.

%

A potential future direction is to study the SHF as a noise in the sense of \cite{tsirelson1998examples, tsirelson2004nonclassical}.
Based on the axiomatic formulation from the present paper, the recent work \cite{gu2025stochastic} proved that the SHF is a black noise, again in the sense of \cite{tsirelson1998examples, tsirelson2004nonclassical}.
On a related note, the work \cite{himwich2024directed} proved that the directed landscape, which the large-time limit of the logarithm of the one-dimensional SHE \cite{wu2023kpz} (see also \cite{quastel2023convergence,virag2020heat}), is a black noise.

\subsection{Corollaries, generalizations}
\label{s.intro.general}
Let us mention a few immediate corollaries of Theorem~\ref{t.main} and Proposition~\ref{p.conv}.
First, SHF($\theta$) enjoys a few invariants.
The finite-dimensional (in time) analogs of these invariants were obtained in \cite[Theorem~1.2]{caravenna2023critical}.
Let $(\tau_z g)(x) := g(x-z)$, with $x,z\in\R^2$, denote the shift operator on $\R^2$, and, for $r>0$, let $(\sigma^{n}_r f)(x):= f(x/r)$ denote the scaling operators on $\R^{2n}$.
\begin{cor}\label{c.invariant}
Let $Z^{\theta}$ be SHF($\theta$).
Then for every $u\in\R$, $z\in\R^2$, and $r>0$,
$Z^{\theta}_{\cdot\,+u,\,\cdot\,+u}=Z^\theta$ in law, 
$Z^{\theta}\circ \tau_{z}^{\otimes 2}=Z^\theta$ in law, 
and $Z^{\theta}_{r\,\cdot,r\,\cdot}\circ\sigma^{2}_{\sqrt{r}}{}^{\otimes 2}=rZ^{\theta+\log r}$ in law.
\end{cor}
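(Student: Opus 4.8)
The plan is to derive each of the three identities from the uniqueness statement of Theorem~\ref{t.main}: for every deterministic transformation in Corollary~\ref{c.invariant}, I would verify that applying it to an SHF($\theta$) produces a process obeying all four requirements of Definition~\ref{d.shf} with the stated parameter --- $\theta$ for the two translations and $\theta+\log r$ for the parabolic scaling --- and then invoke Theorem~\ref{t.main} to conclude equality in law. Since all four requirements are preserved, existence (Proposition~\ref{p.conv}) plays no role here.

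For the two translations, set $Z'_{s,t}:=Z^\theta_{s+u,t+u}$ and $Z''_{s,t}:=Z^\theta_{s,t}\circ\tau_z^{\otimes2}$. Axiom \ref{d.shf.} holds because $(s,t)\mapsto(s+u,t+u)$ is a homeomorphism of $\Rtwo$ and $\mu\mapsto\mu\circ\tau_z^{\otimes2}$ is continuous on $\Msp(\R^4)$; axiom \ref{d.shf.inde} is inherited because both maps send disjoint time intervals to disjoint time intervals; and axiom \ref{d.shf.ck} follows once one notes that the product $\mprod_\molli$ of Definition~\ref{d.mproduct.} commutes with both operations --- trivially for the time shift, and for the space shift because the kernel $\molli(x_2-x_3)$ is translation invariant, so $(\mu\circ\tau_z^{\otimes2})\mprod_\molli(\nu\circ\tau_z^{\otimes2})=(\mu\mprod_\molli\nu)\circ\tau_z^{\otimes2}$ --- hence the same $\Lsp^2$-applicable mollifier works, with the vague-in-probability convergence to $0$ preserved. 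For axiom \ref{d.shf.mome}, one uses that $\sg^{\intv{n},\theta}(t)$ depends on the times only through the difference $t$, and is translation invariant on $\Lsp^2(\R^{2n})$ --- a built-in feature of the construction recalled in Section~\ref{s.tools.deltabose}, the generator being a translation-invariant Laplacian with translation-invariant contact interactions --- so that $\Ip{\bigotimes_i(g_i\circ\tau_z),\sg^{\intv{n},\theta}(t-s)\bigotimes_i(g'_i\circ\tau_z)}=\Ip{\bigotimes_i g_i,\sg^{\intv{n},\theta}(t-s)\bigotimes_i g'_i}$. Hence $Z'$ and $Z''$ are SHF($\theta$).

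The parabolic scaling is the substantive case. Writing $Z'_{s,t}:=r^{-1}Z^\theta_{rs,rt}\circ\sigma^2_{\sqrt r}{}^{\otimes2}$, the claim reduces to showing $Z'$ is an SHF($\theta+\log r$). Axioms \ref{d.shf.} and \ref{d.shf.inde} are preserved as above, parabolic scaling being a homeomorphism of $\Rtwo$ that respects the interval structure. For axiom \ref{d.shf.ck} I would rescale the mollifier: if $\{v_\ell\}$ witnesses \ref{d.shf.ck} for $Z^\theta$ at $(rs,rt,ru)$, then the $\Lsp^2$-normalized rescaling $\{r\,v_\ell(\sqrt r\,\cdot)\}$ is again $\Lsp^2$-applicable (its Fourier transform is $\hat v_\ell(\cdot/\sqrt r)$, so nonnegativity, the operator-norm bound, and strong convergence to the identity are inherited) and witnesses \ref{d.shf.ck} for $Z'$ at $(s,t,u)$, since $\mprod$ transforms covariantly under $\circ\sigma^2_{\sqrt r}{}^{\otimes2}$ up to a power of $r$ that the prefactors absorb. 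For axiom \ref{d.shf.mome}, testing the left side of \eqref{e.mome} for $Z'$ against $g_i\otimes g'_i$ yields $r^{-n}\Ip{\bigotimes_i g_i(\cdot/\sqrt r),\sg^{\intv{n},\theta}(r(t-s))\bigotimes_i g'_i(\cdot/\sqrt r)}$; writing $\bigotimes_i g_i(\cdot/\sqrt r)=r^{n/2}U_r^{-1}\bigotimes_i g_i$ for the unitary $U_rF:=r^{n/2}F(\sqrt r\,\cdot)$ on $\Lsp^2(\R^{2n})$, the powers of $r$ cancel and the identity reduces to the scaling covariance $U_r\,\sg^{\intv{n},\theta}(rt)\,U_r^{-1}=\sg^{\intv{n},\theta+\log r}(t)$ of the delta-Bose semigroup. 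This covariance mirrors the structure of \eqref{e.beta.e}, in which $\theta$ enters the coefficient of $|\log\e|^{-2}$ additively, so that the shift in $|\log\e|$ forced by rescaling $\e$ parabolically is absorbed exactly by $\theta\mapsto\theta+\log r$; it can be extracted from the construction of $\sg^{\intv{n},\theta}$ in Section~\ref{s.tools.deltabose} (cf.\ \cite{rajeev99,dimock04,gu2021moments}).

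The main obstacle I anticipate is precisely this scaling covariance $U_r\,\sg^{\intv{n},\theta}(rt)\,U_r^{-1}=\sg^{\intv{n},\theta+\log r}(t)$: establishing it requires pinning down all normalizations at once --- the powers of $r$, the $\sqrt r$ in space, the shift $\log r$ in $\theta$, and the $\Lsp^2$- rather than $\Lsp^1$-normalization of the test functions. Two consistency checks are worth doing first: $n=1$, where $\sg^{\intv{1},\theta}$ is the $\theta$-independent heat semigroup and the claimed identity is the classical parabolic scaling of Brownian motion; and $n=2$, where the explicit formula for $\sg^{\intv{2},\theta}$ makes the $\theta\mapsto\theta+\log r$ covariance transparent.
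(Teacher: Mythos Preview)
Your proposal is correct and follows essentially the same route as the paper: verify that each transformed process satisfies Definition~\ref{d.shf} with the appropriate parameter and invoke Theorem~\ref{t.main}, with the moment axiom reducing to the corresponding invariance of the delta-Bose semigroup, namely $\tau_{-z}^{\otimes n}\sg^{\intv{n},\theta}(t)\tau_{z}^{\otimes n}=\sg^{\intv{n},\theta}(t)$ and $\sigma^n_{1/\sqrt r}\sg^{\intv{n},\theta}(rt)\sigma^n_{\sqrt r}=\sg^{\intv{n},\theta+\log r}(t)$. The paper simply asserts these semigroup identities as ``not hard to verify from the definition of $\sg^{\intv{n},\theta}$ given in Section~\ref{s.tools.deltabose}'', whereas you spell out more of the bookkeeping for axioms~\ref{d.shf.}--\ref{d.shf.ck}; but the strategy is the same.
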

\noindent{}%
The first invariant in Corollary~\ref{c.invariant} follows from Theorem~\ref{t.main} because $Z^\theta_{\cdot+u,\cdot+u}$ also satisfies Definition~\ref{d.shf}.
The latter two invariants follow thanks to the analogous invariants of the delta-Bose semigroup: 
$\tau_{-z}^{\otimes n}\sg^{\intv{n},\theta}(t)\tau_{z}^{\otimes n}=\sg^{\intv{n},\theta}(t)$ and 
$\sigma^n_{1/\sqrt{r}}\sg^{\intv{n},\theta}(rt)\sigma^n_{\sqrt{r}}=\sg^{\intv{n},\theta+\log r}(t)$, 
which are not hard to verify from the definition of $\sg^{\intv{n},\theta}$ given in Section~\ref{s.tools.deltabose}.

The next corollary concerns higher moments of the SHF.
First, \cite[Theorem~1.6]{gu2021moments} gives the result about the convergence of moments.
\begin{prop}[Theorem~1.6 in \cite{gu2021moments}]
\label{p.GQT}
For every $n\in\Z_{>0}$, $s<t$, $g_1,\ldots,g_n\in\Lsp^2(\R^2)$,
\begin{align}
	\sup\Big\{ \Big| 
		\E\Big[\prod_{i=1}^n Z^{\theta,\e}_{s,t} \, g'_i\otimes g_i  \Big] 
		-
		\IP{ \bigotimes_{i=1}^n g'_i, \sg^{\intv{n}}(t-s) \bigotimes_{i=1}^n g_i }
	\Big|
	\
	\Big| 
	\, \norm{g'_i}_{\Lsp^2(\R^2)} \leq 1
	\Big\}
	\xrightarrow{\e\to 0} 0\ .
\end{align}
\end{prop}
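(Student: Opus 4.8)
The plan is to route through the Feynman--Kac representation of the moments of the mollified SHE, expand in chaos, resum the two-body interactions using the renormalization built into $\beta_\e$, and match the result to the perturbative construction of the delta-Bose semigroup recalled in Section~\ref{s.tools.deltabose}.

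First I would write down the chaos expansion of the moments. For $\e>0$ the noise $\xi_\e$ is spatially smooth with covariance $\E[\xi_\e(t,x)\xi_\e(s,y)]=\delta(t-s)\,R_\e(x-y)$, where $R_\e(x):=\e^{-2}\Phi(x/\e)$ and $\Phi$ is as in \eqref{e.noiseMolli}; the linearity of \eqref{e.mollifiedshe} then gives, for the fundamental solution,
\begin{align*}
	\E\Big[\prod_{i=1}^n \Zfn^{\theta,\e}_{0,t}(x'_i,x_i)\Big]
	=
	\Big(\prod_{i=1}^n\hk_t(x_i-x'_i)\Big)\,
	\EE_{\mathbf{B}}\Big[\exp\Big(\beta_\e\!\!\sum_{1\le i<j\le n}\!\int_0^t\! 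R_\e(B^i_s-B^j_s)\,\d s\Big)\Big],
\end{align*}
with $\mathbf{B}=(B^1,\ldots,B^n)$ a family of independent Brownian bridges, $B^i$ running from $x'_i$ to $x_i$ on $[0,t]$, and $\hk$ the heat kernel. Integrating against $\bigotimes_i g'_i$ and $\bigotimes_i g_i$ rewrites the left side as $\E[\prod_i Z^{\theta,\e}_{0,t}\,g'_i\otimes g_i]$ and the right side as a matrix element $\IP{\bigotimes_i g'_i,\,\sg^{\intv{n},\theta,\e}(t)\bigotimes_i g_i}$ of an explicit operator $\sg^{\intv{n},\theta,\e}(t)$ on $\Lsp^2(\R^{2n})$; Taylor-expanding the exponential produces a sum over $k\ge0$, over ordered times $0<s_1<\cdots<s_k<t$, and over the interacting pair $\{i_\ell,j_\ell\}$ at each $s_\ell$, of integrals built from heat kernels and the weights $\beta_\e R_\e$. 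On the limit side, as recalled in Section~\ref{s.tools.deltabose} (following \cite{rajeev99,dimock04}), $\sg^{\intv{n},\theta}(t)$ — equivalently its resolvent $\mathcal{G}^{\intv{n},\theta}(\lambda):=\int_0^\infty e^{-\lambda t}\sg^{\intv{n},\theta}(t)\,\d t$ — is given by a series indexed by pair-sequences $(\{i_1,j_1\},\ldots,\{i_m,j_m\})$ with no two consecutive pairs equal, alternating the free $n$-particle heat resolvent with a single renormalized two-body operator $\Top^\theta$ at each interaction.

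The crux is the two-body resummation. Grouping each maximal run of consecutive same-pair interactions in the $\e$-expansion and summing it over its length and internal times produces, in the Laplace variable, a geometric-type series whose $r$-th term is $\beta_\e$ times an $r$-fold alternation of $R_\e$ with the two-particle free resolvent; near the diagonal the latter carries the logarithmic divergence $\sim\frac1{2\pi}|\log\e|$ characteristic of dimension two. The precise form of $\beta_\e$ in \eqref{e.beta.e} — leading term $2\pi/|\log\e|$ together with the $\theta$-dependent second-order correction, calibrated so that the constant $\int_{\R^4}\Phi(x)\log|x-x'|\Phi(x')\,\d x\,\d x'$ is absorbed — is exactly what makes this series converge as $\e\to0$ to the finite renormalized operator $\Top^\theta$; doing this run by run then gives convergence of each fixed-$m$ contribution of $\mathcal{G}^{\intv{n},\theta,\e}(\lambda)$ to that of $\mathcal{G}^{\intv{n},\theta}(\lambda)$.

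Finally I would upgrade termwise convergence to convergence of the whole sum, uniformly over $\norm{g'_i}_{\Lsp^2}\le1$, by a tail bound: there are at most $\binom{n}{2}^m$ pair-sequences of length $m$, and for all small $\e$ each interaction block has operator norm at most $C\lambda^{-\delta}$ for some $\delta>0$ coming from the smoothing power of the heat resolvent, so the $m$-sum converges geometrically once $\lambda$ is large. This yields operator-norm convergence of $\mathcal{G}^{\intv{n},\theta,\e}(\lambda)\to\mathcal{G}^{\intv{n},\theta}(\lambda)$ for large $\lambda$; a Laplace-inversion (Tauberian) argument, together with an a priori equicontinuity-in-$t$ bound for $\sg^{\intv{n},\theta,\e}$, transfers this to $\sg^{\intv{n},\theta,\e}(t)\to\sg^{\intv{n},\theta}(t)$ at each fixed $t$, whereupon pairing with the fixed $g_i$ and $\norm{g'_i}_{\Lsp^2}\le1$ gives the stated limit. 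The main obstacle is the renormalization step: one must show the single-run geometric series is summable for each fixed small $\e$, identify its $\e\to0$ limit with $\Top^\theta$, and keep the errors summable over the number $m$ of runs — since the 2D point interaction is only marginally singular, this demands the quantitative logarithmic cancellation encoded in $\beta_\e$ rather than any soft compactness argument.
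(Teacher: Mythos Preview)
This proposition is not proven in the present paper: it is quoted verbatim as Theorem~1.6 of \cite{gu2021moments} and used as an input. What the paper does supply (in Section~\ref{s.exists.notation}) is the $\e$-level moment formula \eqref{e.mome.e}, derived exactly as you indicate---Feynman--Kac \eqref{e.fk}, Taylor expansion of the exponential, then regrouping consecutive same-pair interactions to pass from \eqref{e.mome.e.pf} to $\sg^{\intv{n},\e}$. The convergence $\sg^{\intv{n},\e}(t)\to\sg^{\intv{n}}(t)$ is deferred to the citation.

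Your outline is a faithful sketch of the argument in \cite{gu2021moments}, with one quantitative slip worth flagging. You claim each interaction block contributes operator norm $\leq C\lambda^{-\delta}$ for some $\delta>0$; in fact the two-dimensional delta-Bose problem is \emph{marginally} singular, and the gain per block is only logarithmic, of order $1/\log\lambda$---see \eqref{e.bd.Jop}, \eqref{e.bd.Jop.e}, and the factor $(c_0/\log(\lambda-c_0-1))^{m-1}$ in Lemma~\ref{l.sum}. This still sums geometrically once $\lambda$ is large enough, so the conclusion survives, but the mechanism is the $|\log t|^{-2}$ integrability of the $\Jop$-kernel rather than polynomial smoothing of the heat resolvent. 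A second, cosmetic point: the paper writes the Feynman--Kac formula with free Brownian motions and terminal delta masses \eqref{e.fk} rather than bridges, though the two are of course equivalent after absorbing the heat kernels.
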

\noindent{}%
Together, Propositions~\ref{p.conv} and \ref{p.GQT} immediately give the corollary.
\begin{cor}\label{c.mome}
SHF($\theta$) satisfies Definition~\ref{d.shf}\ref{d.shf.mome} for all $\barn<\infty$.
\end{cor}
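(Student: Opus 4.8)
The plan is to obtain the corollary from the two inputs just recorded. Proposition~\ref{p.conv} provides that $Z^{\theta,\e}$ converges in law, in $\Csp(\Rtwo,\Msp(\R^4))$ under the uniform-on-compact topology, to an SHF($\theta$) that I will call $Z$; in particular $Z^{\theta,\e}_{s,t}\to Z_{s,t}$ in law, vaguely, for each fixed $s<t$. By Theorem~\ref{t.main} the law of an SHF($\theta$) is unique, so it suffices to verify \eqref{e.mome} for this particular $Z$ and all $\barn<\infty$. Proposition~\ref{p.GQT} provides that every positive-integer moment of $Z^{\theta,\e}_{s,t}$ tested against $\Lsp^2(\R^2)$ functions converges to the corresponding delta-Bose value. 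Fix $s<t$ and $n\in\Z_{>0}$. I would first treat nonnegative $g_i,g'_i\in\Ccsp(\R^2)$: then $g_i\otimes g'_i\in\Ccsp(\R^4)$, so $\mu\mapsto\prod_{i=1}^n\mu_{s,t}(g_i\otimes g'_i)$ is a nonnegative continuous functional on $\Csp(\Rtwo,\Msp(\R^4))$. Since the square of $\prod_{i=1}^n Z^{\theta,\e}_{s,t}(g_i\otimes g'_i)$ is a $2n$-fold product of pairings of the same type, Proposition~\ref{p.GQT} at level $2n$ gives $\sup_\e\E\big[\big(\prod_i Z^{\theta,\e}_{s,t}(g_i\otimes g'_i)\big)^2\big]<\infty$, so $\{\prod_i Z^{\theta,\e}_{s,t}(g_i\otimes g'_i)\}_\e$ is uniformly integrable; convergence in law together with uniform integrability then upgrades the moment convergence of Proposition~\ref{p.GQT} to $\E\big[\prod_i Z_{s,t}(g_i\otimes g'_i)\big]=\ip{\bigotimes_i g_i,\sg^{\intv{n},\theta}(t-s)\bigotimes_i g'_i}$. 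Since nonnegative $\Ccsp(\R^2)$ pairings dominate signed ones and have finite moments, multilinearity extends this to \eqref{e.mome} for all $n$ and all $g_i,g'_i\in\Ccsp(\R^2)$.

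Next I would pass from $\Ccsp(\R^2)$ to $\Lsp^2(\R^2)$. For nonnegative $g_i,g'_i\in\Lsp^2(\R^2)$, choose nonnegative $g_i^{(k)},h_i^{(k)}\in\Ccsp(\R^2)$ with $g_i^{(k)}\to g_i$ and $h_i^{(k)}\to g'_i$ in $\Lsp^2(\R^2)$ and, along a common subsequence, Lebesgue-a.e.\ on $\R^2$. Although $Z_{s,t}$ is typically singular, it a.s.\ charges no Lebesgue-null cylinder set: by the $n=1$ case of \eqref{e.mome}, which is already available from Definition~\ref{d.shf}\ref{d.shf.mome}, for Lebesgue-null $N\subset\R^2$ and $h\in\Ccsp(\R^2)$ one has $\E[Z_{s,t}(\ind_N\otimes h)]=\ip{\ind_N,\sg^{\intv{1},\theta}(t-s)h}=0$ because $\ind_N$ vanishes in $\Lsp^2(\R^2)$, and letting $h\uparrow\ind_{\R^2}$ through $\Ccsp(\R^2)$ (monotone convergence) gives $Z_{s,t}(N\times\R^2)=0$ a.s., and symmetrically $Z_{s,t}(\R^2\times N)=0$ a.s. Hence $g_i^{(k)}\otimes h_i^{(k)}\to g_i\otimes g'_i$ $Z_{s,t}$-a.e., a.s., and two applications of Fatou's lemma (against $Z_{s,t}$, then against $\E$), together with the previous paragraph and the $\Lsp^2$-boundedness of $\sg^{\intv{n},\theta}(t-s)$, yield
\begin{align*}
	\E\Big[\prod_i Z_{s,t}(g_i\otimes g'_i)\Big]
	&\leq
	\liminf_k\E\Big[\prod_i Z_{s,t}(g_i^{(k)}\otimes h_i^{(k)})\Big]
	\\
	&=
	\liminf_k\IP{\bigotimes_i g_i^{(k)},\,\sg^{\intv{n},\theta}(t-s)\bigotimes_i h_i^{(k)}}
	\\
	&=
	\IP{\bigotimes_i g_i,\,\sg^{\intv{n},\theta}(t-s)\bigotimes_i g'_i}\ .
\end{align*}
In particular these moments are finite for every $n$, so $(g_1,\dots,g_n,g'_1,\dots,g'_n)\mapsto\E[\prod_i Z_{s,t}(g_i\otimes g'_i)]$ is a well-defined multilinear functional on $(\Lsp^2(\R^2))^{2n}$, which the displayed inequality (applied to $|g_i|,|g'_i|$) bounds by $\normop{\sg^{\intv{n},\theta}(t-s)}\prod_i\norm{g_i}_{\Lsp^2}\norm{g'_i}_{\Lsp^2}$, hence is continuous; the right-hand side of \eqref{e.mome} is continuous and multilinear as well. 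The two agree on the dense subset $(\Ccsp(\R^2))^{2n}$ by the previous paragraph, so they agree on all of $(\Lsp^2(\R^2))^{2n}$. This is precisely Definition~\ref{d.shf}\ref{d.shf.mome} for every $\barn<\infty$.

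I expect the passage to $\Lsp^2$ to be the only real obstacle. The pairing $\mu\mapsto\mu(g\otimes g')$ is not vaguely continuous when $g,g'$ are merely $\Lsp^2$, so Proposition~\ref{p.conv} does not transfer on its own; and, because \eqref{e.mome} is assumed only through $\barn=4$, one must separately establish that $\E[\prod_i Z_{s,t}(g_i\otimes g'_i)]$ is even finite for $n\geq 5$, which is what the Fatou bound delivers---using the (at first sight surprising) fact that the singular measure $Z_{s,t}$ assigns no mass to Lebesgue-null cylinder sets. By contrast, the $\Ccsp$ case is routine once the level-$2n$ second-moment bound of Proposition~\ref{p.GQT} is in hand.
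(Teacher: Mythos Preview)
Your proof is correct and follows the same route the paper takes: combine Proposition~\ref{p.conv} with Proposition~\ref{p.GQT}, using the level-$2n$ moment bound from the latter to get uniform integrability and hence pass the limit inside the expectation. The paper states only that the two propositions ``immediately give the corollary''; your first paragraph spells out exactly the uniform-integrability step the paper leaves implicit.

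Your second paragraph, the passage from $\Ccsp(\R^2)$ to $\Lsp^2(\R^2)$, is correct but heavier than what the paper does. Immediately after the corollary the paper observes that \eqref{e.mome} is equivalent to the equality of measures $\E Z_{s,t}^{\otimes n}=\d x\otimes\d x'\,\sg^{\intv{n},\theta}(t-s;x',x)$ in $\Msp(\R^{2\intv{n}}\times\R^{2\intv{n}})$, so the choice of test-function class is immaterial once the identity holds on $\Ccsp(\R^2)^{\otimes n}$. This bypasses your Fatou argument and the absolute-continuity-of-marginals step. Your route has the virtue of being self-contained and of making explicit that $Z_{s,t}(g\otimes g')$ is a.s.\ finite for $g,g'\in\Lsp^2(\R^2)$, but the paper's measure-equality viewpoint is shorter.
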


Next, we note that the choice of the test functions in Definition~\ref{d.shf}\ref{d.shf.mome} has much flexibility.
As we will see in Section~\ref{s.tools.deltabose}, the delta-Bose semigroup $\sg^{\intv{n},\theta}(t)$ has as a positive kernel $\sg^{\intv{n},\theta}(t;x',x)$, where $x,x'\in\R^{2\intv{n}}$ and we write $\intv{n}=\{1,\ldots,n\}$ and $\R^{2n}=\R^{2\intv{n}}$ to streamline subsequent notation.
Hence, \eqref{e.mome} is equivalent to
\begin{align}
	\E Z_{s,t}^{\otimes n} = \d x \otimes \d x' \, \sg^{\intv{n},\theta}(t-s;x',x)
	\quad
	\text{in } \Msp(\R^{2\intv{n}}\times\R^{2\intv{n}})\ .
\end{align}
We choose $\Lsp^2(\R^2)^{\otimes n}$ to be the set of test functions just for convenience, and the choice can be any that uniquely characterizes measures in $\Msp(\R^{2\intv{n}}\times\R^{2\intv{n}})$.

We end this subsection by stating a slightly more general version of Theorem~\ref{t.main}, which is the one that we will actually prove.
\begin{thm}\label{t.main.}
Take any deterministic dense $\dense\subset\R$ and consider $\Msp(\R^4)$-valued processes $Z=Z_{s,t}$ and $\tilZ=\tilZ_{s,t}$ over $s<t\in\dense$.
If $Z$ and $\tilZ$ satisfy Definition~\ref{d.shf}\ref{d.shf.ck}--\ref{d.shf.mome} for $t_0<t_1<\ldots<t_k\in\dense$, then $Z$ and $\tilZ$ have the same finite dimensional distributions.
\end{thm}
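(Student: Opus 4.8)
The plan is to prove Theorem~\ref{t.main.}, which then yields Theorem~\ref{t.main} by taking $\dense=\R$. Since $\dense$ is dense and the processes are indexed by $s<t\in\dense$, it suffices by translation- and scaling-type reductions (or simply by working with a fixed finite set of times) to show that for any $t_0<t_1<\cdots<t_k$ in $\dense$, the joint law of $(Z_{t_0,t_1},\ldots,Z_{t_{k-1},t_k})$ matches that of $(\tilZ_{t_0,t_1},\ldots,\tilZ_{t_{k-1},t_k})$; by Definition~\ref{d.shf.inde} these are products of independent factors, so the real content is comparing the law of a single $Z_{s,t}$ against $\tilZ_{s,t}$, and more precisely comparing functionals of the form $F\big(Z_{s,u_1}\mprodd Z_{u_1,u_2}\mprodd\cdots\big)$. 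I would normalize to $s=0$, $t=1$ and prove $Z_{0,1}\stackrel{\mathrm{law}}{=}\tilZ_{0,1}$.

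The first main step is the Lindeberg exchange, carried out along the lines sketched in Section~\ref{s.intro.pf}. Using the multifold Chapman--Kolmogorov equation (which Section~\ref{s.tools.ck} derives from Definition~\ref{d.shf.ck} together with the moment identity Definition~\ref{d.shf.mome}), write $Z_{0,1}=Z_{0,1/N}\mprodd\cdots\mprodd Z_{(N-1)/N,1}$ and similarly for $\tilZ$, with $\tilZ$ taken independent of $Z$. Fix a smooth bounded test functional $\Psi$ of the paired measure (tested against finitely many $f\in\Ccsp(\R^4)$, say), and telescope: the difference $\E[\Psi(Z_{0,1})]-\E[\Psi(\tilZ_{0,1})]$ is a sum over $i=1,\ldots,N$ of terms in which the first $i-1$ blocks are $\tilZ$, the last $N-i$ blocks are $Z$, and we swap block $i$ from $Z_{(i-1)/N,i/N}$ to $\tilZ_{(i-1)/N,i/N}$. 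Decompose each block as $\E Z + W$ with $W:=Z-\E Z$; since $\E Z=\E\tilZ$ by Definition~\ref{d.shf.mome}, a Taylor expansion of $\Psi$ in the single swapped block to order $\barn-1=3$ has matching deterministic part and matching first three moments in $W$ versus $\tilW$ (here Definition~\ref{d.shf.mome} for $n=1,2,3$ enters crucially, after checking that the relevant multilinear moments of the $\mprodd$-products reduce to the semigroup expressions — this is where $\barn\geq4$ is really needed, cf.\ Remark~\ref{r.mome}). Hence each swap error is controlled by the fourth absolute moment of $W_{(i-1)/N,i/N}$, smeared against the bounded neighboring factors.

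The second main step, and the technical heart, is the moment bound: establish that $\E\big[(W_{s,t}\,(g\otimes g'))^{4}\big]$ — and more generally the $4$th absolute moment tested against $\Ccsp$ functions — is $o(|t-s|)$ as $t-s\to0$. This is the content of Section~\ref{s.unique.key}. Using the explicit kernel of the delta-Bose semigroup $\sg^{\intv{4},\theta}$ recalled in Section~\ref{s.tools.deltabose}, one expands $\E[Z_{s,t}^{\otimes4}]$ into the free (Gaussian/heat) part plus interaction terms; the centering by $\E Z$ cancels the disconnected pieces, and one shows the surviving connected contributions carry at least a factor $(t-s)^{1+\delta}$ (the heuristic being that for $p>2$ the short-time singularity of the delta-Bose interaction is integrable with room to spare, while $p=2$ is exactly critical, just as for Brownian motion). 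Plugging this into the telescoping sum gives $|\E[\Psi(Z_{0,1})]-\E[\Psi(\tilZ_{0,1})]|\leq N\cdot o(1/N)\to0$, and since $\Psi$ ranges over a separating class of bounded functionals, $Z_{0,1}\stackrel{\mathrm{law}}{=}\tilZ_{0,1}$; the same argument applied block-by-block over $t_0<\cdots<t_k$, using independence, upgrades this to equality of all finite-dimensional distributions.

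I expect the main obstacle to be the second step: making the $4$th-moment estimate $\E[W_{s,t}^4(\,\cdot\,)]=o(|t-s|)$ rigorous and uniform enough (in the test functions and in the neighboring frozen blocks) to survive the telescoping. The delta-Bose kernel for $n=4$ involves a sum over pairings with nested resolvent-type convolutions, and extracting the sharp short-time power requires careful bookkeeping of which pairing structures are killed by centering and a clean bound on the remainder — together with verifying that the Taylor remainder in Lindeberg can indeed be dominated by exactly this quantity rather than by something larger. A secondary technical point is checking that the $\mprodd$-product's moments genuinely coincide with the delta-Bose semigroup composed over subintervals (so that the first three moments of $W$ and $\tilW$ match inside the expansion); this is handled by the Chapman--Kolmogorov bootstrapping of Section~\ref{s.tools.ck}, but its interaction with the mollifier $\molli_\ell\to\mathrm{id}$ limit needs care.
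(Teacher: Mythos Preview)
Your proposal is correct and follows essentially the same approach as the paper: reduce to a single interval, run Lindeberg over a fine partition, Taylor expand to third order so that the first three moments of $W$ and $\tilW$ cancel, and control the remainder by a fourth-moment bound on the centered increment that is $o(|t-s|)$. Two small points to tighten: first, since $\dense$ is only dense, the partition points must be chosen in $\dense$ rather than taken as $i/N$, so work with a general $T\subset\dense\cap(0,1]$ and send $\operatorname{mesh}T\to0$; second, the quantity that actually appears in the Lindeberg remainder is not the raw $\E[(W_{s,t}\,g\otimes g')^4]$ but rather $\E[(\tilZ_{0,s}\mprod W_{s,t}\mprod Z_{t,1}\,f)^4]$, which after evaluating moments becomes $\langle\cdot,\sg^{\intv{4}}(s)\,\sgg^{\intv{4}}(t-s)\,\sg^{\intv{4}}(1-t)\,\cdot\rangle$, so the short-time bound must be phrased as an operator-norm estimate $\norm{\sgg^{\intv{4}}(t)}_{p,a\to q,a}\leq c\,t^{\frac{3}{2}(1-2/p)}$ (the paper's Lemma~\ref{l.key}), which beats $t$ once $p>6$.
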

\noindent{}%
Indeed, by the continuity requirement in Definition~\ref{d.shf}\ref{d.shf.}, Theorem~\ref{t.main.} for $\dense=\R$ immediately implies Theorem~\ref{t.main}.

\subsection{Some related literature}
\label{s.intro.lite}
An important tool in studying the SHF is the two-dimensional delta-Bose gas, which is a quantum many-body system with an attractive, pairwise delta potential. 
Quantum systems with delta potentials exhibit intriguing physical properties, pose significant mathematical challenges, and enjoy a long history of study; see \cite{albeverio88} for a review.
We mention the works \cite{dellantonio1994hamiltonians, rajeev99, dimock04, griesemer2020short, griesemer2022short} on the two-dimensional delta-Bose gas.
On the SHF side, based on \cite{albeverio88,albeverio1995fundamental}, the work \cite{bertini98} established the convergence of the second moment of the noise-mollified SHE.
The work \cite{feng2015} carried out detailed analysis of the second moment and further showed that the pointwise limits of the third and second moments behave inconsistently with the lognormal distribution. 
Based on techniques developed in \cite{caravenna18b} to control the chaos series, the work \cite{caravenna18} obtained the convergence of the third moment of the discrete polymers and mollified SHE. 
Later, the work \cite{gu2021moments} obtained the convergence of all moments of the mollified SHE. 
This is done based on the works \cite{rajeev99,dimock04}, which give a transparent and elegant description of the resolvent of the delta-Bose gas. 
These works used a crucial $\Lsp^2$ bound from \cite[Lemma~3.1]{dellantonio1994hamiltonians}. 
The $\Lsp^p$ analog of the bound was later proven and used in \cite[Lemma 6.8]{caravenna2023critical}, and the present paper also uses this $\Lsp^p$ bound; see \eqref{e.bd.swapping.int} and the comment that follows. 
The works \cite{chen2022two,chen2024delta,chen2024stochastic} develop probabilistic approaches to study the delta-Bose gas.
High moments of polymers are studied in \cite{cosco2023moments,cosco2024moments,cosco2025high}, and real-power moments of the polymers in the subcritical regime are studied in \cite{lygkonis2023moments}.
After the posting of the first version of this manuscript, \cite{parekh2025moment,parekh2025intermediate} use the ideas of moments to show the convergence to the 1$d$ SHE of certain ill-behaved or non-Markov models.

At sub- and quasi-critical temperatures, Gaussian limits are obtained for the polymers, the mollified SHE, and the mollified KPZ equation \cite{caravenna17, chatterjee18, caravenna18a, gu2020, caravenna2023quasi, nakajima2023fluctuations} and for the nonlinear mollified SHEs \cite{tao2024gaussian, dunlap2024edwards}. 
Instances of the SHE and polymer with L\`{e}vy noise have been studied \cite{berger2022continuum,berger2023stochastic}.
The limiting behaviors of the one-point distributions have been studied in \cite{caravenna17, dunlap2022forward, dunlap2023forward2, cosco2024central}. 
In dimension three and higher, the law of large numbers and fluctuations of the polymers, mollified SHE, and its nonlinear counterparts have been studied in \cite{magnen2018scaling, gu2018edwards, dunlap2018fluctuations, comets2020renormalizing, comets2024spacetime, mukherjee2016weak, comets2017rate, comets2020renormalizing, cosco2021gaussian, cosco2022law, comets2024spacetime, dunlap2024additive,junk2025}.
There has been much study of the phase transition, scaling functions, and related behaviors of direct polymers with general weight.
We refer to \cite{lacoin2010new,berger2017high,junk2024strong,lacoin2025localization} and the references therein for some recent work.

\subsection*{Acknowledgment}
I thank Yu Gu, Jeremy Quastel, and B\'{a}lint Vir\'{a}g for many useful and inspiring discussions.
I thank Jeremy Clark for many useful suggestions that improve the presentation of this paper, and I thank Ma\"{e}l Laoufi and Zuodi Xie for the careful reading of the manuscript.
This research is partially supported by the NSF through DMS-2243112 and the Alfred P.\ Sloan Foundation through the Sloan Research Fellowship FG-2022-19308.

\subsection*{Outline}
In Section~\ref{s.tools}, we prepare the notation and tools.
In Section~\ref{s.unique}, we prove Theorem~\ref{t.main.}, which immediately implies Theorem~\ref{t.main}.
In Section~\ref{s.exists}, we prove Proposition~\ref{p.conv}.
The proof of some technical results are placed in the appendix.

\section{Notation and tools}
\label{s.tools}

\subsection{Delta-Bose semigroup}
\label{s.tools.deltabose}
Let us recall the definition and properties of the delta-Bose semigroup $\sg^{\intv{n},\theta}$.
Throughout this paper $\theta$ is fixed, so we will often \emph{drop the dependence on $\theta$ and write $\sg^{\intv{n}}$, etc.}
The semigroup is written as a sum of operators, so we start by introducing the notation for indexing the sum and introducing the operators.
Write $\alpha=ij=\{i<j\}$ for an unordered pair of positive integers and view it as a \emph{set}.
For a finite $\omega\subset \Z_{>0}$, let $\pair(\omega):=\{ij|ij\subset\omega\}$ denote the set of all pairs in $\omega$.
For $\alpha\in\pair(\omega)$, the relevant operators map between functions on 
\begin{align}
	\label{e.xsp}
	\R^{2\omega} 
	&:= (\R^2)^\omega
	:= 
	\big\{ (x_i)_{i\in\omega} \, \big| \, x_i\in\R^2 \big\}
	\ ,
\\
	\label{e.ysp}
	\R^{2}\times\R^{2\omega\setminus\alpha} 
	&:= 
	\R^{2}\times\R^{2(\omega\setminus\alpha)} 
	:=
	\big\{ y=(\yc, (y_i)_{i\in\omega\setminus\alpha}) \, \big| \, \yc,y_i\in\R^2 \big\}
	\ ,
\end{align}
where we index the first coordinate of \eqref{e.ysp} by ``c'' for ``center of mass''.
Consider the map
\begin{align}
	S_{\alpha}: \R^{2}\times\R^{2\omega\setminus\alpha}  \to \R^{2\omega},
	\qquad
	\big( S_{\alpha}y \big)_{i}
	:=
	\begin{cases}
		\yc & \text{when } i\in\alpha
	\\
		y_i & \text{when } i\in\omega\setminus\alpha\ .
	\end{cases}
\end{align}
Let $\hk(t,x):=\exp(-|x|^2/2t)/(2\pi t)$ denote the heat kernel on $\R^2$, where $x\in\R^2$ and $|\cdot|$ denotes the Euclidean norm on $\R^d$. 
Let $\heatsg^{\omega}(t,x):=\prod_{i\in\omega}\hk(t,x_i)$ denote the heat kernel on $\R^{2\omega}$, and let
\begin{align}
	\label{e.jfn}
	\jfn(t)
	=
	\jfn^{\theta}(t)	
	:=
	\int_0^{\infty} \d u \frac{t^{u-1}e^{\theta u}}{\Gamma(u)}\ .
\end{align}
For $\alpha\neq\alpha'\in\pair(\omega)$, define the integral operators $\heatsg^{\omega}_{\alpha}(t)$, $\heatsg^{\omega}_{\alpha}(t)^*$, $\heatsg^{\omega}_{\alpha\alpha'}(t)$, $\Jop^{\omega}_{\alpha}(t)=\Jop^{\omega,\theta}_{\alpha}(t)$ through their kernels as
\begin{subequations}
\label{e.ops}
\begin{align}
	\label{e.incoming}
	\heatsg^\omega_{\alpha}(t,y,x)
	&:=
	\heatsg^{\omega}\big(t, S_{\alpha}y - x \big)
	=:
	\big(\heatsg^\omega_{\alpha}\big)^*(t,x,y)
	\ ,
	&&
	x\in \R^{2\omega}\ ,
	y\in \R^{2}\times\R^{2\omega\setminus\alpha}\ ,
\\
	\label{e.swapping}
	\heatsg^\omega_{\alpha\alpha'}(t,y,y')
	&:=
	\heatsg^{\omega}\big(t, S_{\alpha}y - S_{\alpha'}y' \big)\ ,
	&&
	\hspace{-40pt}
	y\in \R^{2}\times\R^{2\omega\setminus\alpha}\ ,
	y'\in \R^{2}\times\R^{2\omega\setminus\alpha'}\ ,
\\
	\label{e.Jop}
	\Jop^{\omega}_{\alpha}(t,y, y')
	&:=
	4\pi\,\jfn(t) \, \hk(\tfrac{t}{2},\yc-\yc') \cdot \prod_{i\in\omega\setminus\alpha} \hk(t,y_i-y'_i)\ ,
	&&
	y,y'\in \R^{2}\times\R^{2\omega\setminus\alpha}\ .
\end{align}
\end{subequations}

These operators enjoy a few bounds.
For $p\in(1,\infty)$ and $a\in[0,\infty)$, let $|\cdot|_1$ denote the $\ell^1$ norm on $\R^d$ and consider the exponentially weighted $\Lsp^p$ norm
\begin{align}
	\label{e.weightednorm}
	\norm{f}_{p,a}^p
	&=
	\norm{f}_{\Lsp^{p,a}(\Omega)}^p
	:=
	\begin{cases}
		\int_{\Omega} \d x \, \big| f(x)e^{a|x|_1} \big|^{p}
		&\text{when } 
		\Omega = \R^{2\omega}\ ,
	\\
		\int_{\Omega} \d y \, \big| f(y)e^{2a|\yc|_1+a\sum_{i\in\omega\setminus\alpha}|y_i|_1} \big|^{p}
		&\text{when } 
		\Omega = \R^{2}\times\R^{2\omega\setminus\alpha}\ .
	\end{cases}	
\end{align}
For functions on $\R^2\times\R^{2\omega\setminus\alpha}$, we assign twice the exponential weight to $\yc$, which is natural because it bears a meaning of ``merging'' the two coordinates in $\alpha$.
Accordingly, for an operator $\Top$ that maps between functions on the spaces \eqref{e.xsp}--\eqref{e.ysp}, let
\begin{align}
	\label{e.operatornorm}
	\norm{\Top}_{p,a\to p',a'}
	:=
	\sup\big\{ \ip{f',\Top f} \, \big| \, \norm{f'}_{q',a'} \leq 1 \, , \norm{f}_{p,a}\leq 1 \big\}
	\ ,
\end{align}
where $1/p'+1/q'=1$.
We will mostly omit the underlying spaces $\Omega$ when writing the norms, because the spaces can be read off from the definition of the operators.
For example, referring to \eqref{e.incoming}, we see that 
$
	\norm{\heatsg^{\omega}_{\alpha}(t)}_{p,a\to p',a'}
	=
	\norm{\heatsg^{\omega}_{\alpha}(t)}_{\Lsp^{p,a}(\R^{2\omega})\to \Lsp^{p',a'}(\R^2\times\R^{2\omega\setminus\alpha})}
$.
Next, note that the kernels in \eqref{e.incoming}--\eqref{e.Jop} are positive.
For a $t$-parameterized integral operator $\Top(t)$ with a nonnegative kernel $\Top(t,z',z)$, define
\begin{align}
	\NOrm{ \int_0^b \d t \, \Top(t) }_{p,a\to p',a'}
	:=
	\sup\Big\{ \int_0^b \d t \int \d z' \d z \ |f'(z')| \cdot \Top(t,z',z) \cdot |f(z)| \Big\}
	\ ,
\end{align}
where the supremum runs over $\norm{f'}_{q',a}\leq1$ and $\norm{f}_{p,a}\leq 1$, with $1/p'+1/q'=1$.
If this quantity is finite, the operator $\int_0^b \d t\,\Top(t)$ is well-defined and bounded.
When $a=0$, the weighted norm $\norm{\cdot}_{p,a}$ reduces to the usual $\Lsp^p$ norm, which we write as $\norm{\cdot}_{p,0}=\norm{\cdot}_{p}$.
Hereafter, write $c=c(v_1,v_2,\ldots)$ for a general, finite, positive, deterministic constant that may change from place to place but depends only on the designated variables $v_1,v_2,\ldots$.
Recall that $\theta$ is fixed throughout the paper, and we (often) drop the dependence on $\theta$.
Hence, \emph{every constant in this paper may implicitly depend on $\theta$.}
Given any $p\in(1,\infty)$, $a\in[0,\infty)$, and finite $\omega\subset\Z_{>0}$, there exists $c=c(|\omega|,p,a)=c(\theta,|\omega|,p,a)$ such that for all $t>0$ and $\alpha\neq\alpha'\in\pair(\omega)$ and $1/p+1/q=1$,
\begin{subequations}
\label{e.bds}
\begin{align}
	\label{e.bd.incoming}
	\Norm{ \heatsg^\omega_{\alpha}(t) }_{p,a\to p,a}
	&\leq
	c \, t^{-1/p}\ ,
	\quad
	\Norm{ \heatsg^\omega_{\alpha}(t)^* }_{p,a\to p,a}
	\leq
	c \, t^{-1/q}\ ,
\\
	\label{e.bd.swapping}
	\Norm{ \heatsg^\omega_{\alpha\alpha'}(t) }_{p,a\to p,a}
	&\leq
	c \, t^{-1}\ ,
\\
	\label{e.bd.swapping.int}
	\NOrm{ \int_0^\infty \d t\, \heatsg^\omega_{\alpha\alpha'}(t) }_{p,a\to p,a}
	&\leq
	c\ ,
\\
	\label{e.bd.Jop}
	\Norm{ \Jop^\omega_{\alpha}(t) }_{p,a\to p,a}
	&\leq
	c \, t^{-1} \big|\log\big(t\wedge \tfrac{1}{2}\big)\big|^{-2}\, e^{c\,t} \ .
\end{align}
\end{subequations}
We give a proof of \eqref{e.bds} in the appendix based on \cite{gu2021moments}, \cite[Lemma 6.8]{caravenna2023critical}, and \cite{surendranath2024two}.
In particular, \eqref{e.bd.swapping.int} essentially follows from \cite[Lemma 6.8]{caravenna2023critical}, and the comparison argument from \cite{surendranath2024two} reduces \eqref{e.bds} to its $a=0$ counterpart.
Let us state a few more technical bounds, which will be used later and are also proven in the appendix.
With $1/p+1/q=1$ and $\alpha\in\pair(\omega)$,
\begin{subequations}
\label{e.tech}
\begin{align}
	\label{e.heatcontract}
	\norm{\heatsg^{\omega}(t)}_{p\to p} &\leq 1\ ,
\\
	\label{e.L2<Lpa}
	\norm{f}_{L^2(\R^{2\omega})}
	&\leq
	c(|\omega|,p,a)\, \norm{f}_{L^{p,a}(\R^{2\omega})}\ ,
	&&
	p\in(2,\infty), a\in(0,\infty)\ ,
\\
	\label{e.1/p.1}
	\int_{\R^2} \d x \, \big| \hk(t,y-x)\, \psi(x) \big|
	&\leq 
	c(p)\, t^{-1/p} \, \norm{\psi}_{\Lsp^{p}(\R^2)}\ ,
	&&
	p\in (1,\infty)\ , 
\\
	\label{e.1/p.3}
	\Norm{ \heatsg^{\omega}_{\alpha}(t)^* }_{q\to q }
	=
	\Norm{ \heatsg^{\omega}_{\alpha}(t) }_{p\to p}
	&\leq 
	c(p) t^{-1/p} \ ,
	&&
	p\in (1,\infty)\ ,
\\
	\label{e.1/p.2}
	\Norm{ \heatsg^{\omega}_{\alpha}(t)^* }_{2\to q,a }
	=
	\Norm{ \heatsg^{\omega}_{\alpha}(t) }_{p,a\to 2}
	&\leq 
	c(|\omega|,p,a) t^{-1/p} \ ,
	&&
	p\in(2,\infty), a\in(0,\infty)\ .
\end{align}
\end{subequations}

We now introduce the delta-Bose semigroup.
Let
\begin{align}
	\label{e.dgm}
	\dgm(\omega)
	:=
	\big\{ \vecalpha=(\alpha_k)_{k=1}^m\in\pair(\omega)^m \, \big| \, m\in\Z_{>0}, \alpha_{k}\neq\alpha_{k+1} \text{ for } k=1,\ldots, m-1 \big\}
	\ .
\end{align}
This set indexes certain diagrams, hence the name $\dgm$; see \cite[Section~2]{gu2021moments}.
Write $|\vecalpha|:=m$ for the length of $\vecalpha\in\dgm(\omega)$.
For $f=f(\tau,\tau',\tau'',\ldots)$ that depends on finitely many nonnegative $\tau$s, write $\int_{\Sigma(t)}\d \vectau f = \int_{\tau+\tau'+\ldots=t} \d \vectau f$ for the convolution-like integral.
For $\vecalpha\in\dgm(\omega)$, let
\begin{align}
	\label{e.sgsum}
	\sgsum^{\omega}_{\vecalpha}(t)
	&:=
	\int_{\Sigma(t)} \d \vectau \
	\heatsg^{\omega}_{\alpha_{1}}(\tau_{1/2})^*
	\prod_{k=1}^{|\vecalpha|-1} \Jop^{\omega}_{\alpha_{k}}(\tau_{k}) \, \heatsg^{\omega}_{\alpha_{k}\alpha_{k+1}}(\tau_{k+1/2}) \cdot
	\Jop^{\omega}_{\alpha_{|\vecalpha|}}(\tau_{|\vecalpha|}) \, \heatsg^{\omega}_{\alpha_{|\vecalpha|}}(\tau_{|\vecalpha|+1/2})\ .
\end{align}
Hereafter, products of operators are understood in the written order, so $\prod_{\ell=1}^{k}\Top_\ell := \Top_1 \Top_2 \cdots \Top_k$.
The delta-Bose semigroup on $\R^{2\omega}$ is
\begin{align}
\label{e.sg}
	\sg^{\omega}(t)
	=
	\sg^{\omega,\theta}(t)
	:=
	\heatsg^{\omega}(t)
	+
	\sum_{\vecalpha\in\dgm(\omega)} 
	\sgsum^{\omega}_{\vecalpha}(t)\ .
\end{align}
Specializing to $\omega=\intv{n}:=\{1,\ldots,n\}$ gives $\sg^{\intv{n}}(t)=\sg^{\intv{n},\theta}(t)$.

To ensure the convergence of \eqref{e.sg}, let us prepare a lemma.
This lemma follows from the proof of \cite[Lemma~8.10]{gu2021moments}, and we give a proof in the appendix for the sake of completeness.
\begin{lem}\label{l.sum}
For $m\in\Z_{>0}$ and $\kappa\in (\frac{1}{2}\Z)\cap(0,m+1)$, let $\Top_{\kappa}(t):\Bsp_{\kappa} \to \Bsp_{\kappa-1/2}$ be a bounded operator with a nonnegative kernel, where $\Bsp_{\kappa}$ is a Banach space consisting of some Borel functions on $\R^{d_\kappa}$, and let $\normop{\Top_{\kappa}(t)}$ denote the operator norm.
Assume that, for some $b,b'\in(-1,0]$, $c_0\in(0,\infty)$, and for all $t>0$,
\begin{align}
	\label{e.l.sum.bd}
	\Normop{ \Top_{\kappa}(t) }
	&\leq
	c_0 e^{c_0t}
	\cdot
	\begin{cases}
		t^{b} & \text{when } \kappa = \tfrac{1}{2} \ , 
	\\
		t^{-1}\, \big|\log(\tfrac{1}{2}\wedge t)\big|^{-2} & \text{when } \kappa \in \Z\cap [1,m] \ , 
	\\
		t^{-1} & \text{when } \kappa \in (\tfrac{1}{2}+\Z)\cap(1,m) \ , 
	\\
		t^{b'} & \text{when } \kappa = m+ \tfrac{1}{2} \ , 
	\end{cases}
\\
	\label{e.l.sum.bdint}
	\NOrmop{ \int_0^{\infty} \d t \, e^{-c_0t}\Top_{\kappa}(t) }
	&\leq
	c_0
	\text{ when } \kappa \in (\tfrac{1}{2}+\Z)\cap(1,m) \ .
\end{align}
Fix $c_1 \geq 0$ and let $\Top'_{\kappa}(t):=c_1\delta_{0}(t)\ind+\Top_{\kappa}(t)$ when $\kappa\in\Z\cap[1,m]$ and $\Top'_{\kappa}(t):=\Top_{\kappa}(t)$ when $\kappa\in(\frac12+\Z)\cap(0,m+1)$.
Then, there exists a universal $c\in(0,\infty)$, such that for all $m\in\Z_{>0}$, $t>0$, and $\lambda \geq c_0+2$,
\begin{enumeratemine}
\item \label{l.sum.1}
$
	\displaystyle
	\NOrmop{ \int_{\Sigma(t)} \d \vectau \, \prod_{k=1}^{2m+1} \Top'_{k/2}(\tau_{k/2}) }
	\leq
	\frac{c^m\, c_0^{m+1}\, m^3 \, t^{1+b+b'}\, e^{\lambda t}}{(b+1)\wedge(b'+1)\wedge 1}
	\Big( c_1^m+c_0\,\Big(c_1+\frac{c_0}{\log(\lambda-c_0-1)}\Big)^{m-1} \Big)\ ,
$
\item \label{l.sum.2}
$
	\displaystyle
	\NOrmop{ \int_{\Sigma(t)} \d \vectau \, \prod_{k=2}^{2m+1} \Top'_{k/2}(\tau_{k/2}) }
	\leq
	\frac{c^m\, c_0^{m} m^3 \, t^{b'}\, e^{\lambda t}}{(b'+1)\wedge 1}
	\Big( c_1^m+ c_0\,\Big(c_1+\frac{c_0}{\log(\lambda-c_0-1)}\Big)^{m-1} \Big)\ .
$
\end{enumeratemine}
\end{lem}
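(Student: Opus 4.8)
The plan is to set up the integral over the simplex $\Sigma(t)$ by peeling off the time variables one at a time, absorbing the factor $e^{c_0 t}$ first via the bound $\Normop{\Top_\kappa(t)} \le c_0 e^{c_0 t}\cdot(\text{power of }t)$, and tracking how the logarithmic factors and the $\delta_0$ atoms combine. The first move is to use $e^{c_0\tau}\le e^{\lambda\tau}$ termwise and factor $e^{\lambda t}=\prod_k e^{\lambda \tau_{k/2}}$ across the simplex, so that it suffices to bound the simplex integral of $\prod_k e^{-(\lambda-c_0)\tau_{k/2}}\cdot(\text{power/log of }\tau_{k/2})$. This turns the problem into estimating a convolution of $2m+1$ (or $2m$) one-dimensional kernels against each other. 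For the half-integer indices $\kappa\in(\frac12+\Z)\cap(1,m)$, which carry the non-integrable $t^{-1}$ singularity, the point is that I should \emph{not} convolve them as $t^{-1}$; instead I use hypothesis~\eqref{e.l.sum.bdint}, i.e.\ $\NOrmop{\int_0^\infty e^{-c_0 t}\Top_\kappa(t)}\le c_0$, to integrate each such factor out entirely, trading it for the constant $c_0$. The integer-index factors $\Top'_{\kappa}(t)=c_1\delta_0(t)\ind+\Top_\kappa(t)$ each contribute either a $c_1$ (when the $\delta_0$ is chosen) or, when we take the $\Top_\kappa(t)$ piece, a factor of order $c_0\int_0^\infty e^{-(\lambda-c_0)\tau}\tau^{-1}|\log(\tfrac12\wedge\tau)|^{-2}\,\d\tau$, which is finite and of size $O(c_0/\log(\lambda-c_0-1))$ for $\lambda\ge c_0+2$ — this is exactly where the $1/\log(\lambda-c_0-1)$ in the statement comes from. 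Expanding the product $\prod(c_1\delta_0+\Top_\kappa)$ over the $m$ integer slots produces a binomial-type sum whose value is $\big(c_1+\tfrac{c_0}{\log(\lambda-c_0-1)}\big)^{m-1}$ times the appropriate leading factor, plus the pure-$\delta_0$ term $c_1^m$; combining with the two boundary factors ($t^b$ at $\kappa=\tfrac12$, $t^{b'}$ at $\kappa=m+\tfrac12$) gives the stated $t^{1+b+b'}$ in part~\ref{l.sum.1} and $t^{b'}$ in part~\ref{l.sum.2} (where the first boundary factor is absent).

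More precisely, I would prove this by induction on $m$, or equivalently by a direct Fubini/Beta-function computation on the ordered simplex. The cleanest route: enumerate which integer slots use the $\delta_0$ atom; if a block of consecutive slots between two ``active'' kernels is collapsed by $\delta_0$'s, the adjacent half-integer kernels still get integrated out via~\eqref{e.l.sum.bdint} (the $\delta_0$ just removes a time variable, contributing $c_1$ per collapsed slot and not affecting integrability), while each active integer kernel contributes the $O(c_0/\log(\lambda-c_0-1))$ factor. The remaining genuinely convolved factors are only the two boundary kernels $t^b$ and $t^{b'}$, whose simplex convolution is the Beta integral $\int_{\Sigma(t)}\d\vectau\,\tau_1^{b}\tau_{last}^{b'}\le c\,t^{1+b+b'}/((b+1)\wedge(b'+1))$ after using $\lambda\ge c_0+2$ to dominate the intermediate exponential weights (this is where the denominator $(b+1)\wedge(b'+1)\wedge1$ enters). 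The combinatorial factor: there are at most $2^m$ choices of which slots are $\delta_0$, but more carefully the count of diagrams times the per-term bound yields the polynomial $m^3$ and the geometric $c^m$; I would extract $m^3$ from the number of ways the boundary/intermediate structure can arise and from crude bounds on multinomial coefficients, exactly as in the proof of \cite[Lemma~8.10]{gu2021moments}.

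The main obstacle is the bookkeeping around the integer-index slots with the $\delta_0$ atoms: one must verify that interleaving $\delta_0$-collapsed slots with $\Top_\kappa$-active slots never destroys the integrability supplied by~\eqref{e.l.sum.bdint} for the half-integer kernels, and that the resulting product of per-slot bounds telescopes to $\big(c_1+\tfrac{c_0}{\log(\lambda-c_0-1)}\big)^{m-1}$ rather than to something with a worse exponent or a lost logarithm. A secondary technical point is making the $e^{\lambda t}$ extraction uniform: since there are $2m+1$ factors, naively splitting $e^{c_0 t}$ across them costs nothing, but one must ensure the leftover weights $e^{-(\lambda-c_0)\tau_{k/2}}$ are genuinely integrable against the $t^{-1}|\log|^{-2}$ singularities with a constant that is \emph{summable in $m$} after multiplying by $c^m$ — this forces the requirement $\lambda\ge c_0+2$ and is why the bound degrades (through $1/\log(\lambda-c_0-1)$) but does not blow up. Once these two points are handled, assembling parts~\ref{l.sum.1} and~\ref{l.sum.2} is routine, with part~\ref{l.sum.2} being the same computation minus the initial $\Top'_{1/2}$ factor, hence without the $t^{1+b}$ improvement at the left endpoint.
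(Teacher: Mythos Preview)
Your proposal assembles the right ingredients---expanding over which integer slots take the $\delta_0$ atom, factoring $e^{\lambda t}=\prod_\kappa e^{\lambda\tau_\kappa}$ across the simplex, invoking~\eqref{e.l.sum.bdint} for the half-integer interior factors, and the estimate $\int_0^\infty e^{-(\lambda-c_0)\tau}\tau^{-1}|\log(\tfrac12\wedge\tau)|^{-2}\,\d\tau=O(1/\log(\lambda-c_0-1))$ for the integer ones---but the way you combine them has a gap. You write that after ``integrating out'' all interior factors you are left with just the two boundary kernels and a Beta integral $\int_{\Sigma(t)}\tau_{1/2}^{\,b}\tau_{m+1/2}^{\,b'}$. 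This picture is not justified: the simplex constraint $\sum_\kappa\tau_\kappa=t$ couples all the time variables, so you cannot integrate the interior $\tau_\kappa$'s over $[0,\infty)$ independently while keeping the boundary pair on a simplex of total length $t$. Concretely, the only direct way to pass from the simplex integral to a product of one-variable integrals is to bound $\Normop{\int_{\Sigma(t)}\prod_\kappa S_\kappa(\tau_\kappa)}\le\int_{\Sigma(t)}\prod_\kappa\normop{S_\kappa(\tau_\kappa)}$, and then the half-integer interior contribution is the scalar $\tau^{-1}$, which diverges; the operator bound~\eqref{e.l.sum.bdint} does not translate into a pointwise scalar bound you can feed into a Beta integral.

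The paper's proof supplies the missing device: a pigeonhole. On $\Sigma(t)$ at least one coordinate satisfies $\tau_\kappa\ge t/(2m+1)$, so one splits $\Sigma(t)\subset\bigcup_\kappa\{\tau_\kappa\ge t/(2m+1)\}$ and, on each piece, bounds the singled-out factor by $\sup_{\tau\in[t/(2m+1),t]}e^{-\lambda\tau}\normop{\Top_\kappa(\tau)}$ (a scalar, carrying the explicit $t$-power) while every remaining factor is bounded by the operator norm of its full time integral $\Normop{\int_0^t e^{-\lambda s}\Top_{\kappa'}(s)\,\d s}$. This is precisely what makes~\eqref{e.l.sum.bdint} usable: the half-integer interior factors appear only through $\Normop{\int_0^\infty e^{-\lambda s}\Top_{\kappa'}(s)\,\d s}\le c_0$, never through a divergent scalar convolution. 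The power $t^{1+b+b'}$ then comes not from a two-variable Beta integral but from multiplying the sup bound (one of $t^b$, $t^{-1}$, $t^{b'}$, up to the factor $2m+1$) against the integrated boundary contributions $t^{b+1}/(b+1)$ and $t^{b'+1}/(b'+1)$; summing over the $\le 2m+1$ choices of the large coordinate produces the polynomial prefactor. Once you insert this pigeonhole step, the rest of your bookkeeping (the binomial expansion over $\delta_0$ slots, the $(c_1+c_0/\log(\lambda-c_0-1))^{m-1}$ factor) goes through essentially as you describe.
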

\noindent%
Combining \eqref{e.bds} and Lemma~\ref{l.sum}\ref{l.sum.1} for $c_1=0$ and a large enough $\lambda$ gives that, for every $p\in(1,\infty)$ and $a\in[0,\infty)$, the sum in \eqref{e.sg} converges absolutely in the $p,a\to p, a$ operator norm, with
\begin{align}
	\label{e.sg.bd}
	\Norm{\sg^{\intv{n}}(t)}_{p,a\to p,a}
	\leq
	c \, e^{ct}\ ,
	\qquad
	c=c(n,p,a)\ .
\end{align}
To reiterate, the constant \emph{actually depends on $\theta$, but we omit the dependence}.
It is proven in \cite{gu2021moments} that on $\Lsp^{2}(\R^{2\intv{n}})$, $\sg^{\intv{n}}(t)$ is the semigroup generated by a self-adjoint operator whose spectrum is bounded from above.
Hence, $\sg^{\intv{n}}(t)$ is strongly continuous on $\Lsp^{2}(\R^{2\intv{n}})$, meaning $\sg^{\intv{n}}(t)f\to f$ in $\Lsp^2(\R^{2\intv{n}})$ as $t\to 0$, for every $f\in\Lsp^{2}(\R^{2\intv{n}})$.

Next, we consider the moments of the increment $W:=Z-\E Z$.
First, since $\dgm\intv{1}=\emptyset$ (see \eqref{e.dgm}), $\sg^{\intv{1}}(t-s)=\heatsg^{\intv{1}}(t-s)=\hk(t-s)$, and any SHF $Z$ has its expectation $\E[Z_{s,t}\, g\otimes g']=\ip{g,\hk(t-s)g'}$ given by the heat kernel.
Hence, letting
\begin{align}
	\label{e.heatm}
	\heatm \in \Csp(\Rtwo,\Msp(\R^4)),
	\qquad
	\heatm_{s,t} \, f := \int_{\R^4} \d x \d x' \, \hk(t-s,x-x') f(x,x') \ ,
\end{align}
we have $W=Z-\heatm$.
By Definition~\ref{d.shf}\ref{d.shf.mome}, for $n=1,\ldots,4$,
\begin{align}
	\label{e.Wmome}
	\E\Big[\prod_{i=1}^n  W_{s,t} \ g_i\otimes g'_i  \Big]
	&=
	\IP{ \bigotimes_{i=1}^n g_i, \sgg^{\intv{n}}(t-s) \bigotimes_{i=1}^n g'_i }\ ,
\\
	\label{e.sgg.}
	\sgg^{\intv{n}}(t)
	&:=
	\sum_{\omega\subset\intv{n}}
	\sg^{\omega}(t) \otimes (-1)^{n-|\omega|}\heatsg^{\intv{n}\setminus\omega}(t)\ .
\end{align}
A more tractable expression of $\sgg^{\intv{n}}(t)$ is as follows, which is proven it in the appendix:
\begin{align}
	\label{e.sgg}
	\sgg^{\intv{n}}(t)
	=
	\sum_{\vecalpha\in\dgm_*\intv{n}} \sgsum^{\intv{n}}_{\vecalpha}(t)\ ,
	\quad
	\dgm_*(\omega)
	:=
	\big\{ \vecalpha\in\dgm(\omega) \, \big| \, \alpha_1 \cup \ldots \cup \alpha_{|\alpha|}=\omega \big\}
	\ .
\end{align}

\subsection{Chapman--Kolmogorov equation and polymer measure}
\label{s.tools.ck}
Our main goal here is to upgrade the Chapman--Kolmogorov equation in Definition~\ref{d.shf}\ref{d.shf.ck} to $\Lsp^{\barn}$ convergence and to allow \emph{any} $\Lsp^2$-applicable mollifier.
The result is stated in Corollary~\ref{c.mome}\ref{c.mprod.ck}.
In the process of doing so, we also obtained the finite-dimensional distributions of the polymer measure associated with $Z$.
As mentioned previously, the polymer measure has been constructed in \cite{clark2024continuum} as a limit of the noise-mollified SHE.

Let us prepare a metric for the vague topology.
Let $\Csp_{R}(\R^2)$ denote the set of continuous functions on $\R^2$ that are supported in $\{x\in\R^2\, | \,|x|<R\}$ and fix a countable $\metricS\subset\Ccsp^2(\R^2)$ such that
\begin{subequations}
\label{e.metricS}
\begin{align}
	\label{e.metricS.approx}
	&\text{for every } r<\infty, \text{ there exists an } R<\infty \text{ such that } \metricS\cap\Csp_{R}(\R^2) \text{ is dense in } \Csp_r(\R^2) \ ,
\\
	\label{e.metricS.bdd}
	&\text{for every } R<\infty, \text{ there exists a nonnegative } g\in\metricS \text{ such that } \inf\{ g(x)|\, |x|< R\} >0 \ .
\end{align}
\end{subequations}
Fix an enumeration $\metricS=\{g_{\metricS,1},g_{\metricS,2},\ldots\}$ of $\metricS$ and let
\begin{align}
	\label{e.metric}
	\metric(\mu,\nu)
	:=
	\sum_{i_1=1}^\infty \cdots \sum_{i_K=1}^{\infty}
	2^{-i_1-\cdots-i_K} \wedge \big| (\mu-\nu) g_{\metricS,i_1}\otimes\cdots\otimes g_{\metricS,i_K} \big|\ ,
	\quad
	\mu,\nu\in\Msp(\R^{2K})\ .
\end{align}
Given \eqref{e.metricS}, using the Banach--Alogulu Theorem and \cite[3.8(a)]{rudin91}, one can check that $\metric$ metrizes the vague topology on $\Msp(\R^{2K})$, and using the Riesz--Markov--Kakutani representation theorem, see \cite[Theorems~2.14]{rudin1986real}, one can check that $\metric$ is complete.

To study the Chapman--Kolmogorov equation, similarly to \cite[Sections~2.2--2.3]{clark2024continuum}, we start by defining certain products in $\Msp(\R^4)$.
\begin{defn}\label{d.mproduct}
Let $\mu_1,\ldots,\mu_{K}\in\Msp(\R^4)$ and let $\molli^1,\ldots,\molli^{K-1}\in\Lsp^2(\R^2)\cap C(\R^2)$ be nonnegative.
We call $(\mu_1 \Mprod_{\molli^1} \cdots \Mprod_{\molli^{K-1}} \mu_{K})$ well-defined, if, for every $f\in\Ccsp(\R^{2\{0,\ldots,K\}})$, the integral
\begin{align}
\label{e.d.Mproduct}
\begin{split}
	\big( &\mu_1 \Mprod_{\molli^1} \cdots \Mprod_{\molli^{K-1}} \mu_{K} \big) f
\\
	&:=
	\int_{\R^{4K}} 
	\mu_1(\d x_0\otimes \d x_1) \prod_{k=2}^{K} \molli^{k-1}(x_{k-\frac{1}{2}}-x_{k-1}) \mu_{k}(\d x_{k-\frac{1}{2}}\otimes \d x_{k}) 
	\cdot
	f(x_0,\ldots,x_K) 
\end{split}
\end{align}
converges absolutely.
In this case, by the Riesz--Markov--Kakutani theorem, the functional \eqref{e.d.Mproduct} defines an element of $\Msp(\R^{2K+2})$.

Next, take any $I:=\{k_1<\ldots<k_{|I|}\}\subset \{1,\ldots,K-1\}$.
Call 
$
	(\mu_1 \Mprod_{\molli^1} \cdots \mprod_{\molli^{k_1}} \mu_{k_1}  \cdots \mprod_{\molli^{k_2}}\mu_{k_2} \cdots \mu_{K})
$
well-defined, if, for every $f\in\Ccsp(\R^{2(\{0,\ldots,K\}\setminus I)})$, the integral
\begin{align}
\label{e.d.mproduct}
\begin{split}
	\big( &\mu_1 \Mprod_{\molli^1} \cdots  \mu_{k_1} \mprod_{\molli^{k_1}}  \cdots \mu_{k_2} \mprod_{\molli^{k_2}}  \cdots \mu_{K} \big) f
\\
	&:=
	\int_{\R^{4K}} 
	\mu_1(\d x_0\otimes \d x_1) \prod_{k=2}^{K} \molli^{k-1}(x_{k-\frac{1}{2}}-x_{k-1}) \mu_{k}(\d x_{k-\frac{1}{2}}\otimes \d x_{k}) 
	\cdot
	f(\{x_k\}_{k\in \{0,\ldots,K\}\setminus I}) 
\end{split}
\end{align}
converges absolutely.
In this case, by the Riesz--Markov--Kakutani theorem, the functional \eqref{e.d.mproduct} defines an element of $\Msp(\R^{2(\{0,\ldots,K\}\setminus I)})$, which is the marginal of $\mu_1 \Mprod_{\molli^1} \cdots \Mprod_{\molli^{K-1}} \mu_{K}$.
\end{defn}

We now construct these products for $Z$.
Since the results in this section will be used in the proof of Theorem~\ref{t.main}, we make minimal assumptions on $Z$.
The assumptions in the next lemma are satisfied by any SHF($\theta$) for $Z_k := Z_{t_{k-1},t_{k}}$.
\begin{lem}\label{l.product.well}
Take any $\barn\in\Z_{>0}$, $\tset=\{t_1<\ldots<t_{|\tset|}\}$, and $t_0\in(-\infty,t_1)$, let $\tset':=\{t_0<t_1<\ldots<t_{|\tset|}\}$, and let $Z_1,\ldots,Z_{|\tset|}$ be independent $\Msp(\R^4)$-valued random variables such that, for $n=1,\ldots,\barn$, $k=1,\ldots,|\tset|$, and $g_i,g'_i\in\Lsp^2(\R^2)$,
\begin{align}
	\label{e.mome.product}
	\E\Big[\prod_{i=1}^n  Z_{k} \ g_i\otimes g'_i  \Big]
	=
	\IP{ \bigotimes_{i=1}^n g_i, \sg^{\intv{n}}(t_k-t_{k-1}) \bigotimes_{i=1}^n g'_i }
	\ .
\end{align}
Take nonnegative $\molli^1,\ldots,\molli^{|\tset|-1}\in\Lsp^2(\R^2)\cap \Csp(\R^2)$ and $\tsett=\{t_{i_1}<\ldots<t_{i_{|\tsett|}}\}\subset \{t_1,\ldots,t_{|\tset|-1}\}$, and let
\begin{align}
	\label{e.PM.molli}
	\PM_{\tset',\vecmolli} 
	&:= 
	Z_{1} \Mprod_{\molli^1} Z_{2} \Mprod_{\molli^2} \cdots  Z_{|\tset|} \ ,
\\
	\label{e.PMm.molli}
	\PM^{\tsett}_{\tset',\vecmolli} 
	&:= 
	Z_{1} \Mprod_{\molli^1} \cdots  Z_{i_1}\mprod_{\molli^{i_1}}  \cdots 
	Z_{i_2} \mprod_{\molli^{i_2}}  \cdots
	Z_{|\tset|} \ .
\end{align}
Almost surely, $\PM_{\tset',\vecmolli}$ and $\PM^{\tsett}_{\tset',\vecmolli}$ are well-defined.
\end{lem}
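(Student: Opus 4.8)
The plan is to reduce well‑definedness to a finite first‑moment bound, evaluate that moment via the $n=1$ case of \eqref{e.mome.product}, and then estimate the resulting deterministic convolution integral.

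First, every $f\in\Ccsp$ (on whichever Euclidean space is relevant) satisfies $|f|\leq \|f\|_\infty \bigotimes_k h_R$ for a fixed nonnegative $h_R\in\Ccsp(\R^2)$ with $h_R\geq \ind_{\{|x|\leq R\}}$ and $R\in\Z_{>0}$ large enough; since the integrands in \eqref{e.d.Mproduct} and \eqref{e.d.mproduct}, with $f$ replaced by $|f|$, are nonnegative, it suffices to show that for every $R\in\Z_{>0}$ the integral defining $\PM_{\tset',\vecmolli}$ (resp.\ $\PM^{\tsett}_{\tset',\vecmolli}$) tested against $\bigotimes_k h_R$ is almost surely finite; intersecting over $R$ then gives the lemma. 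For a fixed $R$, I bound the expectation of this integral: since $Z_1,\ldots,Z_{|\tset|}$ are nonnegative and independent and all kernels are nonnegative, Tonelli moves the expectation inside and factors it over the $Z_k$, and since $\dgm\intv{1}=\emptyset$ the $n=1$ case of \eqref{e.mome.product} gives $\E[Z_k(\d x\otimes \d x')]=\hk(\Delta_k,x-x')\,\d x\,\d x'$ with $\Delta_k:=t_k-t_{k-1}$. Thus the expectation equals an explicit deterministic integral built from the heat kernels $\hk(\Delta_k,\cdot)$, the mollifiers $\molli^k$, and the factors $h_R$.

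For the full product $\PM_{\tset',\vecmolli}$, every integer coordinate carries a factor $h_R$, so integrating out each half‑integer coordinate by Cauchy--Schwarz gives $\int_{\R^2}\molli^{k-1}(x_{k-\frac12}-x_{k-1})\hk(\Delta_k,x_{k-\frac12}-x_k)\,\d x_{k-\frac12}\leq \|\molli^{k-1}\|_{\Lsp^2}\|\hk(\Delta_k,\cdot)\|_{\Lsp^2}<\infty$, a constant, after which a bounded function is integrated over the compact set $\prod_k\supp h_R$; this is finite, proving the first assertion. The marginal $\PM^{\tsett}_{\tset',\vecmolli}$ is the delicate case and the main obstacle: now the removed integer coordinates $x_k$ (those with $t_k\in\tsett$) must be integrated over all of $\R^2$, so bounding the half‑integer integrals by constants is too lossy. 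Instead I read the expectation as a composition of operators on $\Lsp^2(\R^2)$: writing $C_\phi$ for convolution by $\phi$ and $M_v$ for multiplication by $v$, and pairing each mollifier with the adjacent heat kernel, $G_k:=\molli^k\star\hk(\Delta_{k+1},\cdot)$, the expectation takes the form $\ip{h_R,\ C_{\hk(\Delta_1,\cdot)}\big(\prod_k M_{v_k}C_{G_k}\big)h_R}$, where $v_k=h_R$ at a retained coordinate and $v_k\equiv 1$ at a removed one. Each $C_{\hk(\Delta_k,\cdot)}$ has $\Lsp^2$‑operator‑norm $\leq 1$, each $M_{v_k}$ has $\Lsp^2$‑operator‑norm $\leq\|h_R\|_\infty$ (or $1$), and each $C_{G_k}=C_{\molli^k}C_{\hk(\Delta_{k+1},\cdot)}$ is bounded on $\Lsp^2$ because convolution by the mollifier $\molli^k$ is; composing and using Cauchy--Schwarz, the expectation is at most $\|h_R\|_{\Lsp^2}^2$ times a finite product of operator norms, hence finite. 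The crux is precisely the $\Lsp^2$‑boundedness of $C_{\molli^k}$: since the $\molli^k$ need be neither compactly supported nor integrable, one cannot just convolve mollifiers together (such a product need not be locally finite once three or more consecutive internal times are removed), and must instead keep each mollifier grouped with a heat kernel and use the $\Lsp^2$‑mapping property --- which is exactly what makes an $\Lsp^2$‑applicable mollifier applicable, the class the $\molli^k$ belong to in the applications. Finally, finiteness of the expectation for every $R$ yields the required almost‑sure finiteness, and, once both products are known to converge absolutely, the assertion that $\PM^{\tsett}_{\tset',\vecmolli}$ is the marginal of $\PM_{\tset',\vecmolli}$ is one more application of Tonelli.
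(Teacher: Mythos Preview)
Your argument is correct and follows essentially the same route as the paper's: both reduce well-definedness to a moment bound against tensor-product test functions, then read that moment as a composition of bounded operators on $\Lsp^2$ (semigroup, multiplication by bounded functions, convolution by the $\molli^k$). Two differences are worth a remark. First, you use only the first moment ($n=1$), so the heat kernels $\hk(\Delta_k,\cdot)$ appear in place of the paper's $\sg^{\intv{\barn}}$; since $\E[X]<\infty$ already forces $X<\infty$ a.s.\ for nonnegative $X$, this is a legitimate simplification and avoids invoking the full delta-Bose semigroup. Second, you are more careful than the paper on the crux: the paper asserts that $\molli^{k,\star}$ is bounded on $\Lsp^2$ ``because $\molli^k\in\Lsp^2(\R^2)$'', but $\Lsp^2$-membership alone does not guarantee this (the Fourier multiplier need not be bounded). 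You correctly identify that the required boundedness is precisely the extra property in Definition~\ref{d.mproduct.} of an $\Lsp^2$-applicable mollifier, so strictly speaking your proof (and the paper's) covers the lemma only under that additional hypothesis. Since every application of the lemma in the paper draws the $\molli^k$ from such a family, the gap is harmless in practice.
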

\noindent{}Let $\Cbsp(\R^{d})$ denote the space of bounded continuous functions on $\R^{d}$.
\begin{proof}
It suffices to show that, for every $g_0,g_{|\tset|}\in\Ccsp(\R^2)$ and $g_1,\ldots,g_{|\tset|-1}\in\Cbsp(\R^2)$, 
\begin{align}
	\label{e.l.product.well.goal}
	\E\big[ (\PM_{\tset',\vecmolli} \, |g_0|\otimes\cdots \otimes |g_{|\tset|}|)^{\barn}\,\big] <\infty\ .
\end{align}
Once this is done, since $\metricS$ is countable, almost surely the condition holds: $\PM_{\tset,\vecmolli} |g_0|\otimes\cdots \otimes |g_{|\tset|}| < \infty$ for all $g_0,\ldots,g_{|\tset|}\in\metricS$.
This condition together with \eqref{e.metricS.bdd} implies that $\PM_{\tset',\vecmolli} |f| < \infty$ for all $f\in\Ccsp(\R^{2\tset'})$.
This shows that $\PM_{\tset',\vecmolli}$ is almost surely well-defined.
A similarly argument applies to $\PM^{\tsett}_{\tset',\vecmolli}$ by taking $g_k\in\metricS$ for $t_k\in \tset\setminus \tsett$ and $g_k=1$ for $t_k\in \tsett$.
To prove \eqref{e.l.product.well.goal}, use the independence of the $Z_k$s and \eqref{e.mome.product} to write the expectation as
\begin{align}
	\label{e.l.product.well}
	\text{(lhs of \eqref{e.l.product.well.goal})} 
	= 
	\IP{
		|g_0|^{\otimes\barn},
		\sg^{\intv{\barn}}(t_1-t_0) \prod_{k=1}^{|\tset|-1} \, |g_{k}|^{\otimes \barn} \, \molli^{k,\star} {}^{\otimes \barn} \sg^{\intv{\barn}}(t_{k+1}-t_{k})
		\cdot
		|g_{|\tset|}|^{\otimes\barn}
	}\ .
\end{align}
Here $|g_k|$ acts on $\Lsp^2(\R^2)$ as a multiplicative operator, which is a bounded operator because $|g_i|$ is a bounded function.
Next, $\molli^{k,\star}$ acts on $\Lsp^2(\R^2)$ by convolution $\molli^{k,\star}\, f:= \molli^{k}\star f$, which is a bounded operator because $\molli^{k}\in\Lsp^2(\R^2)$.
Hence the right-hand side of \eqref{e.l.product.well} is finite.
\end{proof}

Having defined the products in Lemma~\ref{l.product.well}, we move on to investigating their properties when we take the $\molli^{k}$s as $\Lsp^2$-applicable mollifiers and attenuate the strength of mollification.

\begin{prop}\label{p.mprod}
Under the same setup as Lemma~\ref{l.product.well}, let $\{\molli^k_\ell\}_{\ell}$, $k=1,\ldots,|\tset|-1$, be $\Lsp^2$-applicable mollifiers (Definition~\ref{d.mproduct.}), let $\vecmolli_{\vecell}:=(u^k_{\ell_k})_{k=1,\ldots,|\tset|-1}$, and assume further $\barn\in2\Z_{>0}$.
Then, there exist $(Z_1,\ldots,Z_{|\tset|})$-measurable random variables $\PM_{\tset'}\in\Msp(\R^{2\tset'})$ and $\PM^\tsett_{\tset'}\in\Msp(\R^{2\tset'\setminus \tsett})$ such that, for every $f\in\Ccsp(\R^{2\tset'})$ and $h\in\Ccsp(\R^{2\tset'\setminus \tsett})$,
\begin{align}
	\E\big[\big| \big(\PM_{\tset',\vecmolli_{\vecell}} - \PM_{\tset'}\big) f \big|^{\barn}\big] \ , \
	\E\big[\big| \big(\PM^\tsett_{\tset',\vecmolli_{\vecell}} - \PM^\tsett_{\tset'}\big) h \big|^{\barn} \big]
	\longrightarrow 0\ 
	\text{ as } \vecell\to\vecinfty\ ,
\end{align}
and the limits $\PM_{\tset'}$ and $\PM^\tsett_{\tset'}$ do not depend on the mollifiers.
\end{prop}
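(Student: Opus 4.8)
The plan is to reduce everything to the moment identity \eqref{e.mome.product} by testing against tensor-product functions, and then to assemble the resulting $\Lsp^{\barn}$-limits into a genuine random measure using the countable family $\metricS$.

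\textbf{Step 1 (analytic core, via the moment formula).} Fix a tensor product $f=g_0\otimes\cdots\otimes g_{|\tset|}$ with $g_0,g_{|\tset|}\in\Ccsp(\R^2)$ and $g_1,\dots,g_{|\tset|-1}\in\Cbsp(\R^2)$ (for $\PM^\tsett_{\tset'}$, take $g_k\equiv 1$ whenever $t_k\in\tsett$). Since $\barn$ is even, for two multi-indices $\vecell,\vecell'$ with all components large, $\E\big[((\PM_{\tset',\vecmolli_{\vecell}}-\PM_{\tset',\vecmolli_{\vecell'}})f)^{\barn}\big]=\sum_{\vec\epsilon\in\{\vecell,\vecell'\}^{\barn}}(-1)^{\#\{j:\epsilon_j=\vecell'\}}\,\E\big[\prod_{j=1}^{\barn}\PM_{\tset',\vecmolli_{\epsilon_j}}f\big]$. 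By the independence of $Z_1,\dots,Z_{|\tset|}$ and \eqref{e.mome.product} with $n=\barn$, exactly as in the proof of Lemma~\ref{l.product.well}, each summand equals $\IP{g_0^{\otimes\barn},\ \sg^{\intv{\barn}}(t_1-t_0)\prod_{k=1}^{|\tset|-1}g_k^{\otimes\barn}\big(\bigotimes_{j=1}^{\barn}\molli^{k,\star}_{(\epsilon_j)_k}\big)\sg^{\intv{\barn}}(t_{k+1}-t_k)\cdot g_{|\tset|}^{\otimes\barn}}$, where $\molli^{k,\star}_{m}$ denotes the convolution operator $\phi\mapsto\molli^{k}_{m}\star\phi$ on $\Lsp^2(\R^2)$ and $(\epsilon_j)_k\in\{\ell_k,\ell'_k\}$. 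Here multiplication by $g_k^{\otimes\barn}$ and $\sg^{\intv{\barn}}(t)$ are bounded on $\Lsp^2(\R^{2\barn})$ uniformly in the relevant parameters (by \eqref{e.sg.bd}), and the operators $\molli^{k,\star}_{m}$ are uniformly bounded in $m$ (strong convergence to the identity forces uniform boundedness, by Banach--Steinhaus) and converge strongly to the identity as $m\to\infty$; hence $\bigotimes_{j=1}^{\barn}\molli^{k,\star}_{(\epsilon_j)_k}\to\mathrm{id}_{\Lsp^2(\R^{2\barn})}$ strongly by a $3\e$-argument on elementary tensors. Therefore the entire operator product converges strongly to the same product with the convolution factors deleted, so each of the $2^{\barn}$ summands tends, as $\vecell,\vecell'\to\vecinfty$, to the \emph{single} limit $\IP{g_0^{\otimes\barn},\ \sg^{\intv{\barn}}(t_1-t_0)\prod_k g_k^{\otimes\barn}\sg^{\intv{\barn}}(t_{k+1}-t_k)\cdot g_{|\tset|}^{\otimes\barn}}$; since $\sum_{\vec\epsilon\in\{0,1\}^{\barn}}(-1)^{\#\{j:\epsilon_j=1\}}=(1-1)^{\barn}=0$, the alternating sum vanishes in the limit. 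Thus $\PM_{\tset',\vecmolli_{\vecell}}f$ is Cauchy in $\Lsp^{\barn}(\P)$, and the same computation with $\vecmolli$ replaced by a second sequence of $\Lsp^2$-applicable mollifiers shows the limit is mollifier-independent. Running the computation with $f$ replaced by $|f|$ (no cancellation) gives in addition $\sup_{\vecell}\E[(\PM_{\tset',\vecmolli_{\vecell}}|f|)^{\barn}]<\infty$.

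\textbf{Step 2 (assembling the random measure).} Fix the sequence $\vecell=(m,\dots,m)$. Since $\metricS$ is countable, the family of tensor products $g_0\otimes\cdots\otimes g_{|\tset|}$ with each $g_k\in\metricS$ (and $g_k\equiv 1$ on $\tsett$ for the marginal version) is countable, so a diagonal subsequence extraction yields a full-probability event on which $\PM_{\tset',\vecmolli_{\vecell}}f$ converges for every $f$ in this family. On this event, the limits define a nonnegative linear functional on the span of the family; by \eqref{e.metricS.bdd}, for $f$ supported in $\{|x_k|<R\,\forall k\}$ one has $|\PM_{\tset',\vecmolli_{\vecell}}f|\le c_R\,\norm{f}_\infty\,\PM_{\tset',\vecmolli_{\vecell}}g^{\otimes}$ for a suitable nonnegative $g\in\metricS$ and the appropriate tensor power $g^{\otimes}$, so the limiting functional is bounded on each $\Csp_R$. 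Together with the density \eqref{e.metricS.approx} and the Riesz--Markov--Kakutani theorem \cite[Theorem~2.14]{rudin1986real}, it extends uniquely to $\PM_{\tset'}\in\Msp(\R^{2\tset'})$, which is $(Z_1,\dots,Z_{|\tset|})$-measurable as a pointwise limit; similarly $\PM^\tsett_{\tset'}\in\Msp(\R^{2\tset'\setminus\tsett})$. By the mollifier-independence in Step 1, any two sequences of $\Lsp^2$-applicable mollifiers produce a.s.-equal subsequential limits on the countable family, hence equal measures, so $\PM_{\tset'}$ and $\PM^\tsett_{\tset'}$ do not depend on the mollifiers.

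\textbf{Step 3 (upgrade to $\Lsp^{\barn}$ and to all $f\in\Ccsp$).} For tensor products $f$, the $\Lsp^{\barn}$-Cauchy limit of Step 1 agrees a.s.\ with $\PM_{\tset'}f$ (both are limits in probability along the chosen subsequence). For general $f\in\Ccsp(\R^{2\tset'})$ with support in $\{|x_k|<R\}$, approximate $f$ uniformly on its support by a finite linear combination $\tilde f$ of tensor products of elements of $\metricS$ (possible by \eqref{e.metricS.approx} and Stone--Weierstrass); then, using the domination of Step 2 and $\sup_{\vecell}\E[(\PM_{\tset',\vecmolli_{\vecell}}g^{\otimes})^{\barn}]<\infty$, $\E[|(\PM_{\tset',\vecmolli_{\vecell}}-\PM_{\tset',\vecmolli_{\vecell'}})f|^{\barn}]^{1/\barn}\le\E[|(\PM_{\tset',\vecmolli_{\vecell}}-\PM_{\tset',\vecmolli_{\vecell'}})\tilde f|^{\barn}]^{1/\barn}+c_R\,\norm{f-\tilde f}_\infty$, so $\PM_{\tset',\vecmolli_{\vecell}}f$ is $\Lsp^{\barn}$-Cauchy for every $f\in\Ccsp(\R^{2\tset'})$, and its $\Lsp^{\barn}$-limit is forced to be $\PM_{\tset'}f$. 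The identical argument, carrying $g_k\equiv 1$ on the slots $t_k\in\tsett$ and testing against $h\in\Ccsp(\R^{2\tset'\setminus\tsett})$, handles $\PM^\tsett_{\tset'}$.

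\textbf{Main obstacle.} Step 1 — the vanishing of the $\barn$-th moment of the difference — is a clean consequence of the moment formula together with the strong convergence of the tensor-power convolution operators and the uniform bound \eqref{e.sg.bd}, and is where the hypothesis $\barn\in2\Z_{>0}$ is essential (it lets us equate $|\cdot|^{\barn}$ with the $\barn$-th power and expand into computable moments). The genuinely delicate part is Step 2: converting the family of $\Lsp^{\barn}$-limits, indexed by tensor-product test functions, into a \emph{single} $\Msp$-valued random variable — i.e., simultaneous a.s.\ convergence along one subsequence, local finiteness and continuity of the limiting functional via \eqref{e.metricS.bdd} and \eqref{e.metricS.approx}, and measurability — and then reconciling in Step 3 the in-probability vague limit with the $\Lsp^{\barn}$ limit for all compactly supported continuous test functions.
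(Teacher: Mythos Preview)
Your proof is correct and takes essentially the same approach as the paper: the analytic core (your Step~1) is identical---expand the even power, evaluate each term via independence and the moment formula~\eqref{e.mome.product}, and observe that all terms share a common limit so the alternating sum vanishes. The paper compresses your Steps~2--3 into the single clause ``Given~\eqref{e.metricS} and that $\metric$ is complete, it suffices to prove~\eqref{e.p.mprod.goal}'' and works with two mollifier families $\{\molli^k_\ell\}$, $\{\mollii^k_\ell\}$ simultaneously from the outset (so that Cauchy-ness and mollifier-independence are handled in one stroke), but these are cosmetic differences.
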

\begin{proof}
Let $\{\mollii^1_\ell\},\ldots,\{\mollii^k_\ell\}$ also be $\Lsp^2$-applicable mollifiers.
Given \eqref{e.metricS} and that $\metric$ is complete, it suffices to prove that, for every $g_{0},g_{|\tset|}\in\Ccsp(\R^2)$ and every $g_1,\ldots,g_{|\tset|-1}\in\Cbsp(\R^2)$,
\begin{align}
	\label{e.p.mprod.goal}
	\E\big[ \big|(\PM_{\tset',\vecmolli_{\vecell}} - \PM_{\tset',\vecmollii_{\vecell'}} )\, g_{0}\otimes\cdots\otimes g_{|\tset|} \big|^{\barn} \big]
	\longrightarrow 0 
	\quad \text{ as } (\vecell,\vecell') \to (\vecinfty,\vecinfty)
	\ .
\end{align}
To prove \eqref{e.p.mprod.goal}, with $\barn$ being even, remove the absolute value and expand the expectation into
\begin{align}
	\label{e.p.mprod.exp}
	\E\Big[ \big( (\PM_{\tset',\vecmolli_{\vecell}} - \PM_{\tset',\vecmollii_{\vecell'}} )\, g_{0}\otimes\cdots\otimes g_{|\tset|} \big)^{\barn} \Big]
	=
	\sum_{j=0}^{\barn}
	\binom{\barn}{j} (-1)^{j} A_{j,\vecell,\vecell'}\ ,
\end{align}
where
$
	A_{j,\vecell,\vecell'}
	:=
	\E[  
		( \PM_{\tset',\vecmolli_\ell} \, g_{0}\otimes\cdots\otimes g_{|\tset|} )^{\barn-j}
		( \PM_{\tset',\vecmollii_{\ell'}} \, g_{0}\otimes\cdots\otimes g_{|\tset|} )^{j}
	]
$.
Use the independence of the $Z_k$s and \eqref{e.mome.product} to evaluate the last expectation.
Doing so gives
\begin{align}
	\label{e.p.mprod.eval}
	A_{j,\vecell,\vecell'}
	= 
	\IP{
		g_0^{\otimes\barn},
		\sg^{\intv{\barn}}(t_1-t_0) \prod_{k=1}^{|\tset|-1} g_k^{\otimes \barn} \, 
		\big( \molli^{k,\star}_{\ell_k}{}^{\otimes (\barn-j)} \otimes  \mollii^{k,\star}_{\ell_k'}{}^{\otimes j} \big) 
		\,
		\sg^{\intv{\barn}}(t_{k+1}-t_{k})
		\cdot
		g_{|\tset|}^{\otimes\barn}
	}\ .
\end{align}
Because $\{\molli^i_\ell\}$ and $\{\mollii^i_\ell\}$ are $\Lsp^2$-applicable mollifiers, and because the $\sg^{\intv{\barn}}$s in \eqref{e.p.mprod.eval} are bounded, as $(\vecell,\vecell')\to(\vecinfty,\vecinfty)$, the term $A_{j,\vecell,\vecell'}$ converges to
$	
	\ip{
		g_0^{\otimes\barn},
		\sg^{\intv{\barn}}(t_1-t_0) \prod_{k=1}^{|\tset|-1} g_k^{\otimes \barn} \sg^{\intv{\barn}}(t_{k+1}-t_{k})\cdot g_{|\tset|}^{\otimes\barn}
	}
$.
Since this limit is independent of $j$, the limit of \eqref{e.p.mprod.exp} is zero, which verifies \eqref{e.p.mprod.goal}.
\end{proof}

\begin{rmk}\label{r.product}
The martingale formulation of the products $\mprod$ and $\Mprod$
\end{rmk}

Now take any $Z$ that satisfies Definition~\ref{d.shf}\ref{d.shf.ck}, \ref{d.shf.inde}, and \ref{d.shf.mome} for an $\barn\in2\Z_{>0}$ and apply Lemma~\ref{l.product.well} and Proposition~\ref{p.mprod} with $Z_k:=Z_{t_{k},t_{k+1}}$.
The resulting limit, still written as $\PM_{\tset'}\in\Msp(\R^{2\tset})$, is the finite dimensional distribution of the polymer measure associated with an SHF($\theta$).
Once Theorem~\ref{t.main} and Proposition~\ref{p.conv} are proven, the polymer measure exists and is unique in law.
Similarly, $\PM^{\tsett}_{\tset'}$ is the marginal of $\PM_{\tset'}$ on $\R^{2(\tset'\setminus S)}$.
Combining Proposition~\ref{p.mprod} and Definition~\ref{d.shf}\ref{d.shf.ck} gives the consistent properties.
\begin{cor}\label{c.mprod}
Let $Z$ satisfy Definition~\ref{d.shf}\ref{d.shf.ck}, \ref{d.shf.inde}, and \ref{d.shf.mome} for an $\barn\in2\Z_{>0}$.
\begin{enumeratemine}
\item \label{c.mprod.pm}
$\PM^{\tsett}_{\tset'}=\PM_{\tset'\setminus \tsett}$ almost surely.
\item \label{c.mprod.ck}
For any $\Lsp^2$-applicable mollifiers $\{\molli^k_\ell\}_{\ell}$, $k=1,\ldots,|\tset|-1$, and any $f\in\Ccsp(\R^4)$,
\begin{align}
	\E \big[ \big| \big(Z_{t_0,t_1} \mprod_{\molli^{1}_{\ell_1}} \cdots \mprod_{\molli^{|\tset|-1}{}_{\ell_{|\tset|-1}}} Z_{t_{|\tset|-1},t_{|\tset|}} - Z_{t_0,t_{|\tset|}} \big) f \big|^{\barn}\big]
	\longrightarrow 0\ ,
	\qquad
	\text{as } \vecell\to\vecinfty\ .
\end{align}
\end{enumeratemine}
\end{cor}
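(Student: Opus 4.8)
The plan is to derive both parts from Proposition~\ref{p.mprod} together with the $|\tset|=2$ instance of part~\ref{c.mprod.ck}, which in turn comes straight from the axiom in Definition~\ref{d.shf}\ref{d.shf.ck}. First I would observe that part~\ref{c.mprod.ck} is the special case of part~\ref{c.mprod.pm} in which $\tsett=\{t_1,\dots,t_{|\tset|-1}\}$ is the set of \emph{all} intermediate times: then $\tset'\setminus\tsett=\{t_0,t_{|\tset|}\}$, so $\PM_{\tset'\setminus\tsett}$ has no junctures and equals $Z_{t_0,t_{|\tset|}}$, while $\PM^{\tsett}_{\tset'}$ is exactly the $\Lsp^{\barn}$-limit (from Proposition~\ref{p.mprod}) of the full $\mprod$-product appearing in part~\ref{c.mprod.ck}; see \eqref{e.PMm.molli}. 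So it suffices to prove part~\ref{c.mprod.pm}. By Proposition~\ref{p.mprod} both $\PM^{\tsett}_{\tset'}$ and $\PM_{\tset'\setminus\tsett}$ are $\Lsp^{\barn}$-limits of mollified products that do not depend on the mollifiers chosen, and since $\metric$ is complete it is enough to show these two limits agree when tested against $\bigotimes_k g_k$ with each $g_k$ ranging over the countable family $\metricS$.

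\emph{Atomic case ($|\tset|=2$, $\tsett=\{t_1\}$).} Here $\PM^{\{t_1\}}_{\{t_0,t_1,t_2\}}$ is the $\Lsp^{\barn}$-limit of $Z_{t_0,t_1}\mprod_{\molli_\ell}Z_{t_1,t_2}$ and $\PM_{\{t_0,t_2\}}=Z_{t_0,t_2}$. Choosing $\{\molli_\ell\}_\ell$ to be the $\Lsp^2$-applicable mollifier supplied by Definition~\ref{d.shf}\ref{d.shf.ck} for the triple $t_0<t_1<t_2$, the product $Z_{t_0,t_1}\mprod_{\molli_\ell}Z_{t_1,t_2}$ converges to $Z_{t_0,t_2}$ vaguely in probability by that axiom, and it also converges to $\PM^{\{t_1\}}_{\{t_0,t_1,t_2\}}$ in $\Lsp^{\barn}$ against every $f\in\Ccsp(\R^4)$, hence vaguely in probability (dominated convergence on the series defining $\metric$). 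Uniqueness of limits in probability then gives $\PM^{\{t_1\}}_{\{t_0,t_1,t_2\}}=Z_{t_0,t_2}$ a.s.

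\emph{General case.} I would induct on $|\tsett|$, the case $|\tsett|=0$ being trivial. For $|\tsett|\geq 1$, fix $t_j\in\tsett$ and put $\tsett_0:=\tsett\setminus\{t_j\}$. Integrating $x_j$ out of $\PM^{\tsett}_{\tset',\vecmolli_{\vecell}}$ merges the two fine increments adjacent to $t_j$ into the random measure $B_{\ell_j}:=Z_{t_{j-1},t_j}\mprod_{\molli^j_{\ell_j}}Z_{t_j,t_{j+1}}$, so $\PM^{\tsett}_{\tset',\vecmolli_{\vecell}}$ becomes a mollified product over the time set $\tset'\setminus\{t_j\}$, with $\mprod$'s still at the times of $\tsett_0$ and with $B_{\ell_j}$ in the slot from $t_{j-1}$ to $t_{j+1}$. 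By the atomic case, $B_{\ell_j}\to Z_{t_{j-1},t_{j+1}}$ in $\Lsp^{\barn}$ as $\ell_j\to\infty$. Since the mollifier-independent limit of Proposition~\ref{p.mprod} may be reached by sending the indices to $\infty$ in any order, I send $\ell_j\to\infty$ first with the other indices fixed: passing this convergence through the surrounding fixed products replaces $B_{\ell_j}$ by $Z_{t_{j-1},t_{j+1}}$, giving the $\tsett_0$-type mollified product for $\tset'\setminus\{t_j\}$ built from the independent SHF increments $Z_{t_0,t_1},\dots,Z_{t_{j-2},t_{j-1}},Z_{t_{j-1},t_{j+1}},Z_{t_{j+1},t_{j+2}},\dots$ (independent by Definition~\ref{d.shf}\ref{d.shf.inde} and closure under a.s.\ limits, as in the discussion before Lemma~\ref{l.product.well}). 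Letting the remaining indices tend to $\infty$ and applying the induction hypothesis identifies the result with $\PM_{\tset'\setminus\tsett}$; as this limit is mollifier-independent it coincides with $\PM^{\tsett}_{\tset'}$, closing the induction.

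\emph{The main obstacle} is the step of passing $B_{\ell_j}\to Z_{t_{j-1},t_{j+1}}$ through the surrounding products. A direct moment expansion in the style of Proposition~\ref{p.mprod} is not available, since it would need the joint law of $(Z_{t_{j-1},t_j},Z_{t_j,t_{j+1}})$ and $Z_{t_{j-1},t_{j+1}}$, which the axioms do not provide. I would instead prove a continuity property of the products: for a fixed nonnegative $\molli\in\Lsp^2(\R^2)\cap\Ccsp(\R^2)$, the map $\mu\mapsto\mu\mprod_{\molli}\nu$ and its iterates and marginals depend continuously on their arguments for the topology in which $A_\ell\to A$ means $(A_\ell-A)\,f\to 0$ in $\Lsp^{\barn}$ for every $f\in\Ccsp$, provided one has uniform bounds on the exponentially weighted moments $\E[\norm{\mu(g\otimes\cdot)}_{p,a}^{\barn}]$ and $\E[\norm{\mu(\cdot\otimes g)}_{p,a}^{\barn}]$ with $p\in(2,\infty)$, $a\in(0,\infty)$, $g\in\Ccsp(\R^2)$. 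Such uniform bounds hold for every measure in play: testing an SHF increment against a compactly supported $g$, its first moment is a heat kernel (see \eqref{e.heatm}), so the corresponding marginal has Gaussian-type tails, and the $\barn$-th moments of the mollified products reduce—by independence and the moment formula—to operator sandwiches controlled by \eqref{e.sg.bd} and \eqref{e.tech}, which propagates the weighted control down the whole chain. Granting these bounds, the continuity follows by splitting each $\Ccsp$ test function into a compactly supported core—on which conditioning on the independent surrounding increments and the atomic case give the convergence—and a tail, dominated uniformly by the weighted-norm bounds and the Gaussian decay. Feeding this into the induction proves part~\ref{c.mprod.pm}, hence part~\ref{c.mprod.ck}.
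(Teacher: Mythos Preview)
The paper does not spell out a proof of Corollary~\ref{c.mprod}, stating only that it follows by ``combining Proposition~\ref{p.mprod} and Definition~\ref{d.shf}\ref{d.shf.ck}.'' Your strategy---the atomic case via uniqueness of limits in probability, then induction on $|\tsett|$---is the natural way to make that sentence precise, and your atomic case is correctly argued.

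The step you flag as the main obstacle is genuinely the crux, and your proposed fix points in the right direction but needs sharpening. A cleaner route is this. The convergence $(B_{\ell_j}-Z_{t_{j-1},t_{j+1}})(g\otimes h)\to 0$ in $\Lsp^{\barn}(\P)$, a priori known only for $h\in\Ccsp(\R^2)$, extends to all $h\in\Lsp^2(\R^2)$ by density, because the moment formula together with \eqref{e.sg.bd} and the uniform boundedness of $\{(\molli^j_{\ell})^\star\}_\ell$ on $\Lsp^2$ (strong convergence plus the uniform boundedness principle) give $\E\big[|B_{\ell_j}(g\otimes h)|^{\barn}\big]\leq c\,\norm{h}_2^{\barn}$ uniformly in $\ell_j$, and the same bound holds for $Z_{t_{j-1},t_{j+1}}$. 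Now condition on the surrounding independent increments: the random intermediate functions $\psi_{\mathrm{left}},\psi_{\mathrm{right}}$ that appear when testing the full product against $\bigotimes_k g_k$ become fixed $\Lsp^2(\R^2)$ functions, and the conditional expectation tends to zero. For dominated convergence over the outer expectation you need $\E\big[\norm{\psi_{\mathrm{left}}}_2^{\barn}\,\norm{\psi_{\mathrm{right}}}_2^{\barn}\big]<\infty$; after choosing the \emph{fixed} surrounding mollifiers to lie in $\Ccsp(\R^2)$ (permissible since limits are mollifier-independent), this follows from Minkowski's integral inequality and the weighted bound \eqref{e.sg.bd} on $\sg^{\intv{\barn}}$. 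Your phrasing about ``splitting each $\Ccsp$ test function into a compactly supported core and a tail'' is slightly off: the outer test function is already in $\Ccsp$; what must be controlled is the $\Lsp^2$ integrability of the random intermediate functions $\psi$.
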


\section{Axiomatic characterization, proof of Theorem~\ref{t.main.}}
\label{s.unique}
As mentioned after Theorem~\ref{t.main.}, it immediately implies Theorem~\ref{t.main}.

We start the proof of Theorem~\ref{t.main.} with a few reductions.
Take any $\dense$, $Z$, and $\tilZ$ as in Theorem~\ref{t.main.}.
By Definition~\ref{d.shf}\ref{d.shf.} and \ref{d.shf.inde} and Corollary~\ref{c.mprod}\ref{c.mprod.ck}, it suffices to prove that, for every $s<t\in\dense$, $Z_{s,t}$ and $\tilZ_{s,t}$ have the same law.
To simplify notation, take $(s,t)=(0,1)$ with the assumption that $0,1\in\dense$.
The proof works for any $s<t\in\dense$ without the assumption that $0,1\in\dense$.
By a standard criterion, see \cite[Theorem~2.2]{kallenberg2017random} for example, it suffices to prove that, for every $h\in\Cbsp(\R)$ and $f\in\Ccsp(\R^4)$,
$
	\E[ h ( Z_{0,1} f ) ]
	-
	\E[ h ( \tilZ_{0,1} f ) ]
	=
	0
$.
Via an approximation argument, we can assume that $h$ is $\Csp^4$ with bounded derivatives and that $f\in \Ccsp(\R^2)^{\otimes 2}$, more explicitly
\begin{align}
	\label{e.Gamma}
	f = \sum_{(g,g')\in\Gamma} g\otimes g'\ ,
	\qquad
	\Gamma \text{ a finite subset of } \Ccsp(\R^2)^2\ .
\end{align}

\subsection{Lindeberg's method}
\label{s.unique.taylor}
We use Lindeberg's exchange method.
Take $\Tset=\{t_1<\ldots<t_{|\Tset|}=1\}\subset \dense\cap(0,1]$, fix any $\Lsp^2$-applicable mollifier $\{\molli_\ell\}_{\ell}$, and let
\begin{align}
	h_{i}(\ell,\Tset)
	&:=
	r_{i-1}(\ell,\Tset) - r_{i}(\ell,\Tset)\ ,
	\qquad
	r_{i}(\ell,\Tset)
	:=
	\E\big[ h \big( \tilZ_{0,t_{i}} \mprod_{\molli_\ell} Z_{t_{i},1} f \big) \big] \ ,
\end{align}
with the convention that $t_0:=0$.
We take $Z$ and $\tilZ$ to be \emph{independent} within expectations like the one above.
The product $\mprod_{\molli_\ell}$ above is well-defined thanks to Lemma~\ref{l.product.well}.
We then write
\begin{align}
	\label{e.unique.goal}
	\E\big[ h ( Z_{0,1} f ) \big]
	-
	\E\big[ h ( \tilZ_{0,1} f ) \big]
	=
	h_1(\ell,\Tset) + \ldots + h_{|T|}(\ell,\Tset)\ .
\end{align}
Let $\mesh \Tset:=\min\{t_i-t_{i-1}| i=1,\ldots,|\Tset|\}$.
We will send $\ell\to\infty$ first and $\mesh\Tset \to 0$ second, where the second limit is possible thanks to the assumption that $\dense\subset\R$ is dense.

We seek to derive an approximation of $r_{i-1}(\ell,\Tset)$ via the Taylor expansion.
First, by Corollary~\ref{c.mprod}\ref{c.mprod.ck} for $\tset'=\{0,t_{i-1},t_i,1\}$, we have $r_{i-1}(\ell,\Tset)=\lim_{\ell'\to\infty} \E[ h \big( \tilZ_{0,t_{i-1}} \mprod_{\molli_\ell} Z_{t_{i-1},t_{i}} \mprod_{\molli_{\ell'}} Z_{t_{i},1} f ) ]$.
Further using Proposition~\ref{p.mprod} with $\tset'=\{0,t_{i-1},t_{i},1\}$ and $\tsett=\{t_{i-1},t_{i}\}$ gives
\begin{align}
	\label{e.unique.pretaylor}
	\lim_{\ell\to\infty} r_{i-1}(\ell,\Tset)
	=
	\lim_{(\ell,\ell')\to(\infty,\infty)}
	\E\big [ h\big( \tilZ_{0,s} \mprod_{\molli_\ell} Z_{s,t} \mprod_{\molli_{\ell'}} Z_{t,1} f ) \big] \Big|_{(s,t)=(t_{i-1},t_{i})}\ .
\end{align}
Within the last expectation, write $Z_{s,t}=\heatm_{s,t}+W_{s,t}$, let 
$
	Y(\ell,\ell') := \tilZ_{0,s} \mprod_{\molli_\ell} \heatm_{s,t} \mprod_{\molli_{\ell'}} Z_{t,1} f
$ 
and
$
	V(\ell,\ell') := \tilZ_{0,s} \mprod_{\molli_\ell} W_{s,t} \mprod_{\molli_{\ell'}} Z_{t,1} f,
$
and Taylor expand $h$ up to the third order to get
\begin{align}
	\label{e.unique.taylor}
	\E\big [ h\big( \tilZ_{0,s} \mprod_{\molli_\ell} Z_{s,t} \mprod_{\molli_{\ell'}} Z_{t,1} f ) \big]
	=
	\sum_{n=0}^3 \frac{1}{n!} h^n_{s,t}(\ell,\ell') + v_4\ ,
\end{align}
where $h^n_{s,t}(\ell,\ell'):=\E[ h^{(n)}(Y(\ell,\ell')) V(\ell,\ell')^n ]$ and $|v_4|\leq c(h) \E[ V(\ell,\ell')^4]$.
For the $n\leq 3$ terms, write $\E[\,\cdot\,]=\E[\E[\,\cdot\,|\tilZ_{0,s},Z_{t,1}]]$ and use the independence of $\tilZ_{0,s}$, $W_{s,t}$, $Z_{t,1}$ and \eqref{e.Wmome} to evaluate the inner expectation.
Doing so gives, for $n\leq 3$,
\begin{align}
	\label{e.unique.taylor.3}
	h^n_{s,t}(\ell,\ell')
	=
	\E\big[ 
		h^{(n)}(Y(\ell,\ell')) 
		\Ip{
			\tilZ(g\otimes \molli^{\star}_{\ell}(\cdot))^{\otimes n},
			\sgg^{\intv{n}}(t-s)
			Z(\molli^{\star}_{\ell'}(\cdot)\otimes g')^{\otimes n}			
		}
	\big]\ ,
\end{align}
where $\tilZ(g\otimes \molli^{\star}_{\ell}(x)):=\tilZ(g\otimes \molli_{\ell}(\cdot-x))$ and $Z(\molli^{\star}_{\ell'}(x)\otimes g):=Z(\molli_{\ell'}(x-\cdot)\otimes g)$.
The same argument of the proof of Proposition~\ref{p.mprod} shows that the $(\ell,\ell')\to(\infty,\infty)$ limit of \eqref{e.unique.taylor.3} exists, which we denote by $h^{n}_{s,t}$, and that
\begin{align}
	\label{e.unique.taylor.4}
	\lim_{(\ell,\ell')\to(\infty,\infty)}
	\E[ V(\ell,\ell')^4]
	=
	v_{s,t}
	:=
	\sum_{\Gamma^4} \IP{ \bigotimes_{i=1}^4 g_i, \sg^{\intv{4}}(s) \sgg^{\intv{4}}(t-s) \sg^{\intv{4}}(1-t) \bigotimes_{i=1}^4 g'_i }\ ,
\end{align}
where the sum runs over $((g_i,g'_i))_{i=1,\ldots,4}\in\Gamma^4$ and $\Gamma$ was defined in \eqref{e.Gamma}.
Hence
\begin{align}
	\label{e.unique.taylor.bd}
	\limsup_{\ell\to\infty} \Big| r_{i-1}(\ell,\Tset) - \sum_{n=0}^3 \frac{1}{n!} h^n_{t_{i-1},t_i} \Big|
	\leq
	c(h)\,v_{t_{i-1},t_i} \ .
\end{align}

Next, we claim that \eqref{e.unique.taylor.bd} holds also for $r_{i}(\ell,\Tset)$ in place of $r_{i-1}(\ell,\Tset)$.
Indeed, running the same argument above shows that \eqref{e.unique.pretaylor}--\eqref{e.unique.taylor.3} hold for $r_{i}(\ell,\Tset)$ in place of $r_{i-1}(\ell,\Tset)$, for $W_{s,t}$ in place of $\tilW_{s,t}$, and with $\ell$ and $\ell'$ swapped.
The swapping of $\ell$ and $\ell'$ does not matter because the second limit in \eqref{e.unique.pretaylor} is taken jointly in $\ell,\ell'$.
When evaluating the expectations in \eqref{e.unique.taylor.3}--\eqref{e.unique.taylor.4}, we used only the moment formula~\eqref{e.Wmome} and the independence of $\tilZ_{0,s}$, $W_{s,t}$, $Z_{t,1}$.
These properties hold also for $\tilW_{s,t}$ in place of $W_{s,t}$.
Hence \eqref{e.unique.taylor.bd} holds for $r_{i}(\ell,\Tset)$ in place of $r_{i-1}(\ell,\Tset)$.

Let us bound \eqref{e.unique.goal}.
Combining \eqref{e.unique.taylor.bd} for $r_{i-1}(\ell,\Tset)$ and for $r_{i}(\ell,\Tset)$ gives
\begin{align}
	\label{e.unique.qibd}
	\limsup_{\ell\to\infty} |h_{i}(\ell,\Tset)|
	\leq
	c(h)\, v_{t_{i-1},t_i} \ .
\end{align}
Next, in \eqref{e.unique.taylor.4}, use the symmetry of $\sg^{\intv{4}}(s)$ (its generator is self-adjoint) to write the inner product as $\ip{\sg^{\intv{4}}(s)\bigotimes_{i=1}^4 g_i,  \sgg^{\intv{4}}(t-s) \sg^{\intv{4}}(1-t) \bigotimes_{i=1}^4 g'_i}$, and, for $p\in(2,\infty)$, use \eqref{e.sg.bd} to bound $\norm{\sg^{\intv{4}}(s) \bigotimes_{i=1}^4 g_i}_{p,1}$ and $\norm{\sg^{\intv{4}}(1-t) \bigotimes_{i=1}^4 g'_i}_{p,1}$ by $c=c(p,\Gamma)$, where we encoded the dependence on $g_i,g'_i$ by $\Gamma$.
The result gives $v_{s,t} \leq c(p,h,\Gamma) \norm{\sgg^{\intv{4}}(t-s)}_{p,1\to q, 1}$, where $1/p+1/q=1$.
Inserting this bound into \eqref{e.unique.qibd} and summing the result over $i$ give, for every $p\in(2,\infty)$,
\begin{align}
	\big| \E\big[ h ( Z_{0,1} f ) \big] - \E\big[ h ( \tilZ_{0,1} f ) \big] \big|
	\leq
	c(p,h,\Gamma)\,
	\sum_{i=1}^{|\Tset|} \Norm{\sgg^{\intv{4}}(t_{i}-t_{i-1})}_{p,1\to q, 1}\ .
\end{align}
The proof is completed upon applying Lemma~\ref{l.key} with $2n=4$ and any $p\in(6,\infty)$ and sending $\mesh\Tset \to 0$.
We prove the lemma in the next subsection.
\begin{lem}\label{l.key}
For any $2n\in2\Z_{>0}$, $p\in(2,\infty)$, and $a,t\in(0,\infty)$, with $1/p+1/q=1$,
\begin{align}
	\label{e.key}
	\Norm{ \sgg^{\intv{2n}}(t) }_{p,a\to q,a}
	\leq
	c \, e^{ct} t^{\frac{n+1}{2}(1-\frac{2}{p})}
	\ , 
	\qquad
	c=c(n,p,a)\ .
\end{align}
\end{lem}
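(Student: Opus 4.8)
The plan is to start from the diagram expansion \eqref{e.sgg}, $\sgg^{\intv{2n}}(t)=\sum_{\vecalpha\in\dgm_*\intv{2n}}\sgsum^{\intv{2n}}_\vecalpha(t)$, and to bound each $\big\|\sgsum^{\intv{2n}}_\vecalpha(t)\big\|_{p,a\to q,a}$ by a quantity that is summable over $\vecalpha\in\dgm_*\intv{2n}$ and of order $e^{ct}\,t^{\frac{n+1}{2}(1-2/p)}$. Recall from \eqref{e.sgsum} that $\sgsum^{\intv{2n}}_\vecalpha(t)$ is a time-simplex integral of the ordered product of the operators $\heatsg^{\intv{2n}}_{\alpha_1}(\cdot)^*$, $\Jop^{\intv{2n}}_{\alpha_k}(\cdot)$, $\heatsg^{\intv{2n}}_{\alpha_k\alpha_{k+1}}(\cdot)$, and $\heatsg^{\intv{2n}}_{\alpha_{|\vecalpha|}}(\cdot)$. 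The crude estimate — route through $\Lsp^2(\R^{2\intv{2n}})$ in the middle via \eqref{e.1/p.2}, bound the rest by \eqref{e.bds}, and carry out the simplex integral with Lemma~\ref{l.sum} (taking $b=b'=-1/p$) — yields only the exponent $1-2/p$. This already suffices for $n=1$, but for $n\ge 2$ one must extract the extra $\frac{n-1}{2}(1-2/p)$ of decay.

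The source of that extra decay is that on $\R^2$ the heat kernel improves integrability at the cost of a \emph{positive} power of $t$: for $2<p'$, $1/q'=1-1/p'$, and $a>0$, the weighted Young inequality gives $\|\hk(t)\star\,\cdot\,\|_{\Lsp^{p',a}(\R^2)\to\Lsp^{q',a}(\R^2)}\le c\,e^{ct}\,t^{1/q'-1/p'}$ (the exponential weight only contributes an $e^{ct}$, since a Gaussian absorbs it). Consequently a ``merge'' — the $S_\alpha$-part of $\heatsg_\alpha$ and of $\heatsg_{\alpha\alpha'}$, and the center-of-mass part of $\Jop$ — that collapses two $\R^2$-coordinates of integrability $p'$ into one coordinate of integrability $q'$ gains a factor $t^{1/q'-2/p'}$, a genuine gain once $q'<p'/2$. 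The combinatorial input is that every $\vecalpha\in\dgm_*\intv{2n}$, because its pairs cover all $2n$ indices, offers at least $n+1$ such merge/transition steps: for the minimal, length-$n$ matching diagrams these are the two end operators $\heatsg_{\alpha_1}^*,\heatsg_{\alpha_n}$ together with the $n-1$ consecutive transitions $\heatsg_{\alpha_k\alpha_{k+1}}$, which for matchings have $\alpha_k\cap\alpha_{k+1}=\emptyset$ and hence factor as $\heatsg^{\intv{2n}}_{\alpha_k}(\tau/2)\,\heatsg^{\intv{2n}}_{\alpha_{k+1}}(\tau/2)^*$; longer diagrams afford at least as many effective steps while carrying a compensating smallness factor from Lemma~\ref{l.sum}. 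Distributing the total integrability drop $1/q-1/p=1-2/p$ over these $\ge n+1$ steps and balancing against the weighted bounds \eqref{e.tech} produces a net gain of order $t^{\frac{n+1}{2}(1-2/p)}$ per diagram.

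Concretely, for each $\vecalpha$ one bounds the integrand of \eqref{e.sgsum} factor by factor in appropriately chosen weighted $\Lsp^{p'}\to\Lsp^{q'}$ norms — all intermediate exponents staying in $(1,\infty)$ and the weight index $a$ preserved throughout, using \eqref{e.tech} and the factorization $\heatsg^{\intv{2n}}_{\alpha\alpha'}(\tau)=\heatsg^{\intv{2n}}_\alpha(\tau/2)\,\heatsg^{\intv{2n}}_{\alpha'}(\tau/2)^*$ to treat the transitions — and then applies Lemma~\ref{l.sum} (with $b,b'$ the gained end-exponents) to perform the $2|\vecalpha|+1$-fold simplex integral. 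Finally one sums over $\vecalpha\in\dgm_*\intv{2n}$: there are at most $\binom{2n}{2}^m$ diagrams of each length $m\ge n$, and for $\lambda$ large the factor $\big(\tfrac{c_0}{\log(\lambda-c_0-1)}\big)^{m-1}$ from Lemma~\ref{l.sum} turns the sum over $m$ into a convergent geometric series, producing the constant $c(n,p,a)$ and the factor $e^{ct}$ in \eqref{e.key}.

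The main obstacle is the bookkeeping behind the second paragraph. One must verify the combinatorial claim that every $\dgm_*\intv{2n}$ diagram admits $\ge n+1$ positions at which integrability can be lowered — keeping in mind that a center-of-mass coordinate occurs simultaneously in two heat-kernel factors of $\heatsg_{\alpha\alpha'}$ and of $\Jop$, which constrains how the Young-type gains combine — and then solve the resulting optimization for the intermediate exponents so that the accumulated gain is exactly $\tfrac{n+1}{2}(1-2/p)$ while every intermediate space remains admissible (this is where $a>0$ is used, cf.\ \eqref{e.L2<Lpa} and \eqref{e.1/p.2}), also checking that longer diagrams, whose per-step gain may be smaller, are dominated by the Lemma~\ref{l.sum} smallness factor. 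For the use of Lemma~\ref{l.key} made in the proof of Theorem~\ref{t.main.} only $2n=4$ is needed, where $\dgm_*\intv{4}$ consists of the three perfect matchings together with their one-step enlargements and the bookkeeping is short.
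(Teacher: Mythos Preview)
Your sketch correctly isolates the combinatorial source of the extra decay: every $\vecalpha\in\dgm_*\intv{2n}$ must cover all $2n$ indices, which forces at least $n$ ``growth'' moments where a new index is first touched, and it is at those moments (together with one endpoint) that the two--to--one merge yields a gain of order $t^{1/2-1/p}$. This is exactly the mechanism the paper exploits.

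The implementation you propose, however, has a concrete obstruction. You plan to chain bounds through intermediate $\Lsp^{p',a}$ spaces and then feed the resulting exponents into Lemma~\ref{l.sum}\ref{l.sum.1}. But that lemma delivers the exponent $t^{1+b+b'}$ with $b,b'\in(-1,0]$, so it is capped at $t^{1}$. Already for $2n=4$ and $p>6$ --- precisely the regime used in the proof of Theorem~\ref{t.main.} --- the target exponent $\tfrac{3}{2}(1-2/p)$ exceeds $1$. The per--merge gains you describe occur at the \emph{interior} transitions $\heatsg^{\intv{2n}}_{\alpha_k\alpha_{k+1}}$, but Lemma~\ref{l.sum} hard--wires those to the scale--invariant bound $t^{-1}$ (and needs that scaling for \eqref{e.l.sum.bdint}); it offers no slot in which interior gains can be cashed. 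Factorizing $\heatsg_{\alpha_k\alpha_{k+1}}(\tau)=\heatsg_{\alpha_k}(\tau/2)\heatsg_{\alpha_{k+1}}(\tau/2)^*$ does not help either, since both factors share the single time variable $\tau$, so you cannot treat them as independent simplex coordinates. Making your route work would require a genuinely different simplex lemma, and the ``bookkeeping'' you flag as the main obstacle is not incidental.

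The paper proceeds by a reorganization that bypasses this issue. Instead of summing over the infinite set $\dgm_*\intv{2n}$, it groups diagrams by their \emph{increasing skeleton} $\veceta\in\dgm_\uparrow\intv{2n}$ --- the length--$n$ sequence of pairs at which the running union strictly grows --- which is a \emph{finite} index set. Everything between consecutive growth moments is resummed into an auxiliary operator $\Cop^{\omega}_{\alpha}$, bounded $\Lsp^2\to\Lsp^2$ by Lemma~\ref{l.sum}\ref{l.sum.2}, while the $n$ growth moments themselves are assembled into operators $\Bop^{\ell}_{\veceta}$ defined recursively in \eqref{e.key.Bop1}--\eqref{e.key.Bop.case2}. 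The gain is then extracted by a clean induction \eqref{e.key.bd.Bop}, routing through $\Lsp^2$ in the middle and invoking \eqref{e.1/p.1}--\eqref{e.1/p.2} once per growth step; the resulting $n$--fold simplex integral in \eqref{e.key.tilBop} is evaluated directly (not through Lemma~\ref{l.sum}), which is what permits exponents larger than $1$. This decomposition \eqref{e.key.newform} is the structural ingredient missing from your plan.
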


\subsection{Proof of Lemma~\ref{l.key}}
\label{s.unique.key}
The key to the proof is to derive a useful form of $\sgg^{\intv{2n}}(t)$.
Fix $2n\in2\Z_{>0}$, $p\in(2,\infty)$, and $a\in(0,\infty)$, write $c=c(n,p,a)$, and take $1/p+1/q=1$.
Recall the expression of $\sgg^{\intv{2n}}(t)$ from \eqref{e.sgg} and also \eqref{e.sgsum}.
The key to the proof is to rewrite that expression in a new form that exposes the nature of $\dgm_*\intv{2n}$.
To state the new form, we need some notation.
For $\alpha\in\pair(\omega)$, let
\begin{align}
	\label{e.key.Cop}
	\Cop^{\omega}_{\alpha}(s)
	&:=
	\sum_{\vecgamma\in\dgm(\omega), \gamma_1=\alpha} \int_{\Sigma(s)} \d \vectau\,
	\prod_{k=1}^{|\vecgamma|-1}
	\Jop^{\omega}_{\gamma_k}(\tau_{k})\,
	\heatsg^{\omega}_{\gamma_k\gamma_{k+1}}(\tau_{k+1/2})\,
	\cdot 
	\Jop^{\omega}_{\gamma_{|\vecgamma|}}(\tau_{|\vecgamma|})\,
	\heatsg^{\omega}_{\gamma_{|\vecgamma|}}(\tau_{|\vecgamma|+1/2})\ ,
\end{align}
which will be viewed as either $\Lsp^{p,a}(\R^{2\omega})\to \Lsp^2(\R^2\times\R^{2\omega\setminus\alpha}) $ or $\Lsp^2(\R^{2\omega})\to \Lsp^2(\R^2\times\R^{2\omega\setminus\alpha})$.
Next, consider the set of ``strictly increasing indices'' of length $n$:
\begin{align}
	\label{e.dgmup}
	&\dgm_{\uparrow}\intv{2n}
	:=
	\{
		\veceta\in\pair\intv{2n}^n 
		\,\big|\, 
		(\eta_1\cup\ldots\cup\eta_{\ell})\subsetneq(\eta_1\cup\ldots\cup\eta_{\ell+1}), {\ell}=1,\ldots,n-1 
	\big\}\ .
\end{align}
Take any $\veceta\in\dgm_{\uparrow}\intv{2n}$ and put $\omega_\ell:=\eta_1\cup\ldots\cup\eta_\ell$.
Let us define the operators 
\begin{align}
	\Bop^{\ell}
	=
	\Bop^{\ell}_{\veceta}(\vecs)
	=
	\Bop^{\ell}_{\eta_1,\ldots,\eta_{\ell}}(s_1,\ldots,s_{\ell}):\Lsp^{2}(\R^{2}\times\R^{2\omega_{\ell}\setminus\eta_{\ell}})\to\Lsp^{q,a}(\R^{2\omega_{\ell}})\ 
\end{align}
inductively for $\ell=1,\ldots, n$.
For $\ell=1$, define
\begin{align}
	\label{e.key.Bop1}
	\Bop^1_{\eta_1}(s_1):=\heatsg^{\eta_1}_{\eta_1}(s_1)^*\ .
\end{align}
For $\ell>1$, either $|\omega_{\ell}\cap\omega_{\ell-1}|=1$ or $|\omega_{\ell}\cap\omega_{\ell-1}|=0$.
We define $\Bop^{\ell}$ in the two cases separately.
\begin{description}[leftmargin=10pt]
\item[Case~1, when $|\omega_{\ell}\cap\omega_{\ell-1}|=1$] 
Let $\omega_{\ell}\setminus\omega_{\ell-1}=:\{i_*\}$ and $\omega_{\ell}\cap\omega_{\ell-1}=:\{j_*\}$.
\begin{align}
\label{e.key.Bop.case1}
	\Bop^{\ell}_{\veceta}(\vecs,x,y) 
	:=
	\big( \Bop^{\ell-1}_{\veceta}(\vecs) \Cop^{\omega_{\ell-1}}_{\eta_{\ell-1}}(s_\ell) \big)
	(x_{\omega_{\ell-1}},y_{\omega_{\ell-1},j_* \leftarrow \mathrm{c}})\,
	\cdot
	\hk(s_1+\ldots+s_\ell,x_{i_*}-\yc),	 
\end{align}
where $x_{\omega}:=(x_{i})_{i\in\omega}$ and $y_{\omega,j \leftarrow \mathrm{c}}$ means taking $y_{\omega}:=(y_i)_{i\in\omega} $ and replacing $y_{j}$ with $\yc$.
\item[Case~2, when $|\omega_{\ell}\cap\omega_{\ell-1}|=0$] 
\begin{align}
\label{e.key.Bop.case2}
	\Bop^{\ell}_{\veceta}(\vecs)
	:=
	\Bop^{\ell-1}_{\veceta}(\vecs) \Cop^{\omega_{\ell-1}}_{\eta_{\ell-1}}(s_\ell) 
	\otimes 
	\heatsg^{\eta_\ell}_{\eta_\ell}(s_1+\ldots+s_\ell)^* \ .
\end{align}
\end{description}
Finally, define $\tilBop_{\veceta}(t): \Lsp^{2}(\R^{2\intv{2n}})\to\Lsp^{q,a}(\R^{2\intv{2n}})$ 
and $\tilCop_{\veceta}(t):\Lsp^{p,a}(\R^{2\intv{2n}})\to\Lsp^{2}(\R^{2\intv{2n}})$
\begin{align}
	\label{e.key.tilBop}
	\tilBop_{\veceta}(t)
	&:=
	\int_{\Sigma(t)} \d\vecs \, 
	\Bop^n_{\veceta}(s_1,\ldots,s_{n})
	\otimes
	\heatsg^{\intv{2n}\setminus(\eta_1\cup\cdots\cup\eta_n)}(s_1+\ldots+s_{n})\ ,
\\
	\label{e.key.tilCop}
	\tilCop_{\veceta}(t)
	&:=
	\sum \int_{\Sigma(s)} \d \vectau\,
	\prod_{k=1}^{|\vecgamma|-1}
	\Jop^{\omega}_{\gamma_k}(\tau_{k})\,
	\heatsg^{\omega}_{\gamma_k\gamma_{k+1}}(\tau_{k+1/2})\,
	\cdot 
	\Jop^{\omega}_{\gamma_{|\vecgamma|}}(\tau_{|\vecgamma|})\,
	\heatsg^{\omega}_{\gamma_{|\vecgamma|}}(\tau_{|\vecgamma|+1/2})\ ,
\end{align}
where the last sum runs over $\vecgamma\in\dgm\intv{n}$ under the constraint that $\eta_1\cup\cdots\eta_n\cup\gamma_1\cup\cdots\gamma_{|\gamma|}=\intv{2n}$.
We will prove in the next few paragraphs that $\sgg^{\intv{2n}}(t)$ in \eqref{e.sgg} is equal to the new form
\begin{align}
	\label{e.key.newform}
	\sgg^{\intv{2n}}(t)
	=
	\sum_{\veceta\in\dgm_\uparrow\intv{2n}}
	\int_{s+s'=t} \d s \, 
	\,
	\tilBop_{\veceta}(s)\,
	\tilCop_{\veceta}(s')\ .
\end{align}

Proving \eqref{e.key.newform} requires the diagram interpretation of products of operators, which we now recall using two examples.
For the first example, consider $\heatsg^{12}_{12}(s_1)^*\Jop^{12}_{12}(s_2)\heatsg^{12}_{12}(s_3)$, which has kernel
\begin{align}
	\label{e.diag2}
	\int_{\R^4} \d \yc \d \yc' \,
	\prod_{k=1,2}\hk(s_1,x_k-\yc) 
	\cdot 
	\Jop^{12}_{12}(s_2,\yc-\yc') 
	\cdot 
	\prod_{k=1,2}\hk(s_3,\yc'-x'_k)\ .
\end{align}
This integrand is conveniently encoded by Figure~\ref{f.diagram2}.
There, each single line represents a heat kernel, with the endpoints indicating the coordinates of the $x$s or $y$s.
For example, the top-left line has left endpoint $1$ and right endpoint $\textc$, representing $\hk(s_1,x_1-\yc)$.
The time $s_1$ is represented by the horizontal length.
To fix the convention, we will read diagrams and products of operators \emph{from left to right}.
The operator $\heatsg^{12}_{12}(s_1)^*$ has kernel $\prod_{k=1,2}\hk(s_1,x_k-\yc)$, so its diagram consists of the two leftmost single lines in Figure~\ref{f.diagram2}, which, when read from left to right, has the effect of merging the 1st and 2nd coordinates into the $\mathrm{c}$th coordinate.
Next, the double line with endpoints $\textc$s represents $\Jop^{12}_{12}(s_2,\yc-\yc')$.
Finally, the diagram of $\heatsg^{12}_{12}(s_3)$ consists of the two rightmost single lines and has the effect of splitting the $\mathrm{c}$th coordinate into the 1st and 2nd.
In a diagram, nodes along the left and right boundaries represent the variables of the kernel, which are $x_1,x_2$ and $x'_1,x'_2$ in Figure~\ref{f.diagram2}.
We designate these nodes by hollow dots.
Other nodes are \emph{integrated over space} as in \eqref{e.diag2}, and we designate them by solid dots.

For the second example, consider $\heatsg^{123}_{12}(s_1)^*\Jop^{123}_{12}(s_2)\heatsg^{123}_{12\ 23}(s_3)$.
Referring to \eqref{e.incoming}, we see that $\heatsg^{123}_{12}(s_1)^* = \heatsg^{12}_{12}(s_1)^*\otimes \hk^3(s_1)$, where $\hk^3(s_1)$ denotes the heat kernel acting on the $3$rd coordinate.
Hence, the diagram of $\heatsg^{123}_{12}(s_1)^*$ consists of the three leftmost single lines in Figure~\ref{f.diagram3}, with the top two merging.
Similarly, the diagram $\Jop^{123}_{12}(s_2) = \Jop^{12}_{12}(s_1)\otimes \hk^3(s_1)$ consists of the double line and and the single line below it.
As for $\heatsg^{123}_{12\ 23}(s_3)$, its diagram consists of the three rightmost single lines and has the effect of ``swapping'' the merged coordinates from $12$ to $23$.
Note that the portion of Figure~\ref{f.diagram3} within the dashed bubble represents
\begin{align}
	\label{e.key.simplifying}
	\int_{\R^4} \d y_3 \d y'_3 \, \hk(s_1,x_3-y_3) \, \hk(s_2,y_3-y'_3) \, \hk(s_3,y'_3-\yc'')\ ,
\end{align}
which simplifies to $\hk(s_1+s_2+s_3,x_3-\yc'')$.
Accordingly, Figure~\ref{f.diagram3} simplifies to Figure~\ref{f.diagram3s}.
In general, any solid dot connecting exactly two single lines can be removed.
We call this \textbf{heat semigroup property} and will be using it to simplify the diagrams.

\begin{figure}
\begin{minipage}{.32\linewidth}
\includegraphics[width=\linewidth]{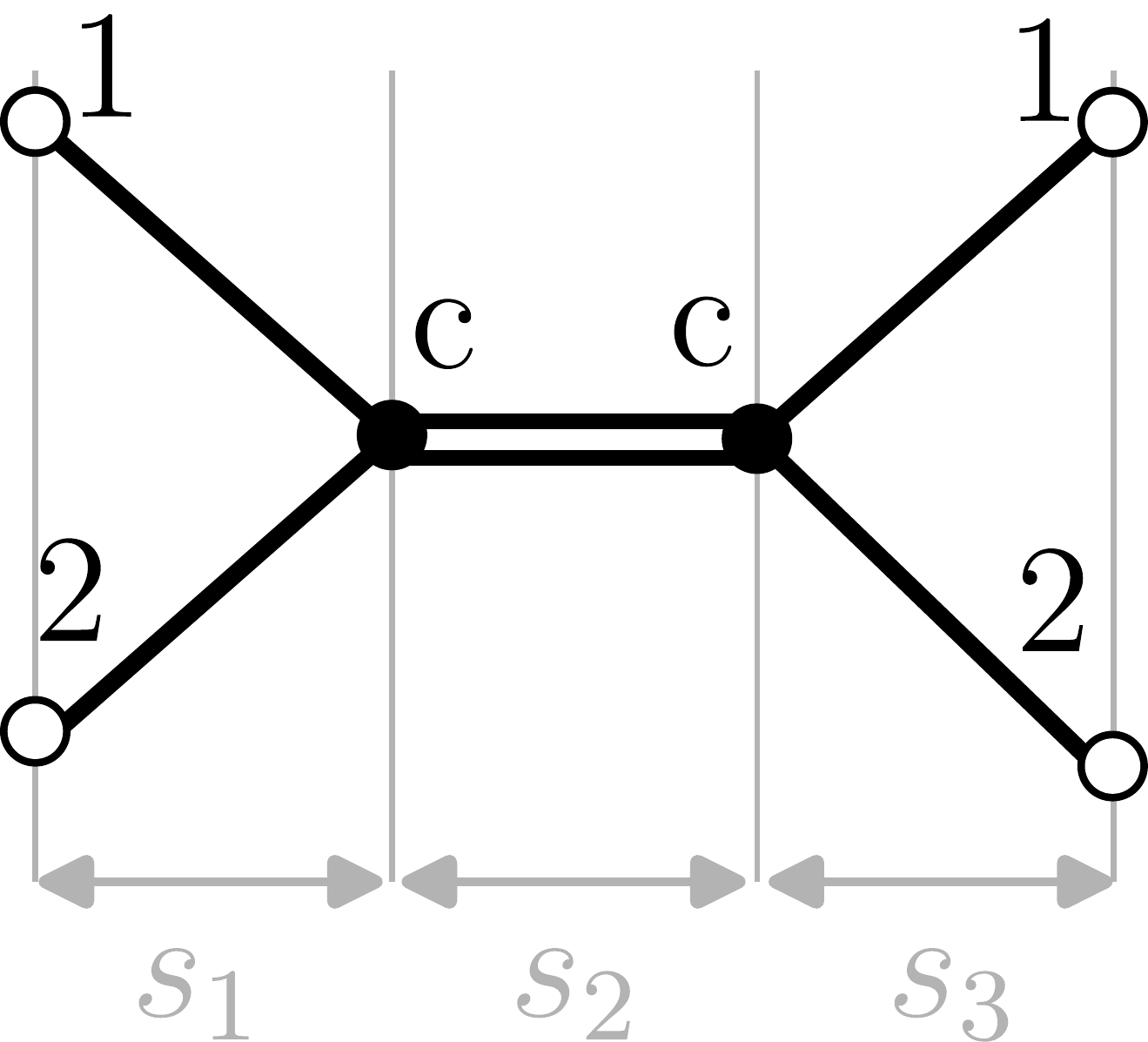}
\caption{First example}
\label{f.diagram2}
\end{minipage}
\hfill
\begin{minipage}{.32\linewidth}
\includegraphics[width=\linewidth]{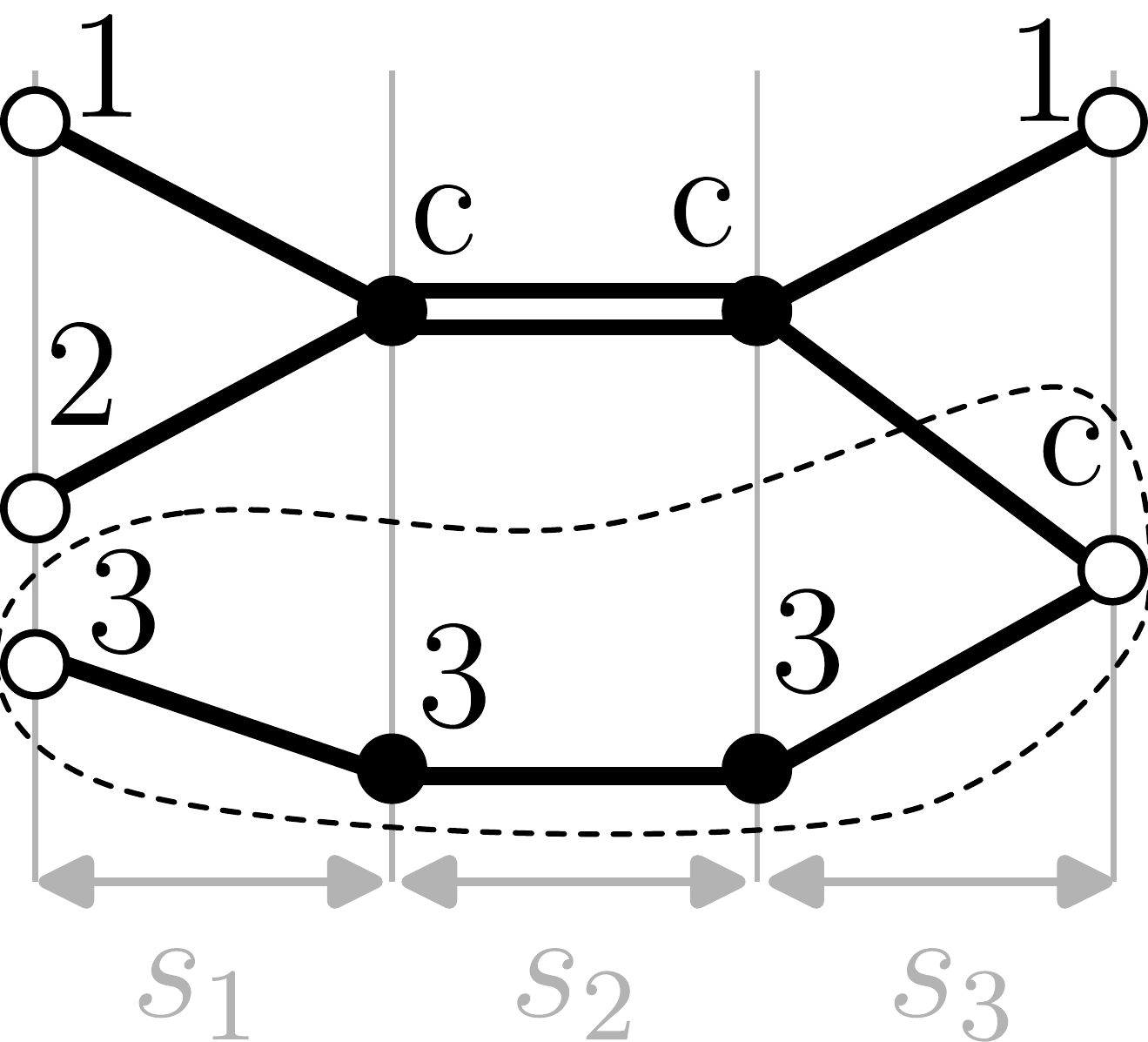}
\caption{Second example}
\label{f.diagram3}
\end{minipage}
\hfill
\begin{minipage}{.32\linewidth}
\includegraphics[width=\linewidth]{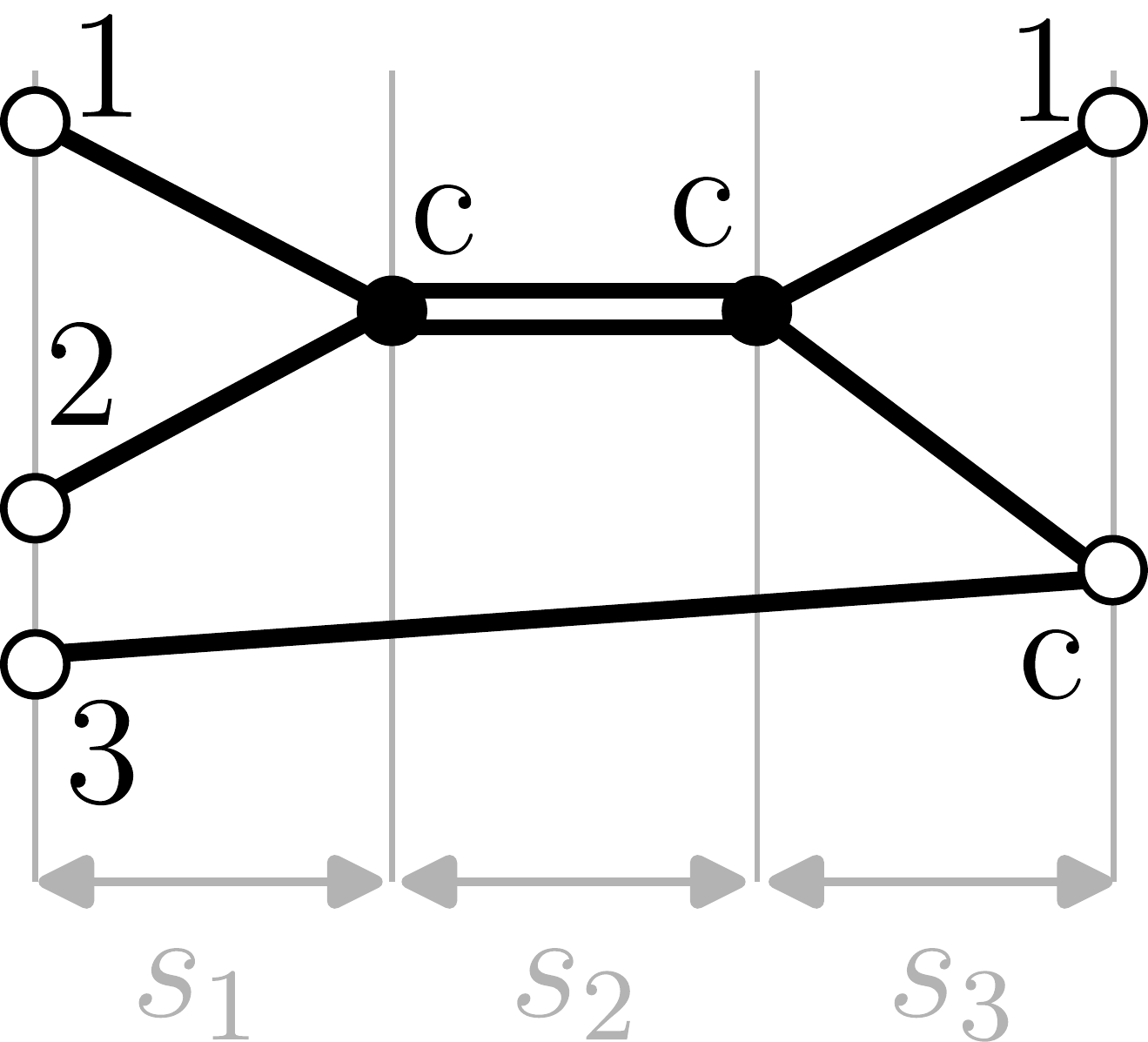}
\caption{Simplified diagram}
\label{f.diagram3s}
\end{minipage}
\end{figure}

Let us now return to the task of proving \eqref{e.key.newform}.
First, insert \eqref{e.sgsum} into \eqref{e.sgg} to get
\begin{align}
\label{e.key.Dop}
\begin{split}
	\sgg^{\intv{2n}}(t)
	=
	\sum_{\vecalpha\in\dgm_*\intv{2n}}
	\int_{\Sigma(t)} \d \vectau \,
	\heatsg^{\intv{2n}}_{\alpha_1}(\tau_{1/2})^*
	\prod_{k=1}^{|\alpha|-1}
	&\Jop^{\intv{2n}}_{\alpha_k}(\tau_{k}) \, \heatsg^{\intv{2n}}_{\alpha_{k}\alpha_{k+1}}(\tau_{k+1/2})
\\
	&\cdot
	\Jop^{\intv{2n}}_{\alpha_{|\vecalpha|}}(\tau_{|\vecalpha|})
	\heatsg^{\intv{2n}}_{\alpha_{|\vecalpha|}}(\tau_{|\vecalpha|+1/2}).
\end{split}
\end{align}
Take any $\vecalpha\in\dgm_*\intv{2n}$ and write $\omega'_k:=\alpha_1\cup\ldots\cup\alpha_{k}$.
Referring to \eqref{e.sgg} for the definition of $\dgm_*\intv{2n}$, we see that there exist $1=k_1<k_2<\ldots<k_n\leq |\vecalpha|$ such that
\begin{align}
	\label{e.key.up}
	\omega'_{k_\ell} = \omega'_{k_\ell+1} = \ldots = \omega'_{k_{\ell+1}-1} \subsetneq \omega'_{k_{\ell+1}}\ ,
	\qquad
	\ell=1,\ldots, n-1\ .
\end{align}
Put $\omega_\ell:=\omega'_{k_\ell}$ and $\eta_\ell:=\alpha_{k_\ell}$.
This way $\veceta$ belongs to $\dgm_{\uparrow}\intv{2n}$ in \eqref{e.dgmup}.

Let us use the procedure illustrated in the second last paragraph to read off the diagram of \eqref{e.key.Dop}. 
Read the integrand in \eqref{e.key.Dop} from the left, and stop just before reaching the first operator that involves $\alpha_{k_2}=\eta_{2}$.
After taking into account the time integrals from $\tau_{1/2}$ up to $\tau_{k_2-1}$ and summing over $\alpha_{2},\ldots,\alpha_{k_2-2}$, we find the expression
\begin{align}
	\label{e.key.dgm1.}
	\int
	\d\vectau \,
	\Bop^{1}_{\eta_1}(\tau_{1/2})\, \Aop^{\omega_1}_{\eta_1\,\alpha_{k_2-1}}(\tau_{1}+\ldots+\tau_{k_2-1})
	\otimes 
	\hk(\tau_{1/2}+\ldots+\tau_{k_2-1})^{\otimes \intv{2n}\setminus\omega_1}\ ,
\end{align}
where the integral runs over $\vectau:=(\tau_{1/2},\ldots,\tau_{k_2-1})\in \Sigma(t-\tau_{k_2-1/2}-\ldots-\tau_{|\vecalpha|+1/2})$, and
\begin{align}
	\label{e.key.Aop}
	\Aop^{\omega}_{\alpha\alpha'}(s)
	:=
	\sum_{\vecgamma\in\dgm(\omega),\vecgamma_1=\alpha,\vecgamma_{|\vecgamma|}=\alpha'}
	\int_{\Sigma(s)} \d \vectau\,'
	\prod_{k=1}^{|\vecgamma|-1}
	\Jop^{\omega}_{\gamma_k}(\tau'_{k})\,
	\heatsg^{\omega}_{\gamma_k\gamma_{k+1}}(\tau'_{k+1/2})
	\cdot
	\Jop^{\omega}_{\alpha_{|\vecgamma|}}(\tau'_{|\vecgamma|})\ .
\end{align}
In the diagram language, \eqref{e.key.up} for $\ell=1$ implies that, before reaching any operator that involves $\alpha_{k_2}=\eta_2$, only the coordinates in $\omega_1$ can ``entangle'', and the rest are evolved by the heat kernels.
We have used the heat semigroup property to simplify the heat kernels in \eqref{e.key.dgm1.}.

Continuing, we examine the expression we see when reaching the first operator that involves $\eta_2=\alpha_{k_2}$.
In the diagram language, when we reach such an operator, either $\eta_2\setminus\omega_1=\{i_*\}$ and the $i_*$th coordinate merges with a coordinate in $\omega_1$, or $\eta_2\cap\omega_1=\emptyset$ and the two coordinates in $\eta_2$ merge.
These two cases are exactly Case~1 and Case~2 in the definition of $\Bop^{\ell}$ for $\ell>1$.
After integrating over $\tau_{k_2-1/2}$ and summing over $\alpha_{k_2-1}$, we find the expression
\begin{align}
	\label{e.key.dgm2}
	\int_{\Sigma(t-\tau_{k_2}-\ldots-\tau_{|\vecalpha|+1/2})}
	\d\vectau \,
	\Bop^{2}_{\eta_1,\eta_{2}}(\tau_{1/2},\tau_{1}+\ldots+\tau_{k_2-1/2})
	\otimes 
	\hk(\tau_{1/2}+\ldots+\tau_{k_2-1/2})^{\otimes \intv{2n}\setminus\omega_2}\ .
\end{align}
Rename $(\tau_{1/2},\tau_{1}+\ldots+\tau_{k_2-1/2})$ to $(s_1,s_2)$ and simplify the expression as
\begin{align}
	\label{e.key.dgm2.}
	\int_{s_1+s_2=t-\tau_{k_2}-\ldots-\tau_{|\vecalpha|+1/2}}\d s_1 \,
	\Bop^{2}_{\eta_1,\eta_{2}}(s_1,s_2)
	\otimes 
	\hk(s_1+s_2)^{\otimes \intv{2n}\setminus\omega_2}\ .
\end{align}

Continue the procedure inductively until reaching the first operator that involves $\alpha_{k_n}=\eta_n$.
After summing over $\alpha_{k}$ for $k\in\intv{k_n}\setminus\{k_1,k_2,\ldots,k_n\}$ and renaming the time variables similarly to the above, we find the expression
\begin{align}
	\label{e.key.dgmn}
	\int_{\Sigma(t-\tau_{k_n}-\ldots-\tau_{|\vecalpha|+1/2})}
	\d \vecs \,
	\Bop^{n}_{\eta_1,\ldots,\eta_n}(s_1,\ldots,s_n)
	\otimes 
	\hk(s_1+\ldots+s_n)^{\otimes \intv{2n}\setminus\omega_n}\ .
\end{align}
Finally, read the rest of the integrand (after seeing the first operator that involves $\alpha_{k_n}=\eta_n=\alpha$), integrate over $\tau_{k}$ for $k\geq k_n$, and sum over $\alpha_k$ for $k>k_n$.
We arrive at \eqref{e.key.newform}.

Having derived \eqref{e.key.newform}, let us complete the proof based on it.
First, in \eqref{e.key.Cop}, using \eqref{e.bds} for $(p,a)=(2,0)$ and Lemma~\ref{l.sum}\ref{l.sum.2} for $c_1=0$ and a large enough $\lambda$ gives
\begin{align}
	\label{e.key.bdCop.2}
	\Norm{ \Cop^{\omega}_{\alpha}(t) }_{2\to 2}
	&\leq
	c e^{ct} t^{-1/2}\ .
\end{align}
Similarly, in \eqref{e.key.tilCop}, using \eqref{e.bd.swapping}--\eqref{e.bd.Jop} for $(p,a)=(2,0)$, \eqref{e.1/p.2} for $\omega=\intv{2n}$, and Lemma~\ref{l.sum}\ref{l.sum.2} for $c_1=0$ and a large enough $\lambda$ gives
\begin{align}
	\label{e.key.bdCop.pa}
	\Norm{ \tilCop_{\veceta}(t) }_{p,a \to 2}
	&\leq
	c e^{ct} t^{-1/p}\ .
\end{align}
Next, let us use induction to prove, for $\ell=1,\ldots,n$,
\begin{align}
	\label{e.key.bd.Bop}
	\norm{\Bop^{\ell}_{\veceta}(\vecs\,)}_{2 \to q,a} 
	\leq
	c \, (s_1\cdots s_{\ell})^{-1/p} \, (s_2 \cdots s_{\ell})^{-1/2} \, e^{c\,(s_1+\ldots+s_\ell)}\ .
\end{align}
The $\ell=1$ bound follows by applying \eqref{e.1/p.2} to \eqref{e.key.Bop1}.
Assume that the bound holds for $\ell-1$.
In Case~1, apply the right-hand side of \eqref{e.key.Bop.case1} to $f\in\Lsp^{p,a}(\R^{2\omega_{\ell}})$ and $g\in\Lsp^2(\R^2\times \R^{2\omega_{\ell}\setminus\eta_{\ell}})$ to express $\ip{ f, \Bop^{\ell}_{\veceta}(\vecs) g }$ as an integral, and use \eqref{e.1/p.1} with $(y,x)=(\yc,x_{i_*})$ in the integral.
Doing so gives
\begin{align}
	\big| \Ip{ f, \Bop^{\ell}_{\veceta}(\vecs) g } \big|
	\leq
	c\, (s_1+\ldots+s_{\ell})^{-1/p}
	\, \Ip{ f_1, \Bop^{\ell-1}_{\veceta}(\vecs)\, \Cop^{\omega_{\ell-1}}_{\eta_{\ell-1}}(s_\ell) |g| }\ ,
\end{align}
where $f_1(x'):=(\int_{\R^2} \d x_{i_*}|f(x)|^p)^{1/p}$ and $x'\in\R^{2\omega_{\ell-1}}$.
This implies $\norm{\Bop^{\ell}_{\vecalpha}(\vecs\,)}_{2 \to q,a}\leq 	c\, (s_1+\ldots+s_{\ell})^{-1/p}
\norm{\Bop^{\ell-1}_{\veceta}(\vecs)\, \Cop^{\omega_{\ell-1}}_{\eta_{\ell-1}}(s_\ell)}_{2 \to q,a}$.
Further using \eqref{e.key.bdCop.2} with $(\omega,\alpha)=(\omega_{\ell-1},\eta_{\ell-1})$ gives
\begin{align}
	\label{e.key.induction.1}
	\Norm{\Bop^{\ell}_{\veceta}(\vecs)}_{2 \to q,a}
	\leq
	\Norm{\Bop^{\ell-1}_{\veceta}(\vecs)}_{2\to q,a} 
	c s_{\ell}^{-1/2} e^{c s_{\ell}} 
	\cdot
	(s_1+\ldots+s_{\ell})^{-1/p}\ .
\end{align}
The same bound holds in Case~2 by applying \eqref{e.1/p.2} with $(\omega,\alpha)=(\eta_\ell,\eta_\ell)$ and \eqref{e.key.bdCop.2} with $(\omega,\alpha)=(\omega_{\ell-1},\eta_{\ell-1})$ to \eqref{e.key.Bop.case2}.
In \eqref{e.key.induction.1}, bounding $(s_1+\ldots+s_{\ell})^{-1/p}\leq s_{\ell}^{-1/p}$ and using the induction hypothesis conclude the bound \eqref{e.key.bd.Bop}.
Next, use \eqref{e.key.bd.Bop} for $\ell=n$ in \eqref{e.key.tilBop}.
For the heat semigroup in \eqref{e.key.tilBop}, using \eqref{e.heatcontract} for $p=2$ followed by \eqref{e.L2<Lpa} gives 
$
	|\ip{f,\heatsg^{\omega}(s)f'}|
	\leq
	c\, \norm{f}_{p,a} \norm{f'}_2
$,
which implies $\norm{\heatsg^{\omega}(s)}_{2\to q,a}\leq c$.
Hence,
\begin{align}
	\Norm{ \tilBop_{\veceta}(t) }_{2\to q,a}
	\leq
	c\, e^{ct}
	\int_{s_1+\ldots+s_n=t} 
	\d s_1\cdots \d s_{n-1} 
	(s_1\cdots s_{n})^{-1/p} \, (s_2\cdots s_{n})^{-1/2} \ .
\end{align}
Evaluating this integral gives the bound $\leq c\,e^{ct} t^{(n-1)/2-n/p}$.
Insert this bound and \eqref{e.key.bdCop.pa} into \eqref{e.key.newform}, evaluate the integral, and bound the size of the sum by $c(n)=c$ (see \eqref{e.dgmup}).
Doing so gives the desired result \eqref{e.key}.

\section{Convergence, proof of Proposition~\ref{p.conv}}
\label{s.exists}

We prove Proposition~\ref{p.conv} in steps.
In Section~\ref{s.exists.notation}, we prepare the notation and tools.
In Section~\ref{s.exists.ck}, we check that every limit point of $\{Z^{\theta,\e}\}_{\e}$ satisfies Definition~\ref{d.shf}\ref{d.shf.ck}--\ref{d.shf.mome}.
In Section~\ref{s.exists.conti}, we prove that (the law of) $\{Z^{\theta,\e}\}_{\e}$ is tight in $\Csp(\Rtwo,\Msp(\R^4))$.
Once these steps are completed, Proposition~\ref{p.conv} follows from Theorem~\ref{t.main}.

\subsection{Notation and tools}
\label{s.exists.notation}
Let us introduce the $\e$ analogs of the operators in Section~\ref{s.tools.deltabose}.
For $\alpha\in\pair(\omega)$, the operators map between functions on
\begin{align}
	\R^{2\omega} 
	&:= (\R^2)^\omega
	:= 
	\big\{ (x_i)_{i\in\omega} \, \big| \, x_i\in\R^2 \big\}
	\ ,
\\
	\label{e.ysp.}
	\R^{4}\times\R^{2\omega\setminus\alpha} 
	&:= 
	\big\{ y=(\yr, \yc, (y_i)_{i\in\omega\setminus\alpha}) \, \big| \, \yr, \yc ,y_i\in\R^2 \big\}
	\ ,
\end{align}
where we index the first two coordinates in \eqref{e.ysp.} respectively by $\textr$ and $\textc$ for ``relative'' and  ``center of mass''.
For $\alpha=ij$, consider the map
\begin{align}
	S^\e_{\alpha}: \R^{4}\times\R^{2\omega\setminus\alpha} \to \R^{2\omega},
	\qquad
	\big( S^\e_{\alpha} y \big)_{k}
	:=
	\begin{cases}
		\yc+\e\yr/2 & \text{when } k=i
	\\
		\yc-\e\yr/2 & \text{when } k=j
	\\
		y_{k} & \text{when } k\in\omega\setminus\alpha\ .
	\end{cases}
\end{align}
Recall $\Phi$ from \eqref{e.noiseMolli}, set $\phi:=\sqrt{\Phi}$, and view $\Phi$ and $\phi$ as multiplicative operators acting on $\Lsp^2(\R^{2})=\Lsp^2(\{\yr\})$.
For $\alpha,\alpha'\in\pair(\omega)$, define the operators 
\begin{subequations}
\label{e.ops.e}
\begin{align}
	\label{e.incoming.e}
	\heatsg^{\omega,\e}_{\alpha}(t,y,x)
	&:=
	\phi(\yr)\,
	\heatsg^{\omega}\big(t, S^{\e}_{\alpha}y - x \big)
	=:
	\big(\heatsg^{\omega,\e}_{\alpha}\big)^*(t,x,y)\ ,
\\
	\label{e.swapping.e}
	\heatsg^{\omega,\e}_{\alpha\alpha'}(t,y,y')
	&:=
	\phi(\yr)\,
	\heatsg^{\omega,\e}\big(t, S^{\e}_{\alpha}y - S^{\e}_{\alpha'}y' \big)\,
	\phi(\yr')\ ,
\\	
	\label{e.Jop.e}
	\Jop^{\omega,\e}_{\alpha}(t)
	&:=
	\sum_{k=1}^\infty \beta_\e^{k+1}
	\int_{\Sigma(t)} \d\vectau\,
	\heatsg^{\omega,\e}_{\alpha\alpha}(\tau_1)\cdots\heatsg^{\omega,\e}_{\alpha\alpha}(\tau_k)\ ,
\end{align}
\end{subequations}
where $x\in\R^{2\omega}$, $y\in\R^{4}\times\R^{2\omega\setminus\alpha}$, and $y'\in\R^{4}\times\R^{2\omega\setminus\alpha'}$.
Recall $\dgm(\omega)$ and $\dgm_*(\omega)$ from \eqref{e.dgm} and \eqref{e.sgg}.
The $\e$ analogs of $\sg^{\intv{n}}$ and $\sgg^{\intv{n}}$ are
\begin{align}
\label{e.sg.e}
	\sg^{\intv{n},\e}(t)
	&:=
	\heatsg^{\intv{n}}(t)
	+
	\sum_{\vecalpha\in\dgm(n)} 
	\sgsum^{\intv{n},\e}_{\vecalpha}(t)\ ,
	\qquad
	\sgg^{\intv{n},\e}(t)
	:=
	\sum_{\vecalpha\in\dgm_*(n)} 
	\sgsum^{\intv{n},\e}_{\vecalpha}(t)\ ,
\end{align}
where the summand $\sgsum^{\intv{n},\e}_{\vecalpha}(t)$ is 
\begin{align}
\begin{split}
	\sgsum^{\intv{n},\e}_{\vecalpha}(t)
	:=
	\int_{\Sigma(t)} 
	\hspace{-5pt}
	\d \vectau \,
	\heatsg^{\intv{n},\e}_{\alpha_{1}}(\tau_{\frac{1}{2}})^*
	&\prod_{k=1}^{|\vecalpha|-1} 
	\Big( \beta_\e \delta_{0}(\tau_k) + \Jop^{\intv{n},\e}_{\alpha_{k}}(\tau_{k}) \Big) \, 
	\heatsg^{\intv{n},\e}_{\alpha_{k}\alpha_{k+1}}(\tau_{k+\frac{1}{2}})
\\
	&\cdot
	\Big( \beta_\e \delta_{0}(\tau_{|\vecalpha|}) + \Jop^{\intv{n},\e}_{\alpha_{|\vecalpha|}}(\tau_{|\vecalpha|}) \Big)
	\, 
	\heatsg^{\intv{n},\e}_{\alpha_{|\vecalpha|}}(\tau_{|\vecalpha|+\frac{1}{2}})
	\ .
\end{split}
\end{align}

These operators enjoy the analog of \eqref{e.bds}.
For $p\in(1,\infty)$ and $a\in[0,\infty)$, let
\begin{align}
	\label{e.weightednorm.e}
	\norm{f}_{p,a}^p
	&=
	\norm{f}_{\Lsp^{p,a}(\Omega)}^p
	:=
	\begin{cases}
		\int_{\Omega} \d x \, \big| f(x)e^{a|x|_1} \big|^{p}
		& 
		\Omega = \R^{2\omega}\ ,
	\\
		\int_{\Omega} \d y \, \big| f(y)e^{a\sum_{\sigma=\pm 1}|\yc+\frac{\e\sigma}{2}\yr|_1+a\sum_{i\in\omega\setminus\alpha}|y_i|_1} \big|^{p}
		& 
		\Omega = \R^{4}\times\R^{2\omega\setminus\alpha}\ .
	\end{cases}	
\end{align}
We have slightly abused notation by using the same notation for the norms in \eqref{e.weightednorm} and in \eqref{e.weightednorm.e}.
This should not cause confusion, since the latter will only be applied to $\e$-dependent operators.
Define the operator norm $\norm{\cdot}_{p,a\to p',a'}$ the same way as in \eqref{e.operatornorm} with the norm in \eqref{e.weightednorm.e} replacing that of \eqref{e.weightednorm}.
The following bounds are proven in \cite[Section~3.1]{surendranath2024two}.
Given any finite $\omega\subset\Z_{>0}$, there exists $c=c(\phi,|\omega|,a)$ such that for all $t>0$ and $\alpha\neq\alpha'\in\pair(\omega)$,
\begin{subequations}
\label{e.bds.e}
\begin{align}
	\label{e.bd.incoming.e}
	\Norm{ \heatsg^{\omega,\e}_{\alpha}(t) }_{2 \to 2}
	&\leq
	c \, t^{-1/2}\ ,
	\quad
	\Norm{ \heatsg^{\omega,\e}_{\alpha}(t)^* }_{2 \to 2}
	\leq
	c \, t^{-1/2}\ ,
\\
	\label{e.bd.swapping.e}
	\Norm{ \heatsg^{\omega,\e}_{\alpha\alpha'}(t) }_{2 \to 2}
	&\leq
	c \, t^{-1}\ ,
\\
	\label{e.bd.swapping.int.e}
	\NOrm{ \int_0^\infty \d t\, e^{-t} \heatsg^{\omega,\e}_{\alpha\alpha'}(t) }_{2 \to 2}
	&\leq
	c\ ,
\\
	\label{e.bd.Jop.e}
	\Norm{ \Jop^{\omega,\e}_{\alpha}(t) }_{2 \to 2}
	&\leq
	c \, t^{-1} \, \big|\log\big(t\wedge \tfrac{1}{2}\big) \big|^{-2} \cdot e^{c\,t}\ .
\end{align}
\end{subequations}
Generalization to the $p,a\to p, a$ norm may be considered, like in \eqref{e.bds}, but our proof of Proposition~\ref{p.conv} requires only the $2\to 2$ norm.
Combining \eqref{e.bds.e} and Lemma~\ref{l.sum}\ref{l.sum.1} for $c_1=\beta_\e$ and a large enough $\lambda$ gives that, for every $p\in(1,\infty), a\in[0,\infty)$, and $\e\leq 1/c$, the sum in \eqref{e.sg.e} converges absolutely in the $p,a\to p,a$ operator norm, with
\begin{align}
	\label{e.sg.e.bd}
	\Norm{\sg^{\intv{n},\e}(t)}_{2\to 2}
	\leq
	c \, e^{ct} 
	\qquad
	c=c(\phi,n,a)\ .
\end{align}

The solution of the mollified SHE enjoys a moment formula.
Recall that $\Zfn^{\theta,\e}_{s,t}(x',x)$ denotes the fundamental solution of \eqref{e.mollifiedshe}, and let $\Wfn^{\e}_{s,t}(x',x):=\Zfn^{\theta,\e}_{s,t}(x',x)-\hk(t-s,x'-x)$.
For every $s<t$, $n\in\Z_{>0}$, $x=(x_1,\ldots,x_n), x'=(x'_1,\ldots,x'_n)\in\R^{2\intv{n}}$,
\begin{align}
	\label{e.mome.e}
	\E\Big[\prod_{i=1}^n \Zfn^{\theta,\e}_{s,t}(x_i,x'_i)\Big]
	=
	\sg^{\intv{n},\e}(t-s,x,x')\ ,
	\ \
	\E\Big[\prod_{i=1}^n \Wfn^{\e}_{s,t}(x_i,x'_i)\Big]
	=
	\sgg^{\intv{n},\e}(t-s,x,x')\ .	
\end{align}
To prove \eqref{e.mome.e}, for $\alpha=ij$ and $x\in\R^{2\intv{n}}$, let $\Phi^{\e}_{\alpha}(x):=\beta_{\e}\Phi((x_i-x_j)/\e)\e^{-2}$ and assume $s=0$ to simplify notation.
By the arguments in \cite[Theorem~5.3]{hu2009stochastic}, moments of $\Zfn^{\theta,\e}_{s,t}(x,x')$ enjoys the Feynman--Kac formula
\begin{align}
	\label{e.fk}
	\E\Big[\prod_{i=1}^n \Zfn^{\theta,\e}_{0,t}(x_i,x'_i)\Big]
	=
	\EE_\mathrm{BM}\Big[ \exp\Big( 
		\int_0^t \d u \sum_{\eta\in\pair\intv{n}} \Phi^{\e}_{\eta}(X(u))
	\Big) 
	\prod_{i=1}^n \delta_{x'_i}(X_i(t))
	\Big]\ ,
\end{align}
where $X_1,\ldots,X_n$ are independent Brownian motions on $\R^2$ with $X_i(s)=x_i$.
Taylor expand the exponential in \eqref{e.fk}.
The $\ell$th term of the result involves the integral $\frac{1}{\ell!}\prod_{i=1}^\ell\int_{s}^t \d u_i$. 
Rewrite the integral as $\int_{0<u_1<\ldots<u_{\ell}<t} \d \vec{u}$, rename $u_i-u_{i-1}$ to $s_i$, and further rewrite the integral as $\int_{\Sigma(t)}\d\vecs$.
The result can be written as the kernel of
\begin{align}
	\label{e.mome.e.pf}
	\heatsg^{\intv{n}}(t)
	+
	\sum_{\veceta}
	\beta_\e^{|\veceta|}
	\int_{\Sigma(t)} \d \vecs\,
	\heatsg^{\intv{n},\e}_{\eta_1}(s_0)^*\,
	\heatsg^{\intv{n},\e}_{\eta_1\eta_2}(s_1)\,
	\cdots
	\heatsg^{\intv{n},\e}_{\eta_{|\veceta|-1}\eta_{|\veceta|}}(s_{|\veceta|-1})\,
	\heatsg^{\intv{n},\e}_{\eta_{|\veceta|}}(s_{|\veceta|})
	\ ,
\end{align}
where the sum runs over $\veceta\in\cup_{m=1}^\infty(\pair\intv{n})^m$.
Rewrite $\veceta$ as $(\alpha_1^{k_1},\alpha_2^{k_2},\ldots)$, where $\alpha^k:=(\alpha,\ldots,\alpha)\in(\pair\intv{n})^k$, and $\alpha_1\neq\alpha_2$, $\alpha_2\neq\alpha_3$, \ldots.
The previous sum can then be written as the sums over $\vecalpha\in\dgm\intv{n}$ and over $k_1,\ldots,k_{|\vecalpha|}\in\Z_{>0}$.
Carrying out the latter sum turns the operator \eqref{e.mome.e.pf} into $\sum_{\vecalpha\in\dgm_*(n)}\sgsum^{\intv{n},\e}_{\vecalpha}(t)$.
This proves the first formula in \eqref{e.mome.e}.
The second formula can be proven from the first the same way as proving \eqref{e.Wmome}.

\subsection{Checking the limiting properties}
\label{s.exists.ck}
Let us check that every limit point of $\{Z^{\theta,\e}\}_{\e}$ satisfies Definition~\ref{d.shf}\ref{d.shf.ck}--\ref{d.shf.mome}.
Part~\ref{d.shf.mome} follows from Proposition~\ref{p.GQT}.
Part~\ref{d.shf.inde} follows because $Z^{\theta,\e}_{t_0,t_1},\ldots,Z^{\theta,\e}_{t_{k-1},t_{k}}$ are independent, which holds because they are respectively measurable with respect to the sigma algebras generated by the noise $\xi$ restricted to $[t_0,t_1]\times\R^2,\ldots,[t_{k-1},t_{k}]\times\R^2$.
Finally, to check Part~\ref{d.shf.ck},  fixing any $\Lsp^2$ applicable mollifier $\{\molli_{\ell}\}_{\ell}$, $2n\in\Z_{>0}$, and $g,g'\in\Ccsp(\R^2)$, we seek to prove that
\begin{align}
	\label{e.convg.ck}
	\lim_{\ell\to\infty}
	\lim_{\e\to 0}
	\E\big[ \big( \big( Z^{\theta,\e}_{s,u} - Z^{\theta,\e}_{s,t}\mprod_{\molli_{\ell}} Z^{\theta,\e}_{t,u} \big) g\otimes g' \big)^{2n} \big]
	=
	0\ .
\end{align}
Once proven, this equality together with Proposition~\ref{p.GQT} implies that every limit point of $\{Z^{\theta,\e}\}_{\e}$ satisfies Part~\ref{d.shf.ck}.
To prove \eqref{e.convg.ck}, first expand the expectation in \eqref{e.convg.ck} into
\begin{align}
	\label{e.Ze.ck.1}
	\E\big[ \big( \big( Z^{\theta,\e}_{s,u} - Z^{\theta,\e}_{s,t}\mprod_{\molli_{\ell}} Z^{\theta,\e}_{t,u} \big)(g\otimes g') \big)^{2n} \big]
	=
	\sum_{k=0}^{2n}\binom{2n}{k}(-1)^k A_{k,\ell,\e}\ ,
\end{align}
where 
$
	A_{k,\ell,\e}
	:=
	\E[ (Z^{\theta,\e}_{s,u}g\otimes g')^{2n-k} (Z^{\theta,\e}_{s,t}\mprod_{\molli_\ell}Z^{\theta,\e}_{t,u}g\otimes g')^{2n-k}].	
$
Next, note that, since $\Zfn^{\theta,\e}_{s,t}(x,x')$ is the fundamental solution of \eqref{e.mollifiedshe}, for every $s<t<u$,
\begin{align}
	\label{e.Ze.ck}
	Z^{\theta,\e}_{s,u} 
	:=
	\d x\otimes \d x' \,
	\Zfn^{\theta,\e}_{s,u}(x,x')
	=
	\d x\otimes \d x' \,
	\Big( \int_{\R^2} \d x'' \, \Zfn^{\theta,\e}_{s,t}(x,x'')	\Zfn^{\theta,\e}_{t,u}(x'',x') \Big)
	=:
	Z^{\theta,\e}_{s,t} \, Z^{\theta,\e}_{t,u}\ .
\end{align}
Use \eqref{e.Ze.ck}, \eqref{e.mome.e}, and the independence of $Z^{\theta,\e}_{s,t}$ and $Z^{\theta,\e}_{t,u}$ to evaluate $A_{k,\ell,\e}$.
Doing so gives
\begin{align}
	A_{k,\ell,\e}
	=
	\Ip{
		g^{\otimes 2n},
		\sg^{\intv{2n},\e}(t-s) \,
		\big( 
			\molli_{\ell}^\star{}^{\otimes k}
			\otimes
			\ind^{\otimes(2n-k)}
		\big)
		\sg^{\intv{2n},\e}(u-t) \,
		g'{}^{\otimes 2n}
	}\ ,
\end{align}
where $\molli^{\star}_{\ell}$ acts on $\Lsp^2(\R^2)$ by convolution $\molli^{\star}_{\ell}\, \psi:= \molli_{\ell}\star \psi$.
Using \eqref{e.sg.e.bd} and Proposition~\ref{p.GQT} to take the $\e\to 0$ limit gives
$
	\ip{
		g^{\otimes 2n},
		\sg^{\intv{2n}}(t-s) \,
		( 
			\molli_{\ell}^\star{}^{\otimes k}
			\otimes
			\ind^{\otimes(2n-k)}
		)
		\sg^{\intv{2n}}(u-t)\ ,
		g'{}^{\otimes 2n}
	}
$.
Since $\{\molli_{\ell}\}_{\ell}$ is an $\Lsp^2$ applicable mollifier, $\molli_{\ell}^\star\to \ind$ strongly on $\Lsp^2(\R^2)$ as $\ell\to\infty$.
Hence
\begin{align}
	\lim_{\ell\to\infty}
	\lim_{\e\to 0}
	A_{k,\ell,\e}
	=
	\Ip{
		g^{\otimes 2n},
		\sg^{\intv{2n}}(t-s) \,
		\sg^{\intv{2n}}(u-t) \,
		g'{}^{\otimes 2n}
	}\ .
\end{align}
Inserting this into \eqref{e.Ze.ck.1} gives the desired result \eqref{e.convg.ck}.

\subsection{Tightness}
\label{s.exists.conti}
We start by proving Lemma~\ref{l.key.e}, which is a slightly weaker version of the $\e$ analog of Lemma~\ref{l.key}.
Slightly weaker in the sense that Lemma~\ref{l.key.e} has the $2\to q,a$ and $p,a\to 2$ norms instead of the $p,a\to q, a$ norm and has a smaller power of $t$ on the right-hand side.

\begin{lem}\label{l.key.e}
For any $2n\in2\Z_{>0}$, $p\in(2,\infty)$, and $a\in(0,\infty)$,
with $1/p+1/q=1$,
\begin{align}
	\label{e.key.e}
	\Norm{ \sgg^{\intv{2n},\e}(t) }_{p,a\to 2}
	=
	\Norm{ \sgg^{\intv{2n},\e}(t) }_{2\to q,a}
	\leq
	c \, e^{ct} t^{\frac{n}{2}(1-\frac{2}{p})}
	\ , 
	\qquad
	c=c(n,\phi,p,a)\ .
\end{align}
\end{lem}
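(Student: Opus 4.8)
The plan is to mirror the proof of Lemma~\ref{l.key}, but working with the $\e$-operators and splitting the estimate at an endpoint rather than in the middle. First I would recall that $\sgg^{\intv{2n},\e}(t)=\sum_{\vecalpha\in\dgm_*(n)}\sgsum^{\intv{2n},\e}_{\vecalpha}(t)$, and that each $\sgsum^{\intv{2n},\e}_{\vecalpha}$ is, after absorbing the geometric sums in $\Jop^{\intv{2n},\e}_{\alpha}$ exactly as in \eqref{e.mome.e.pf}--\eqref{e.sg.e}, an alternating product of operators $\heatsg^{\omega,\e}_{\alpha}(t)^*$, $\heatsg^{\omega,\e}_{\alpha\alpha'}(t)$, $\Jop^{\omega,\e}_{\alpha}(t)$ of the same combinatorial shape as in the $\e=0$ case. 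The structural fact that $\dgm_*(n)$ forces the "coordinate sets" $\omega'_k=\alpha_1\cup\cdots\cup\alpha_k$ to strictly increase at $n$ distinct steps $1=k_1<\cdots<k_n$ is purely combinatorial and carries over verbatim; this gives the decomposition into a "growing" part indexed by $\veceta\in\dgm_\uparrow\intv{2n}$ (Case~1 merging one new coordinate, Case~2 merging two new coordinates) tensored against heat kernels on the not-yet-touched coordinates. So I would again write $\sgg^{\intv{2n},\e}(t)=\sum_{\veceta}\int_{s+s'=t}\tilBop^{\e}_{\veceta}(s)\tilCop^{\e}_{\veceta}(s')\,\d s$ with the $\e$-analogues of the operators $\Bop^{\ell}$, $\Cop^\omega_\alpha$, $\tilBop_{\veceta}$, $\tilCop_{\veceta}$ defined by the obvious substitutions $\heatsg\to\heatsg^\e$, $\Jop\to\Jop^\e+\beta_\e\delta_0\ind$.

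The estimates then run as follows. Using \eqref{e.bds.e} in the $2\to 2$ norm together with Lemma~\ref{l.sum}\ref{l.sum.2} (now with $c_1=\beta_\e$, which is bounded for small $\e$, and a large $\lambda$) gives $\norm{\Cop^{\omega,\e}_\alpha(t)}_{2\to 2}\le c e^{ct}t^{-1/2}$ and, analogously, $\norm{\tilCop^{\e}_{\veceta}(t)}_{2\to 2}\le c e^{ct}t^{-1/2}$ — note this is where the weaker conclusion comes from: since I only have $2\to 2$ bounds on the $\e$-operators, I cannot exploit the $p,a\to 2$ gain on $\tilCop$ that improved the exponent in Lemma~\ref{l.key}. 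For the $\Bop^{\ell,\e}$ I would induct on $\ell$ exactly as for \eqref{e.key.bd.Bop}, using the Chapman--Kolmogorov/heat-kernel bound \eqref{e.1/p.1} at the $\yc\!-\!x_{i_*}$ or $\yc\!-\!\yc$ junction in Case~1/Case~2 to extract a factor $(s_1+\cdots+s_\ell)^{-1/p}$, then feeding in the $\Cop^{\omega,\e}$ bound; the base case $\ell=1$ uses $\norm{\heatsg^{\intv{2},\e}_{\eta_1}(s_1)^*}_{2\to 2}\le c s_1^{-1/2}$ together with the $\Lsp^{p,a}\hookleftarrow\Lsp^2$-type control, which here I would route through $\norm{\cdot}_{2\to q,a}$ and the embedding \eqref{e.L2<Lpa}. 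This yields $\norm{\Bop^{\ell,\e}_{\veceta}(\vecs)}_{2\to q,a}\le c(s_1\cdots s_\ell)^{-1/p}(s_2\cdots s_\ell)^{-1/2}e^{c(s_1+\cdots+s_\ell)}$, hence after the $\Sigma(t)$-integral $\norm{\tilBop^{\e}_{\veceta}(t)}_{2\to q,a}\le c e^{ct}t^{(n-1)/2-n/p}$, and combining with $\norm{\tilCop^{\e}_{\veceta}(t)}_{2\to 2}\le c e^{ct}t^{-1/2}$, integrating $\int_{s+s'=t}s^{(n-1)/2-n/p}s'^{-1/2}\d s=c\,t^{(n-1)/2-n/p+1/2}=c\,t^{n/2-n/p}=c\,t^{\frac n2(1-\frac2p)}$, and bounding the (finite, $n$-dependent) number of terms in the sum gives \eqref{e.key.e}. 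The identity $\norm{\sgg^{\intv{2n},\e}(t)}_{p,a\to 2}=\norm{\sgg^{\intv{2n},\e}(t)}_{2\to q,a}$ is just the duality in \eqref{e.operatornorm} together with the fact (provable as for \eqref{e.Wmome}/\eqref{e.sgg}) that $\sgg^{\intv{2n},\e}(t)$ has a symmetric kernel under swapping $x\leftrightarrow x'$ composed with the reflection built into $S^\e_\alpha$; alternatively one simply proves both bounds directly, which is cleaner.

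The main obstacle I anticipate is bookkeeping rather than a genuinely new idea: I must verify that the $\e$-operators really do satisfy the hypotheses of Lemma~\ref{l.sum} uniformly in $\e$ (the role of $b,b'\in(-1,0]$ played by the $t^{-1/2}$ endpoint factors, the $\beta_\e\delta_0$ term giving the $c_1$ in Lemma~\ref{l.sum}, and the $e^{ct}$ factors staying uniform), and that the geometric series defining $\Jop^{\omega,\e}_\alpha$ and the inner sums in $\tilCop^{\e}$ converge in operator norm for $\e$ small — this follows from \eqref{e.bds.e} and Lemma~\ref{l.sum} but needs to be stated carefully so that all constants depend only on $(n,\phi,p,a)$ and not on $\e$. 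A secondary point to handle with care is the base case of the $\Bop^\e$ induction, where in the $\e=0$ proof one used the mapping-property \eqref{e.1/p.2} $\heatsg^\omega_\alpha(t)\colon\Lsp^{p,a}\to\Lsp^2$; the $\e$-analogue of that precise bound is not among \eqref{e.bds.e}, so I would instead keep the $\Lsp^2$ domain throughout and only pass to $\Lsp^{q,a}$ on the very last (output) heat-kernel tensor factor via \eqref{e.L2<Lpa}, which is exactly why the statement is phrased with the $2\to q,a$ (equivalently $p,a\to 2$) norm. Everything else is a routine transcription of Section~\ref{s.unique.key}.
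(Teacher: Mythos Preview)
Your overall plan is the same as the paper's: reorganize $\sgg^{\intv{2n},\e}(t)$ along $\veceta\in\dgm_\uparrow\intv{2n}$ into $\int_{s+s'=t}\tilBop^{\e}_{\veceta}(s)\tilCop^{\e}_{\veceta}(s')\,\d s$, bound $\tilCop^{\e}$ in the $2\to 2$ norm via \eqref{e.bds.e} and Lemma~\ref{l.sum}\ref{l.sum.2}, and bound $\tilBop^{\e}$ in the $2\to q,a$ norm by induction on $\ell$. That part is correct and matches the paper.

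The gap is in your handling of the base case $\ell=1$ (and, by the same mechanism, Case~2 of the induction). You propose to use only $\norm{\heatsg^{\eta_1,\e}_{\eta_1}(s_1)^*}_{2\to 2}\le c\,s_1^{-1/2}$ together with the embedding \eqref{e.L2<Lpa}, which indeed yields $\norm{\Bop^{1,\e}}_{2\to q,a}\le c\,s_1^{-1/2}$. But the bound you then claim, $\norm{\Bop^{\ell,\e}}_{2\to q,a}\le c\,(s_1\cdots s_\ell)^{-1/p}(s_2\cdots s_\ell)^{-1/2}$, requires $s_1^{-1/p}$ at $\ell=1$. With your base case you would instead get $s_1^{-1/2}\prod_{\ell\ge 2}s_\ell^{-1/2-1/p}$, and after the simplex integral and the convolution with $\tilCop^{\e}$ this produces $t^{\frac{n-1}{2}(1-\frac{2}{p})}$ rather than the stated $t^{\frac{n}{2}(1-\frac{2}{p})}$. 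So the proposed shortcut loses exactly one factor of $t^{\frac12(1-\frac2p)}$ and does not prove \eqref{e.key.e} as written.

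The paper does not avoid the $\e$-analog of \eqref{e.1/p.2}; it proves it (this is \eqref{e.1/p.2.e}, $\norm{\heatsg^{\omega,\e}_{\alpha}(t)}_{p,a\to 2}\le c\,t^{-1/p}$) by precisely the device you already invoke in Case~1: apply \eqref{e.1/p.1} to one of the two heat-kernel factors in $\heatsg^{\eta,\e}_{\eta}(t)^*$, absorb $\phi(\yr)$ by Cauchy--Schwarz, and use \eqref{e.L2<Lpa} on the remaining coordinates. The same bound is what is needed in Case~2, where the new tensor factor is $\heatsg^{\eta_\ell,\e}_{\eta_\ell}(s_1+\cdots+s_\ell)^*$, not a bare heat kernel. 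In short, the ``avoidance'' you describe is illusory: the $-1/p$ gain at each new merge, including the first, is the content of \eqref{e.1/p.2.e}, and you must prove it (which is a two-line computation with \eqref{e.1/p.1}) rather than fall back to the $2\to2$ bound.
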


\begin{proof}
As is readily checked from \eqref{e.sg.e}, $\sgg^{\intv{2n},\e}(t)$ is symmetric, so the equality follows.

To prove the inequality, we follow the argument in Lemma~\ref{l.key}.
Let $\Cop^{\omega,\e}_{\alpha}(t)$ be defined by replacing $\heatsg^{\omega}_{\alpha}(s)$, $\heatsg^{\omega}_{\alpha\alpha'}(s)$, and $\Jop^{\omega}_{\alpha}(s)$ respectively with $\heatsg^{\omega,\e}_{\alpha}(s)$, $\heatsg^{\omega,\e}_{\alpha\alpha'}(s)$, and $\Jop^{\omega,\e}_{\alpha}(s)$ in \eqref{e.key.Cop}.
Let $\Bop^{1}_{\eta_1}(s_1):=\heatsg^{\eta_1,\e}_{\eta_1}(s_1)^*$, and, for $\ell=2,\ldots,n$, define $\Bop^{\ell,\e}_{\veceta}(\vecs)$ in Cases~1 and 2 (see after \eqref{e.key.Bop1}) respectively as
\begin{align}
	\label{e.key.Bop.case1.e}
\begin{split}
	\Bop^{\ell,\e}_{\veceta}(\vecs,y,x) 
	&:=
	\big( \Bop^{\ell-1,\e}_{\veceta}(\vecs) \Cop^{\omega_{\ell-1},\e}_{\eta_{\ell-1}}(s_\ell) \big)
	(x_{\omega_{\ell-1}},y_{\omega_{\ell-1},j_* \leftarrow \mathrm{c,r}})\,
\\
	&\qquad\cdot
	\hk(s_1+\ldots+s_\ell,x_{i_*}-\yc-\tfrac{\e\yr}{2})\,\phi(\yr)\ ,
\end{split}
\\
	\label{e.key.Bop.case2.e}
	\Bop^{\ell,\e}_{\veceta}(\vecs)
	&:=
	\Bop^{\ell-1,\e}_{\veceta}(\vecs) \Cop^{\omega_{\ell-1},\e}_{\eta_{\ell-1}}(s_\ell) 
	\otimes 
	\heatsg^{\eta_\ell,\e}_{\eta_\ell}(s_1+\ldots+s_\ell)^* \ ,
\end{align}
where $y_{\omega,j \leftarrow \mathrm{c,r}}$ means taking $y_\omega=(y_i)_{i\in\omega}$ and replacing $y_{j}$ with $\yc-\e\yr/2$.
Let 
\begin{align}
	\label{e.key.tilBop.e}
	\tilBop^{\e}_{\veceta}(t)
	&:=
	\int_{\Sigma(t)} \d\vecs \, 
	\Bop^{n,\e}_{\veceta}(s_1,\ldots,s_{n})
	\otimes
	\heatsg^{\intv{2n}\setminus(\eta_1\cup\cdots\cup\eta_n)}(s_1+\ldots+s_{n})\ ,
\\
	\tilCop^{\e}_{\veceta}(t)
	&:=
	\sum \int_{\Sigma(s)} \d \vectau\,
	\prod_{k=1}^{|\vecgamma|-1}
	\Jop^{\omega,\e}_{\gamma_k}(\tau_{k})\,
	\heatsg^{\omega,\e}_{\gamma_k\gamma_{k+1}}(\tau_{k+1/2})\,
	\cdot 
	\Jop^{\omega,\e}_{\gamma_{|\vecgamma|}}(\tau_{|\vecgamma|})\,
	\heatsg^{\omega,\e}_{\gamma_{|\vecgamma|}}(\tau_{|\vecgamma|+1/2})\ ,
\end{align}
where the last sum runs over $\vecgamma\in\dgm\intv{n}$ under the constraint that $\eta_1\cup\cdots\eta_n\cup\gamma_1\cup\cdots\gamma_{|\gamma|}=\intv{2n}$.
The same argument that leads to \eqref{e.key.newform} gives its $\e$ analog 
\begin{align}
	\label{e.key.newform.e}
	\sgg^{\intv{2n},\e}(t)
	=
	\sum_{\veceta\in\dgm_{\uparrow}\intv{2n}}
	\int_{s+s'=t} \d s \, 
	\,
	\tilBop^{\e}_{\veceta}(s)\,
	\tilCop^{\e}_{\veceta}(s')\ .
\end{align}

To proceed, we need a few bounds.
Write $c=c(n,p,a,\phi)$ hereafter.
First, combining \eqref{e.bds.e} and Lemma~\ref{l.sum}\ref{l.sum.2} for $c_1=\beta_\e$ and a large enough $\lambda$ gives
\begin{align}
	\label{e.key.bdCop.e}
	\Norm{ \Cop^{\omega,\e}_{\alpha}(t) }_{2 \to 2}
	\leq
	c e^{ct} t^{-1/2}\ .
\end{align}
Next, let us prove that
\begin{align}
	\label{e.1/p.2.e}
	&\Norm{ \heatsg^{\omega,\e}_{\alpha}(t)^* }_{2\to q,a }
	=
	\Norm{ \heatsg^{\omega,\e}_{\alpha}(t) }_{p,a\to 2}
	\leq 
	c\, t^{-1/p} \ ,
	p\in(2,\infty), a\in(0,\infty)\ .
\end{align}
To this end, for $g\in\Lsp^{2}(\R^{4}\times\R^{2\omega\setminus\alpha})$ and $f\in\Lsp^{p,a}(\R^{2\omega})$, write
\begin{align}
\begin{split}
	\ip{&g,\heatsg^{\omega,\e}_{\alpha}(t) f}
	=
	\int_{\R^4} \d \yr \d \yc \,
	\phi(\yr)
	\int_{\R^2} \d x_i\,\hk(t,\yc-x_i-\e\yr/r)
\\
	&\cdot
	\int_{\R^2} \d x_j\,\hk(t,\yc-x_j+\e\yr/r)	
	\prod_{k\in\omega\setminus\alpha} \int_{\R^4} \d y_k\d x_k\,\hk(t,y_k-x_k) \cdot
	g(y)\,f(x)\ .
\end{split}
\end{align}
Use \eqref{e.1/p.1} in the $x_i$ integral and perform the change of variables $\yc+\e\yr/2\mapsto\yc$.
Doing so gives
\begin{align}
	\label{e.temp1}
	\big| \ip{g,\heatsg^{\omega,\e}_{\alpha}(t) f} \big|
	\leq
	c\, t^{-1/p} \Ip{ g_1, \heatsg^{\omega\setminus\{i\}}(t) f_1 }\ ,
\end{align}
where $f_1(x'):=(\int_{\R^2}\d x_i |f(x)|^{p})^{1/p}$, $x'\in\R^{2\omega\setminus\{i\}}$, and $g_1(y):=\int_{\R^2}\d\yr\,\phi(\yr) g(\yr,\yc-\e \yr/2,y_{\omega\setminus\alpha})$.
By the Cauchy--Schwarz inequality, $\norm{g_1}_2\leq \norm{\phi}_2\norm{g}_2=\norm{g}_2$.
By \eqref{e.L2<Lpa}, $\norm{f_1}_{2}\leq c\norm{f_1}_{p,a}$, which is $\leq c\,\norm{f}_{p,a}$ by the definitions of $f_1$ and $\norm{\cdot}_{p,a}$.
Using these bounds and \eqref{e.heatcontract} for $p=2$ in \eqref{e.temp1} proves \eqref{e.1/p.2.e}.
Next, let us use induction to prove, for $\ell=1,\ldots,n$,
\begin{align}
	\label{e.key.bd.Bop.e}
	\norm{\Bop^{\ell,\e}_{\vecalpha}(\vecs\,)}_{2 \to q,a} 
	\leq
	c \, (s_1\cdots s_{\ell})^{-1/p} \, (s_2 \cdots s_{\ell})^{-1/2} \, e^{c\,(s_1+\ldots+s_\ell)}\ .
\end{align}
The $\ell=1$ bound follows from \eqref{e.1/p.2.e}.
Assume that the bound holds for $\ell-1$.
In Case~1, apply the right-hand side of \eqref{e.key.Bop.case1.e} to the test functions $f\in\Lsp^{p,a}(\R^{2\omega_{\ell}})$ and $g\in\Lsp^2(\R^4\times \R^{2\omega_{\ell}\setminus\eta_{\ell}})$ to express $\ip{ f,\Bop^{\ell}_{\veceta}(\vecs)g }$ as an integral.
Similarly to \eqref{e.temp1}, we bound the integral as
\begin{align}
	\big| \Ip{ f, \Bop^{\ell,\e}_{\veceta}(\vecs) g } \big|
	\leq
	c\, (s_1+\ldots+s_{\ell})^{-1/p}
	\, \Ip{ f_1, \Bop^{\ell-1,\e}_{\veceta}(\vecs)\, \Cop^{\omega_{\ell-1},\e}_{\eta_{\ell-1}}(s_\ell) g_1 }\ ,
\end{align}
where $f_1(x'):=(\int_{\R^2}\d x_{i_*} |f(x)|^{p})^{1/p}$, $x'\in\R^{2\omega\setminus\{i_*\}}$, and $g_1(y):=\int_{\R^2}\d\yr\,\phi(\yr) g(\yr,\yc-\e \yr/2,y_{\omega_{\ell}\setminus\alpha})$.
Since $\norm{f_1}_{p}=\norm{f}_{p}\leq\norm{f}_{p,a}$ (by definition) and since $\norm{g_1}_2\leq \norm{\phi}_2\norm{g}_2=\norm{g}_2$ (by Cauchy--Schwarz), we have $\norm{\Bop^{\ell,\e}_{\vecalpha}(\vecs\,)}_{2 \to q,a}\leq c\, (s_1+\ldots+s_{\ell})^{-1/p}
\norm{\Bop^{\ell-1,\e}_{\veceta}(\vecs)\, \Cop^{\omega_{\ell-1},\e}_{\eta_{\ell-1}}(s_\ell)}_{2 \to q,a}$.
Further using \eqref{e.key.bdCop.e} with $(\omega,\alpha)=(\omega_{\ell-1},\eta_{\ell-1})$ gives
\begin{align}
	\label{e.key.induction.1.e}
	\Norm{\Bop^{\ell,\e}_{\veceta}(\vecs)}_{2 \to q,a}
	\leq
	\Norm{\Bop^{\ell-1,\e}_{\veceta}(\vecs)}_{2\to q,a} 
	c s_{\ell}^{-1/2} e^{c s_{\ell}} 
	\cdot
	(s_1+\ldots+s_{\ell})^{-1/p}\ .
\end{align}
The same bound holds in Case~2 by applying \eqref{e.1/p.2.e} with $(\omega,\alpha)=(\eta_\ell,\eta_\ell)$ and \eqref{e.key.bdCop.e} with $(\omega,\alpha)=(\omega_{\ell-1},\eta_{\ell-1})$ to \eqref{e.key.Bop.case2.e}.
In \eqref{e.key.induction.1.e}, bounding $(s_1+\ldots+s_{\ell})^{-1/p}\leq s_{\ell}^{-1/p}$ and using the induction hypothesis conclude \eqref{e.key.bd.Bop.e}.

Let us complete the proof.
Use \eqref{e.key.bd.Bop.e} for $\ell=n$ in \eqref{e.key.tilBop.e}.
For the heat semigroup in \eqref{e.key.tilBop.e}, using \eqref{e.heatcontract}--\eqref{e.L2<Lpa} gives 
$
	|\ip{f,\heatsg^{\omega}(s)f'}|
	\leq
	c\, \norm{f}_{p,a} \norm{f'}_2
$,
which implies $\norm{\heatsg^{\omega}(s)}_{2\to q,a}\leq c$.
Hence
\begin{align}
	\Norm{ \tilBop^{\e}_{\alpha}(t) }_{2\to q,a}
	\leq
	c\, e^{ct}
	\int_{s_1+\ldots+s_n=t} 
	\hspace{-20pt}
	\d s_1\cdots \d s_{n-1} 
	(s_1\cdots s_{n})^{-1/p} \, (s_2\cdots s_{n})^{-1/2} \ .
\end{align}
Evaluate this integral gives the bound $\leq c\, e^{ct} t^{(n-1)/2-n/p}$.
Insert this bound and \eqref{e.key.bdCop.e} into \eqref{e.key.newform.e}, evaluate the integral, and bound the size of the sum y $c(n)=c$ (see \eqref{e.dgmup}). 
Doing so gives the desired result \eqref{e.key.e}.
\end{proof}

To prove the tightness of $\{Z^{\theta,\e}\}_{\e}$, recall that $\metricS\subset\Ccsp^2(\R^2)$ is countable and satisfies \eqref{e.metricS}.
Similar to Mitoma's theorem, the following criterion holds, which we prove in the appendix.
Equip $\Csp(\Rtwo,\R)$ with the uniform-over-compact topology.
\begin{lem}\label{l.motima}
(The law of) a set of $\Csp(\Rtwo,\Msp(\R^4))$-valued random variables $\{Z^{\theta,\e}\}_{\e\in(0,1]}$ is tight if $\{Z^{\theta,\e} g\otimes g'\}_{\e}$ is tight in $\Csp(\Rtwo,\R)$ for every $g,g'\in\metricS$.
\end{lem}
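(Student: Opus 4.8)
The plan is to verify the tightness condition directly, in the style of Mitoma's theorem: for each $\delta>0$ I would exhibit a compact set $\mathcal{K}_\delta\subset\Csp(\Rtwo,\Msp(\R^4))$ with $\inf_{\e\in(0,1]}\P[Z^{\theta,\e}\in\mathcal{K}_\delta]\geq 1-\delta$. Since $\metricS$ is countable, I would first enumerate the countably many pairs from $\metricS\times\metricS$ as $\{(g_m,g'_m)\}_{m\geq 1}$. Each $g_m\otimes g'_m$ lies in $\Ccsp(\R^4)$, and the map $\Pi_m\colon\mu\mapsto\big((s,t)\mapsto\mu_{s,t}(g_m\otimes g'_m)\big)$ is continuous from $\Csp(\Rtwo,\Msp(\R^4))$ (with the topology metrized by $\metricc$ of \eqref{e.metricc}) into $\Csp(\Rtwo,\R)$: if $(g_m,g'_m)=(g_{\metricS,i},g_{\metricS,j})$, then \eqref{e.metric} gives $2^{-i-j}\wedge\big|(\mu-\nu)(g_m\otimes g'_m)\big|\leq\metric(\mu,\nu)$, so uniform-on-compact $\metric$-convergence forces uniform-on-compact convergence of the scalar functions. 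By hypothesis $\{\Pi_m(Z^{\theta,\e})\}_\e=\{Z^{\theta,\e}g_m\otimes g'_m\}_\e$ is tight in $\Csp(\Rtwo,\R)$, so I would pick a compact $K_m\subset\Csp(\Rtwo,\R)$ with $\inf_\e\P[\Pi_m(Z^{\theta,\e})\in K_m]\geq 1-\delta 2^{-m}$ and set $\mathcal{K}_\delta:=\bigcap_{m\geq1}\Pi_m^{-1}(K_m)$; a union bound over $m$ then gives the probability lower bound, and $\mathcal{K}_\delta$ is closed as an intersection of preimages of closed sets under continuous maps.

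The heart of the argument is to show $\mathcal{K}_\delta$ is compact, which I would do via the Arzel\`a--Ascoli theorem for maps from the locally compact $\sigma$-compact space $\Rtwo$ into the complete metric space $(\Msp(\R^4),\metric)$: a subset of $\Csp(\Rtwo,\Msp(\R^4))$ is relatively compact iff it is equicontinuous on compact subsets of $\Rtwo$ and, for each $(s,t)\in\Rtwo$, the set $\{\mu_{s,t}:\mu\in\mathcal{K}_\delta\}$ is vaguely relatively compact in $\Msp(\R^4)$. For the pointwise condition I would fix $(s,t)$ and $R<\infty$ and invoke \eqref{e.metricS.bdd} to obtain nonnegative $g,g'\in\metricS$ with $\inf_{|x|<R}g(x),\ \inf_{|x|<R}g'(x)\geq c_R>0$, so that $\ind_{B_R\times B_R}\leq c_R^{-2}\,g\otimes g'$ with $B_R:=\{x\in\R^2:|x|<R\}$; since $(g,g')=(g_m,g'_m)$ for some $m$ and $\Pi_m(\mu)\in K_m$ for $\mu\in\mathcal{K}_\delta$, while compact subsets of $\Csp(\Rtwo,\R)$ are uniformly bounded on compacts, $\mu_{s,t}(g\otimes g')$, hence $\mu_{s,t}(B_R\times B_R)$, is bounded uniformly over $\mu\in\mathcal{K}_\delta$ for every $R$; the Banach--Alaoglu criterion for the vague topology (as used after \eqref{e.metricS}) then gives the claimed relative compactness. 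For equicontinuity on a compact $L\subset\Rtwo$ I would truncate the double series in \eqref{e.metric} at a large index $N$ so that the tail $\sum_{i+j>N}2^{-i-j}$ is as small as desired, and for each of the finitely many remaining pairs $(i,j)$ use that $\{\Pi_m(\mu):\mu\in\mathcal{K}_\delta\}\subset K_m$ (with $m$ the index of $(g_{\metricS,i},g_{\metricS,j})$) is equicontinuous on $L$; choosing $\eta>0$ below the finitely many resulting moduli makes $\metric(\mu_{s,t},\mu_{s',t'})$ uniformly small over $\mu\in\mathcal{K}_\delta$ whenever $(s,t),(s',t')\in L$ with $|(s,t)-(s',t')|<\eta$. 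Arzel\`a--Ascoli then yields compactness of $\mathcal{K}_\delta$, completing the verification.

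I do not expect a genuine obstacle here — this is the standard Mitoma-type reduction — but the step demanding the most care is precisely this passage from the countably many scalar projections back to compactness in $\Csp(\Rtwo,\Msp(\R^4))$: the exponential weights $2^{-i-j}$ in \eqref{e.metric} are what permit truncating the defining series in the equicontinuity step, while property \eqref{e.metricS.bdd} of $\metricS$ is what converts control of finitely many test integrals $\mu_{s,t}(g\otimes g')$ into uniform mass bounds on balls and hence vague precompactness of the one-point images. Property \eqref{e.metricS.approx} plays no role in this lemma.
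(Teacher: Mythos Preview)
Your proof is correct and follows the same overall construction as the paper: define the candidate compact set as $\bigcap_m \Pi_m^{-1}(K_m)$ and apply a union bound. The difference lies in how compactness of this set is verified. You invoke Arzel\`a--Ascoli, which requires separately checking pointwise vague precompactness (where you use \eqref{e.metricS.bdd} to dominate indicators of balls) and equicontinuity (where you truncate the series defining $\metric$). The paper instead argues directly from the metric: given any sequence in the set, a diagonal argument extracts a subsequence along which every $\Pi_{ij}\mu_k$ converges in $\Csp(\Rtwo,\R)$, and then the explicit form of $\metric$ and $\metricc$ in \eqref{e.metric}--\eqref{e.metricc} shows the subsequence is Cauchy, hence convergent by completeness. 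The paper's route is shorter and notably does \emph{not} use \eqref{e.metricS.bdd} at all---since $\metric$ is built directly from the test functions $g_{\metricS,i}\otimes g_{\metricS,j}$, Cauchy-ness follows immediately from convergence of the projections, bypassing any need to control masses on balls. Your Arzel\`a--Ascoli argument is more structured and perhaps more transparent about where each hypothesis enters, at the cost of invoking one more property of $\metricS$ than is strictly necessary.
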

\noindent{}%
Given Lemma~\ref{l.motima}, it suffices to prove that, for every $g,g'\in\metricS$, $\{Z^{\theta,\e} g\otimes g'\}_{\e}$ is tight in $\Csp(\Rtwo,\R)$.
This, by the multi-dimensional version of the Kolmogorov--Chentsov theorem (see \cite[Theorem~1.4.1]{kunita1990stochastic} for example), follows from the moment estimate below.
\begin{lem}\label{l.conti}
For every $2n\in 2\Z_{>0}$, $p\in(2,\infty)$, compact interval $I\subset\R$, and $g,g'\in\metricS$, there exists $c=c(n,\phi,p,I,g,g')$ such that, for all $[s,t],[s',t']\subset I$,
\begin{align}
	\big( \E\big| \big(Z^{\theta,\e}_{s,t}-Z^{\theta,\e}_{s',t'}\big) g\otimes g' \big|^{2n} \big)^{1/2n}
	\leq
	c\, (|t-t'|^{\frac{1}{4}(1-\frac{2}{p})}+|s-s'|^{\frac{1}{4}(1-\frac{2}{p})})\ .
\end{align}
\end{lem}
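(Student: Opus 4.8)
The plan is to control separately the two one‑sided increments and then combine them. That is, I would first establish
\[
\Norm{\big(Z^{\theta,\e}_{s,t}-Z^{\theta,\e}_{s,t'}\big)g\otimes g'}_{L^{2n}}
\le c\,|t-t'|^{\frac14(1-\frac2p)}
\quad\text{whenever } s\le t\wedge t'\ ,
\]
and the analogous bound for $\Norm{(Z^{\theta,\e}_{s,t}-Z^{\theta,\e}_{s',t})g\otimes g'}_{L^{2n}}$ whenever $s\vee s'\le t$, and then deduce the lemma by Minkowski's inequality in $L^{2n}$ through the intermediate pair $(s,t')$ or $(s',t)$ --- at least one of which is admissible, since $[s,t],[s',t']\subset I$ are intervals (if both $s>t'$ and $s'>t$ then $s>t'\ge s'>t\ge s$, a contradiction). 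I will describe the right‑endpoint increment; the left‑endpoint one is the same with the two tensor slots of $g\otimes g'$ interchanged. Here $c$ denotes a constant depending only on $n,\phi,p,a,g,g'$ and $|I|$, and the estimate holds for $\e$ below a threshold of the form $1/c$ (needed for \eqref{e.sg.e.bd}), which suffices for the tightness application.

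Assume $s\le t'\le t$ (otherwise swap $t,t'$). By the Chapman--Kolmogorov identity \eqref{e.Ze.ck}, $Z^{\theta,\e}_{s,t}=Z^{\theta,\e}_{s,t'}Z^{\theta,\e}_{t',t}$, so testing against $g\otimes g'$ gives
\[
\big(Z^{\theta,\e}_{s,t}-Z^{\theta,\e}_{s,t'}\big)\,g\otimes g'
=
Z^{\theta,\e}_{s,t'}\,g\otimes(\psi-g')\ ,
\qquad
\psi(y):=\int_{\R^2}\Zfn^{\theta,\e}_{t',t}(y,x')\,g'(x')\,\d x'\ .
\]
Splitting the kernel as $\Zfn^{\theta,\e}_{t',t}=\hk(t-t')+\Wfn^{\theta,\e}_{t',t}$ gives $\psi-g'=\rho+\psi_W$ with $\rho:=\hk(t-t')\star g'-g'$ deterministic and $\psi_W(y):=\int_{\R^2}\Wfn^{\theta,\e}_{t',t}(y,x')g'(x')\,\d x'$ measurable with respect to $\xi$ restricted to $[t',t]\times\R^2$, hence independent of $Z^{\theta,\e}_{s,t'}$. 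By Minkowski's inequality it suffices to bound the $L^{2n}$ norms of $Z^{\theta,\e}_{s,t'}g\otimes\psi_W$ and of $Z^{\theta,\e}_{s,t'}g\otimes\rho$ separately.

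For the random piece I would expand the $2n$‑th power, pass the expectation through the spatial integral, factor it across the two independent time‑slabs of $\xi$, and apply the moment formulas \eqref{e.mome.e} --- the first for the $Z^{\theta,\e}_{s,t'}$ factors, the second for the $\Wfn^{\theta,\e}_{t',t}$ factors --- to get
\[
\E\big[\big(Z^{\theta,\e}_{s,t'}\,g\otimes\psi_W\big)^{2n}\big]
=
\IP{g^{\otimes 2n},\ \sg^{\intv{2n},\e}(t'-s)\,\sgg^{\intv{2n},\e}(t-t')\,g'{}^{\otimes 2n}}\ .
\]
Since $g'\in\Ccsp^2(\R^2)$, the tensor power $g'{}^{\otimes 2n}$ lies in $\Lsp^{p,a}(\R^{2\intv{2n}})$ for the fixed $a\in(0,\infty)$, so Lemma~\ref{l.key.e} in the $p,a\to 2$ direction gives $\norm{\sgg^{\intv{2n},\e}(t-t')\,g'{}^{\otimes 2n}}_{2}\le c\,e^{c(t-t')}(t-t')^{\frac n2(1-\frac2p)}$; then \eqref{e.sg.e.bd} gives $\norm{\sg^{\intv{2n},\e}(t'-s)}_{2\to2}\le c\,e^{c(t'-s)}$, and pairing with $g^{\otimes 2n}\in\Lsp^2$ while bounding $e^{c(t-s)}\le e^{c|I|}$ yields $\E[(Z^{\theta,\e}_{s,t'}g\otimes\psi_W)^{2n}]\le c\,(t-t')^{\frac n2(1-\frac2p)}$, whose $2n$‑th root is $c\,(t-t')^{\frac14(1-\frac2p)}$. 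For the deterministic piece, $\E[(Z^{\theta,\e}_{s,t'}g\otimes\rho)^{2n}]=\ip{g^{\otimes 2n},\sg^{\intv{2n},\e}(t'-s)\,\rho^{\otimes 2n}}\le c\,\norm{\rho}_2^{2n}$ by \eqref{e.sg.e.bd}, and since $g'\in\Ccsp^2(\R^2)$ the elementary estimate $\norm{\hk(t-t')\star g'-g'}_2\le c\,(t-t')$ holds (on the Fourier side, $|e^{-(t-t')|\zeta|^2/2}-1|\le\frac12(t-t')|\zeta|^2$), so this piece contributes $c\,(t-t')\le c\,(t-t')^{\frac14(1-\frac2p)}$, using $\frac14(1-\frac2p)<1$ and $t-t'\le|I|$. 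Adding the two bounds gives the right‑endpoint estimate; the left‑endpoint estimate is identical, now using $Z^{\theta,\e}_{s',t}=Z^{\theta,\e}_{s',s}Z^{\theta,\e}_{s,t}$, writing the increment as $Z^{\theta,\e}_{s,t}(g-\widetilde g)\otimes g'$ with $\widetilde g$ obtained by testing $\Zfn^{\theta,\e}_{s',s}$ against $g$, and applying Lemma~\ref{l.key.e} to $\sgg^{\intv{2n},\e}(s-s')g^{\otimes 2n}$.

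I expect the main obstacle to be bookkeeping rather than estimation: after each Chapman--Kolmogorov split one must track which tensor slot each test function occupies, and one must justify passing the expectation through the $2n$‑fold spatial integral and factoring it across the two independent noise slabs before invoking \eqref{e.mome.e} --- this is legitimate because $\Zfn^{\theta,\e}\ge0$ and $|\Wfn^{\theta,\e}|\le\Zfn^{\theta,\e}+\hk$, which reduces the required absolute integrability to a finite sum of pairings already controlled by \eqref{e.sg.e.bd}. The only genuinely analytic inputs are Lemma~\ref{l.key.e} used in the $p,a\to2$ direction --- which is precisely why a strictly positive weight $a$ is needed, the unweighted $\Lsp^2\to\Lsp^2$ norm of $\sgg^{\intv{2n},\e}(t)$ carrying no decaying power of $t$ --- the uniform bound \eqref{e.sg.e.bd}, and the elementary heat‑regularization estimate above.
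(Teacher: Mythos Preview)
Your proposal is correct and follows essentially the same route as the paper: reduce to one-sided endpoint increments, apply the Chapman--Kolmogorov identity \eqref{e.Ze.ck}, decompose the short-interval factor as $\hk+\Wfn^{\e}$, bound the heat piece via $g,g'\in\Ccsp^2$ and \eqref{e.sg.e.bd}, and bound the $\Wfn^{\e}$ piece via Lemma~\ref{l.key.e} (with $a>0$) composed with \eqref{e.sg.e.bd}. The only cosmetic difference is the reduction step: the paper introduces $\unds=s\wedge s'$, $\bars=s\vee s'$, $\undt=t\wedge t'$, $\bart=t\vee t'$ and bounds by four one-sided terms, whereas you pass through a single admissible intermediate pair $(s,t')$ or $(s',t)$; your admissibility check is a clean way to ensure all intermediate $Z^{\theta,\e}$ live on $\Rtwo$.
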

\begin{proof}
Put $c=c(n,\phi,I,p,g,g')$, put $\unds :=s\wedge s'$, $\bars :=s\vee s'$, do similarly for $\undt$ and $\bart$, and write $\norm{\cdot}_{2n,\P}:=\E[(\cdot)^{2n}]^{1/2n}$.
Adding and subtracting terms and using the triangle inequality give
\begin{align}
	\Norm{ 
		&\big( Z^{\theta,\e}_{s,t}-Z^{\theta,\e}_{s',t'}\big)g\otimes g'  
	}_{2n,\P}
\\
	\label{e.l.conti.1}
	&\leq
	\sum_{u=\undt,\bart} 
	\Norm{ 
		\big( Z^{\theta,\e}_{\unds,u}-Z^{\theta,\e}_{\bars,u} \big) g\otimes g'  
	}_{2n,\P}	
	+
	\sum_{u=\unds,\bars} 
	\Norm{ \big( Z^{\theta,\e}_{u,\bart}-Z^{\theta,\e}_{u,\undt}\big)g\otimes g' }_{2n,\P}\ .
\end{align}
Consider the first sum in \eqref{e.l.conti.1}.
Use \eqref{e.Ze.ck} to write $Z^{\theta,\e}_{\unds,u}=Z^{\theta,\e}_{\unds,\bars} Z^{\theta,\e}_{\bars,u}$, decompose $Z^{\theta,\e}_{\unds,\bars}$ into $\heatm_{\unds,\bars}+W^{\e}_{\unds,\bars}$, bound the summand by
$
	\norm{ 
		( Z^{\theta,\e}_{\bars,u}- \heatm_{\unds,\bars}Z^{\theta,\e}_{\bars,u} )   g\otimes g'  
	}_{2n,\P}
	+
	\norm{ 
		W^\e_{\unds,\bars} Z^{\theta,\e}_{\bars,u} g\otimes g'  
	}_{2n,\P}		
$,
and use \eqref{e.mome.e} and $\bars-\unds=|s-s'|$ to evaluate these expectations.
Doing so gives
\begin{align}
	\label{e.l.conti.2}
	\Norm{ 
		\big( Z^{\theta,\e}_{\unds,u}-Z^{\theta,\e}_{\bars,u} \big) g\otimes g'  
	}_{2n,\P}	
	\leq	
	&\Ip{ 
		g^{\otimes 2n}, 
		\big(\ind-e^{\frac12|s-s'|\Delta}\big)^{\otimes 2n}\,
		\sg^{\intv{2n},\e}(u-\bars)\,
		g'{}^{\otimes 2n}
	}^{1/2n}
\\
	\label{e.l.conti.3}
	&+
	\Ip{ 
		g^{\otimes 2n}, 
		\sgg^{\intv{2n},\e}(|s-s'|)\,
		\sg^{\intv{2n},\e}(u-\bars)\,
		g'{}^{\otimes 2n}
	}^{1/2n}\ ,
\end{align}
where $\Delta$ denotes the Laplacian on $\R^2$.
By the Cauchy--Schwarz inequality, the right-hand side of \eqref{e.l.conti.2} is bounded by $\norm{g-\exp(\frac{1}{2}|s-s'|\Delta)g}_2\cdot\norm{\sg^{\intv{2n},\e}(u-\bars)g'{}^{\otimes 2n}}_2^{1/2n}$.
The second factor is bounded by $c$ thanks to \eqref{e.sg.e.bd}, while the first factor is bounded by $c\,|s-s'|$ because $g\in\Ccsp^2(\R^2)$ (recall that $\metricS\subset\Ccsp^2(\R^2)$).
Further, since $I$ is bounded, $|s-s'|\leq c\, |s-s'|^{\frac{1}{4}(1-\frac{2}{p})}$.
The term in \eqref{e.l.conti.3} is bounded by $c\, |s-s'|^{\frac{1}{4}(1-\frac{2}{p})}$ thanks to Lemma~\ref{l.key.e} for $a=1$ and \eqref{e.sg.e.bd}.
Applying the same argument to the second sum in \eqref{e.l.conti.1} completes the proof.
\end{proof}

\appendix
\section{}

\begin{proof}[Proof of \eqref{e.tech}]
The bound \eqref{e.heatcontract} follows from Young's convolution inequality.
The bound \eqref{e.L2<Lpa} follows by writing $|f|^2=(|f|e^{a|x|_1})^2\cdot e^{-2a|x|_1}$ and applying H\"{o}lder's inequality with exponents $p/2$ and $p/(p-2)$.
The bound \eqref{e.1/p.1} follows by applying H\"{o}lder's inequality with exponents $q$ and $p$.
Next, the equalities in \eqref{e.1/p.3}--\eqref{e.1/p.2} follows by definition; see \eqref{e.incoming} and \eqref{e.operatornorm}.
To prove the inequalities, put $\alpha=ij$ and write $\ip{g, \heatsg^{\omega}_{\alpha}(t) f}$ as
\begin{align}
	\label{e.incoming.rc}
	\int_{\R^2} \d \yc \cdot
	\prod_{k=i,j} \int_{\R^2} \d x_k\,\hk(t,\yc-x_k)
	\cdot
	\prod_{k\in\omega\setminus\alpha} \int_{\R^4} \d y_k\d x_k\,\hk(t,y_k-x_k) \cdot
	g(y)\,f(x)\ .
\end{align}
Using \eqref{e.1/p.1} in the $x_i$ integral gives
\begin{align}
	\label{e.incoming.rc.}
	\big| \ip{g, \heatsg^{\omega}_{\alpha}(t) f} \big|
	\leq
	c(p)\, \ip{ |g|, \heatsg^{\omega\setminus\{i\}}(t) f_1 }\ ,
\end{align}
where $f_1(x'):=(\int_{\R^2}\d x_i |f(x)|^{p})^{1/p}$ and $x'\in\R^{2\omega\setminus\{i\}}$.
Using \eqref{e.heatcontract} in \eqref{e.incoming.rc.} followed by using $\norm{f_1}_p=\norm{f}_p$ gives \eqref{e.1/p.3}.
Using \eqref{e.heatcontract} for $p=2$ in \eqref{e.incoming.rc.} followed by using $\norm{f_1}_{2} \leq c(\omega,p,a) \norm{f_1}_{p,a} \leq c(\omega,p,a) \norm{f}_{p,a}$ gives \eqref{e.1/p.2}.
\end{proof}

\begin{proof}[Proof of \eqref{e.bds}]
To simplify notation, write the operator norms as $\norm{\cdot}_{p,a\to p, a}=\norm{\cdot}_{p,a}$ and $\norm{\cdot}_{p,0\to p, 0}=\norm{\cdot}_{p}$ and 
write $c=c(|\omega|,p,a)=c(\theta,|\omega|,p,a)$.

We begin by using the comparison argument in \cite{surendranath2024two} to reduce the problem to the $a=0$ case.
Consider the multiplicative operator $\weiop:\Lsp^p(\R^{2})\to \Lsp^{p,a}(\R^{2})$, $\psi\mapsto e^{a|\cdot|_1}\psi$ and express the weights in the norm as conjugations:
\begin{subequations}
\label{e.pa.p}
\begin{align}
	\label{e.pa.p.incoming}
	\Norm{ \heatsg^{\omega}_{\alpha}(t) }_{p,a}
	&=
	\Norm{ \weiop^{2}\otimes\weiop^{\otimes\omega\setminus\alpha} \cdot \heatsg^{\omega}_{\alpha}(t) \cdot \weiop^{-\otimes\omega} }_{p}\ ,
\\
	\Norm{ \heatsg^{\omega}_{\alpha}(t)^* }_{p,a}
	&=
	\Norm{ \weiop^{\otimes\omega} \cdot \heatsg^{\omega}_{\alpha}(t)^* \cdot \weiop^{-2}\otimes\weiop^{-\otimes\omega\setminus\alpha} }_{p}\ ,
\\
	\Norm{ \heatsg^{\omega}_{\alpha\alpha'}(t) }_{p,a}
	&=
	\NOrm{ 
		\weiop^{2}\otimes\weiop^{\otimes\omega\setminus\alpha}\cdot \heatsg^{\omega}_{\alpha\alpha'}(t) 
		\cdot 
		\weiop^{-2}\otimes\weiop^{-\otimes\omega\setminus\alpha'} 
	}_{p}\ ,
\\
	\NOrm{ \int_0^\infty \d t\, e^{-\gamma t} \heatsg^{\omega}_{\alpha\alpha'}(t) }_{p,a}
	&=
	\NOrm{ 
		\weiop^{2}\otimes\weiop^{\otimes\omega\setminus\alpha}\cdot 
		\int_0^\infty \d t\, e^{-\gamma t} \heatsg^{\omega}_{\alpha\alpha'}(t)\cdot 
		\weiop^{-2}\otimes\weiop^{-\otimes\omega\setminus\alpha'} 
	}_{p}\ ,
\\
	\Norm{ \Jop^{\omega}_{\alpha}(t) }_{p,a}
	&=
	\Norm{ \weiop^{2}\otimes\weiop^{\otimes\omega\setminus\alpha} \cdot \Jop^{\omega}_{\alpha}(t) \cdot \weiop^{-2}\otimes\weiop^{-\otimes\omega\setminus\alpha} }_{p}\ .
\end{align}
\end{subequations}
To tackle the conjugations in \eqref{e.pa.p}, we use the following bound, which is not hard to check and is proven in \cite[Equation~(2.15)]{surendranath2024two}:
\begin{align}
	\label{e.hk.comp}
	\hk(t,x-x')e^{a|x|_1-a|x'|_1}\leq 2\,e^{2at}\,\hk(2t,x-x')\ ,
	\quad
	x,x'\in\R^2\ .
\end{align}
The conjugated operator in \eqref{e.pa.p.incoming} has kernel $e^{a|S_\alpha y|_1-a|x|_1}\heatsg^{\omega}(t,S_\alpha y-x)$.
Using \eqref{e.hk.comp} and the fact that the kernel is nonnegative, we bound the right-hand side of \eqref{e.pa.p.incoming} by $ce^{ct}\norm{\heatsg^{\omega}_{\alpha}(2t)}_{p}$.
The same argument applies to other operators and gives, after renaming $t$ to $t/2$,
\begin{subequations}
\label{e.bds.reduced}
\begin{align}
	\label{e.bd.incoming.r}
	\Norm{ \heatsg^{\omega}_{\alpha}(t/2) }_{p,a}
	&\leq
	c\,e^{ct}\,
	\Norm{ \heatsg^{\omega}_{\alpha}(t)  }_{p}\ ,
\\
	\label{e.bd.incoming*.r}
	\Norm{ \heatsg^{\omega}_{\alpha}(t/2)^* }_{p,a}
	&\leq
	c\,e^{ct}\,
	\Norm{ \heatsg^{\omega}_{\alpha}(t)^* }_{p}\ ,
\\
	\label{e.bd.swapping.r}
	\Norm{ \heatsg^{\omega}_{\alpha\alpha'}(t/2) }_{p,a}
	&\leq
	c\, e^{ct}\,
	\Norm{ \heatsg^{\omega}_{\alpha\alpha'}(t) }_{p}\ ,
\\
	\label{e.bd.swapping.int.r}
	\NOrm{ \int_0^\infty \d t\, e^{-c t} \heatsg^{\omega}_{\alpha\alpha'}(t/2) }_{p,a}
	&\leq
	c\, \NOrm{ \int_0^\infty \d t\, \heatsg^{\omega}_{\alpha\alpha'}(t) }_{p}\ ,
\\
	\label{e.bd.Jop.int.r}
	\Norm{ \Jop^{\omega}_{\alpha}(t/2) }_{p,a}
	&\leq
	c e^{ct}\, \jfn(t/2) \, \norm{\hk(t/2)}_{p\to p} \cdot (\norm{ \hk(t) }_{p\to p})^{n-2}\ .
\end{align}
\end{subequations}

We now complete the proof based on \eqref{e.bds.reduced}.
First, applying \eqref{e.1/p.3} to \eqref{e.bd.incoming.r}--\eqref{e.bd.incoming*.r} gives \eqref{e.bd.incoming}.
Turning to \eqref{e.bd.swapping.r}, we write $\heatsg^{\omega}_{\alpha\alpha'}(t)=\heatsg^{\alpha\cup\alpha'}_{\alpha\alpha'}(t)\otimes \heatsg^{\omega\setminus\alpha}(t)$ and use \eqref{e.heatcontract} the bound $\norm{\heatsg^{\omega\setminus\alpha}(t)}_p$ by $1$.
Doing so shows that the right-hand side of \eqref{e.bd.swapping.r} is bounded by $c\,e^{ct}\norm{\heatsg^{\alpha\cup\alpha'}_{\alpha\alpha'}(t)}_{p}$.
Without loss of generality, consider the cases $(\alpha,\alpha')=(12,23)$ and $(\alpha,\alpha')=(12,34)$.
For the first case, write $\ip{|f|,\heatsg^{123}_{12\ 23}(t) |g|}$ explicitly as
\begin{align}
	\label{e.pf.bd.swapping.1}
	&\int_{\R^8} \d \yc \d y_3 \d \yc' \d y'_1\,
	|f(\yc,y_3)|\,
	\hk(t, y_3-\yc')\,\hk(t,\yc-\yc')\,\hk(t,\yc-y'_1)\, 
	|g(\yc',y'_1)|\ .
\end{align}
Apply \eqref{e.1/p.1} with $p\mapsto q$ to the $y_3$ integral and apply \eqref{e.1/p.1} with $p$ to the $y'_1$ integral.
Doing so gives the bound
$
	\ip{|f|,\heatsg^{123}_{12\ 23}(t) |g|}
	\leq
	c\, t^{-1} \, \ip{f_1, \hk(t) g_1}
$,
where $f_1(\yc):=\norm{f(\yc,\cdot)}_{q}$ and $g_1(\yc'):=\norm{g(\yc',\cdot)}_{p}$.
Using \eqref{e.heatcontract} for $\heatsg^{\intv{1}}(t)=\hk(t)$ gives $\ip{f_1 \hk(t) g_1}\leq \norm{f_1}_q\,\norm{g_1}_p = \norm{f}_q\,\norm{g}_p$.
This proves $\norm{\heatsg^{123}_{12\ 23}(t)}_{p}\leq c\,t^{-1}$.
For the second case, writing $\heatsg^{1234}_{12\ 34}(t)=\heatsg^{12}_{12}(t)^*\otimes \heatsg^{34}_{34}(t)$ and using \eqref{e.1/p.3} give $\norm{\heatsg^{1234}_{12\ 34}(t)}_{p}\leq c\,t^{-1/q-1/p}=c\,t^{-1}$.
These results prove \eqref{e.bd.swapping}.
Moving on to \eqref{e.bd.swapping.int.r}, we use $\int_0^\infty\d t\,\heatsg^{\omega}(t,x)=c/|x|^{2|\omega|-2}$ to write
\begin{align}
	\label{e.hardy}
	\int_0^\infty \d t\, \Ip{ |f|, \heatsg^\omega_{\alpha\alpha'}(t) \, |g| }
	=
	\int_{\R^{2}\times\R^{2\omega\setminus\alpha}} \d y 
	\int_{\R^{2}\times\R^{2\omega\setminus\alpha'}}\d y' 
	\frac{c\, |f(y)|\, |g(y')|}{|S_{\alpha} y - S_{\alpha'}y'|^{2|\omega|-2}}\ .
\end{align}
A discrete version of the right-hand side of \eqref{e.hardy} was considered in \cite[Lemma 6.8]{caravenna2023critical} with $h=|\omega|$ and $|I|=|J|=|\omega|-1$.
The proof applies also to the right-hand side of \eqref{e.hardy} and bound it by $c\, \norm{f}_{q}\,\norm{g}_{p}$.
This proves \eqref{e.bd.Jop}.
Finally, applying \cite[Lemma~8.4]{gu2021moments} and \eqref{e.heatcontract} to \eqref{e.bd.Jop.int.r} gives \eqref{e.bd.Jop}.
\end{proof}

\begin{proof}[Proof of Lemma~\ref{l.sum}]
The proof of Parts~\ref{l.sum.1} and \ref{l.sum.2} are similar, so we consider only the former.
Write $c$ for universal constants.
Recall the definition of $\Top'_{\kappa}(\tau)$ from Lemma~\ref{l.sum} and bound
\begin{align}
	\label{e.l.sum.0}
	\NOrmop{ 
		\int_{\Sigma(t)} \d \vectau \, \prod_{k=1}^{2m+1} \Top'_{k/2}(\tau_{k/2}) 
	}
	\leq
	\sum_{I\subset \{1,\ldots,m\}}
	c_1^{|I|}
	A_{I}\ ,
	\qquad
	A_{I}
	:=
	\NOrmop{
		\int_{\Sigma(t)} \d \vectau \, \prod_{\kappa\in I^\comple} \Top_{\kappa}(\tau_{\kappa})
	}\ ,
\end{align}
where $I^\comple:=(\frac{1}{2}\Z)\cap(0,m+1)\setminus I$.
Recall that $\Sigma(t):=\{\sum_{\kappa\in I^\comple} \tau_{\kappa}=t\}$, so at least one $\tau_{\kappa}$ is $\geq t/(2m+1-|I|) \geq t/(2m+1)$.
Within the integral, multiply and divide by $e^{-\lambda t}$ and use the just mentioned property to bound
\begin{align}
	\label{e.l.sum.1}
	A_I
	\leq
	e^{\lambda t}
	\sum_{\kappa\in I^\comple}
	\sup_{\tau\in[\frac{t}{2m+1},t]} e^{-\lambda \tau} \normop{ \Top_{\kappa}(\tau) } 
	\cdot
	\prod_{\kappa'\in I^\comple\setminus\{\kappa\}} \NOrmop{ \int_0^t \d s\, e^{-\lambda s} \Top_{\kappa'}(s) }\ .
\end{align}
Hereafter, we assume $\lambda \geq c_0+2$.
By \eqref{e.l.sum.bd},
\begin{align}
	\label{e.l.sum.2}
	\sup_{\tau\in[\frac{t}{2m+1},t]}
	e^{-\lambda \tau} \normop{ \Top_{\kappa}(\tau) } 
	&\leq
	c_0\, (2m+1)
	\begin{cases}
		t^{b} & \text{when } \kappa = \tfrac{1}{2} \ , 
	\\
		c\, t^{-1} & \text{when } \kappa \in (\frac{1}{2}\Z)\cap [1,m] \ ,
	\\
		t^{b'} & \text{when } \kappa = m+ \tfrac{1}{2} \ .
	\end{cases}
\\
	\label{e.l.sum.3}
	\int_0^t \d s\, e^{-\lambda s} \Normop{ \Top_{\kappa'}(s) }
	&\leq
	c_0
	\begin{cases}
		\frac{t^{b+1}}{b+1} & \text{when } \kappa' = \tfrac{1}{2} \ , 
	\\
		\frac{c}{\log(\lambda-c_0-1)} & \text{when } \kappa' \in \Z\cap [1,m] \ .
	\\
		\frac{t^{b'+1}}{b'+1} & \text{when } \kappa' = m+ \tfrac{1}{2} \ .
	\end{cases}	
\end{align}
Insert \eqref{e.l.sum.2}--\eqref{e.l.sum.3} and \eqref{e.l.sum.bdint} into \eqref{e.l.sum.1}, bound the size of $\sum_{\kappa\in I^\comple}$ by $2m+1$, and insert the result into \eqref{e.l.sum.0}.
We hence bound the left-hand side of \eqref{e.l.sum.0} by
\begin{align}
	\frac{c^{2m+1}\, m^2 \, t^{1+b+b'}\, e^{\lambda t}}{(b+1)\wedge (b'+1)\wedge 1}
	\sum_{I\subset \{1,\ldots,m\}}
	c_1^{|I|} c_0^{2m+1-|I|}
	\Big( \frac{1}{\log(\lambda-c_0-1)} \Big)^{(m-|I|-1)\vee 0}\ .
\end{align}
Bounding the last sum by $c_0^{m+1}\,(c_1^m+m c_0^2\,(c_1+c_0/\log(\lambda-c_0-1))^{m-1})$ completes the proof.
\end{proof}

\begin{proof}[Proof of \eqref{e.sgg}]
Let $\dgm'(\omega):=\dgm(\omega)\cup\{\emptyset\}$ and $\sgsum^{\omega}_{\emptyset}(t):=\heatsg^{\omega}(t)$ so that \eqref{e.sg}  can be written as $\sg^{\omega}(t)=\sum_{\vecalpha\in\dgm'(\omega)}\sgsum^{\omega}_{\vecalpha}(t)$.
Take any $\vecalpha\in\dgm'(\omega)$, and let $\sigma:=\alpha_1\cup\cdots\cup\alpha_{|\alpha|}$ so that $\vecalpha\in\dgm_*(\sigma)$; see \eqref{e.sgg}.
Using the heat semigroup property mentioned after \eqref{e.key.simplifying}, we rewrite the summand as
$
	\sgsum^{\omega}_{\vecalpha}(t) = \sgsum^{\sigma}_{\vecalpha}(t) \otimes \heatsg^{\omega\setminus\sigma}(t)
$ 
and the sum as
$
	\sg^{\omega}(t)
	=
	\sum_{\sigma\subset\omega}
	\sum_{\vecalpha\in\dgm_*(\sigma)} \sgsum^{\sigma}_{\vecalpha}(t) \otimes \heatsg^{\omega\setminus\sigma}(t)
$.
Inserting this expression into \eqref{e.sgg.} gives
\begin{align}
	\sgg^{\intv{n}}(t)
	=
	\sum_{\sigma\subset\intv{n}}
	\sum_{\vecalpha\in\dgm_*(\sigma)} \sgsum^{\sigma}_{\vecalpha}(t) \otimes \heatsg^{\intv{n}\setminus\sigma}(t)
	\sum_{\omega: \sigma\subset\omega\subset\intv{n}} (-1)^{n-|\omega|}\ .
\end{align}
The sum over $\omega$ vanishes unless $\sigma=\intv{n}$, so \eqref{e.sgg} follows.
\end{proof}

\begin{proof}[Proof of Lemma~\ref{l.motima}]
Recall that $\metricS=\{g_{\metricS,1},g_{\metricS,2},\ldots\}$, take any $N<\infty$, and consider the projection $\Pi_{ij}: \Csp(\Rtwo,\Msp(\R^{4}))\to \Csp(\Rtwo,\R)$: $\mu\mapsto \mu g_{\metricS,i}\otimes g_{\metricS,j}$.
For every $i,j\in\Z_{>0}$, since $\{\Pi_{ij}Z^{\theta,\e}\}_{\e\in(0,1]}$ is tight, there exists a compact $K_{ij}\subset\Csp(\Rtwo,\R)$ such that $\P[ \Pi_{ij} Z^{\theta,\e} \notin K_{ij} ]<2^{-i-j-N}$, for all $\e\in(0,1]$.
Set $K:=\cap_{i,j=1}^{\infty} \Pi_{ij}^{-1}(K_{ij})$.
This set is compact.
To see why, take any sequence in $K$, enumerate $\Pi_{ij}$ for $i,j\in\Z_{>0}$, and use the diagonal argument to extract a subsequence $\{\mu_k\}_k$ such that $\Pi_{ij}\mu_k$ converges as $k\to\infty$ for every $i,j$.
Referring to \eqref{e.metricc} and \eqref{e.metric}, we see that $\{\mu_k\}_k$ is Cauchy with respect to $\metricc$, so it converges.
This proves that $K$ is compact.
By the union bound, $\P[Z^{\theta,\e}\notin K]\leq 2^{-N}\sum_{i,j}2^{-i-j} =2^{-N}$, for all $\e\in(0,1]$, so $\{Z^{\theta,\e}\}_{\e\in(0,1]}$ is tight.
\end{proof}

\bibliographystyle{alpha}
\bibliography{shf}

\end{document}